\crefname{hypothesis}{Hypothesis}{Hypotheses}
\title{Globalized distributionally robust optimization based on samples\thanks{April 29th.
\funding{This work is supported by the National Science Foundation of China Grant \#11771243.}}}
\author{Yueyao Li\thanks{Department of Mathematical Sciences, Tsinghua University, Beijing 100084, People's Republic of China (\email{liyueyao18@mails.tsinghua.edu.cn}).}
\and Wenxun Xing\thanks{Department of Mathematical Sciences, Tsinghua University, Beijing 100084, People's Republic of China (\email{wxing@mail.tsinghua.edu.cn}).}
}
\begin{document}

\maketitle
\begin{abstract}
It is known that the set of perturbed data is key in robust optimization modelling. Distributionally robust optimization (DRO) is a methodology used for optimization problems affected by random parameters with uncertain probability distribution. In terms of the information of the perturbed data, it is essential to estimate an appropriate support set of the probability distribution in formulating DRO models. In this paper, we introduce two globalized distributionally robust optimization (GDRO) models which choose a core set based on data and a sample space containing the core set to balance the degree of robustness and conservatism at the same time. The degree of conservatism can be controlled by the expected distance of random parameters from the core set. Under some assumptions, we further reformulate several GDRO models into tractable semi-definite programs. In addition, numerical experiments are provided showing the relationship between the optimal objective values of the GDRO models and the size of the sample space and the core set.
\end{abstract}

\begin{keywords}
distributionally robust optimization, sample space, semi-definite program, computationally solvable
\end{keywords}

\begin{MSCcodes}
90C15, 90C22
\end{MSCcodes}

\section{Introduction}\label{intro}
Distributionally robust optimization (DRO) model is generally formulated as
\begin{equation}
\label{DRO}
\min \limits_{\bm{x} \in X} \max \limits_{P \in \mathcal{D}} E_{P}[h(\bm{x},\bm{\xi})],
\end{equation}
where $\bm{x} \in \mathbb{R}^n$ is the decision vector, $X \subseteq \mathbb{R}^n$ is a closed convex set of feasible solutions, $h(\bm{x},\bm{\xi})$ is a convex function in $\bm{x}$ that depends on the random vector $\bm{\xi} \in \mathbb{R}^p$, $E_P[\cdot]$ is the expectation taken with respect to $\bm{\xi}$ and $\mathcal{D}$ is the ambiguity set. The only information we know about $\bm{\xi}$ is a set of samples of it.

There are wide range of ambiguity sets constructed by different methods, such as matrix moment constraints \cite{matrix}, Wasserstein metric \cite{wasserstein} and polynomial density functions \cite{polynomial}. Here we consider the ambiguity set of matrix moment constraints \cite{ye} as follows,
\begin{equation}\label{ambiguity}
   \mathcal{D}=
   \left\{
   P \in \mathcal{P}
   \middle\arrowvert
   \begin{array}{lcl}
    & (E_{P}[\bm{\xi}]-\bm{\mu}_0)^{T}\bm{\Sigma}_0^{-1}(E_{P}[\bm{\xi}]-\bm{\mu}_0) \leq \gamma_1  \\
    & E_{P}[(\bm{\xi}-\bm{\mu}_0)(\bm{\xi}-\bm{\mu}_0)^{T}] \preceq \gamma_2\bm{\Sigma}_0\\
    & P(\bm{\xi} \in \Xi) = 1
   \end{array}
   \right\},
\end{equation}
where $\mathcal{P}$ is the set of all probability measures on the measurable space $(\Xi,\mathcal{B}(\Xi))$, $\Xi \subseteq \mathbb{R}^p$ is the sample space, $\mathcal{B}(\Xi)$ is the Borel $\sigma$-algebra on $\Xi$, $\bm{\xi}: \Xi \to \mathbb{R}^p$ is a vector of random variables in probability space $(\Xi,\mathcal{B}(\Xi),P)$ satisfying $\bm{\xi}(\bm{\omega})=\bm{\omega}$,
$\bm{\mu}_0$ is the sample mean, $\bm{\Sigma}_0$ is the sample covariance matrix and $\gamma_1$ and $\gamma_2$ are estimated by statistical method to guarantee that the true mean and covariance matrix are contained with high probability. Details of determining $\gamma_1$ and $\gamma_2$ can be seen in Appendix B.

In this paper, we present a new globalized distributionally robust optimization (GDRO) framework, in which a core set based on given samples is fixed and the sample space of probability distribution is globalized. GDRO can be regarded as an extension of the traditional DRO. Here two kinds of GDRO models are provided. The first one called GDRO1 embodying the globalization in the objective function in \cref{DRO} is
\begin{equation}
\label{GDRO1}
\min \limits_{\bm{x} \in X} \max \limits_{P \in \mathcal{D}} E_{P}[h(\bm{x},\bm{\xi})-\min \limits_{\bm{\xi}' \in Y} \phi(\bm{\xi},\bm{\xi}')],
\end{equation}
where the core set $Y$ is a convex compact set containing core samples and $\phi:\mathbb{R}^p \times \mathbb{R}^p \to \mathbb{R}$ is a closed, jointly convex and nonnegative function for which $\phi(\bm{\xi},\bm{\xi})=0$ for all $\bm{\xi} \in \mathbb{R}^p$. The ambiguity set $\mathcal{D}$ remains to be \cref{ambiguity}.

Besides, the set $Y$  and $\Xi$ are defined by
\begin{equation*}
\begin{split}
Y = \{ \bm{\mu}_0 + \bm{A}\bm{\zeta} | \bm{\zeta} \in Z_1 \},  \\
\Xi = \{ \bm{\mu}_0 + \bm{A}\bm{\zeta} | \bm{\zeta} \in Z_2 \},
\end{split}
\end{equation*}
where $Z_1$ is a given nonempty, convex and compact set with $0 \in ri(Z_1)$ and $Z_2$ is also convex and contains $Z_1$.

We obverse that
\begin{equation*}
\begin{split}
& E_{P}[h(\bm{x},\bm{\xi})-\min \limits_{\bm{\xi}' \in Y} \phi(\bm{\xi},\bm{\xi}')] \\
= & \int_{\Xi} (h(\bm{x},\bm{\xi})-\min \limits_{\bm{\xi}' \in Y} \phi(\bm{\xi},\bm{\xi}')) dP \\
= & \int_{Y} h(\bm{x},\bm{\xi}) dP + \int_{\Xi \backslash Y} (h(\bm{x},\bm{\xi})-\min \limits_{\bm{\xi}' \in Y} \phi(\bm{\xi},\bm{\xi}')) dP,
\end{split}
\end{equation*}
where $\min_{\bm{\xi}' \in Y} \phi(\bm{\xi},\bm{\xi}')$ can be regarded as a penalty item.
If $P(\Xi \backslash Y)=0$, i.e. the support set of $P$ is contained in $Y$, the objective function is consistent with that in the classical DRO model \cref{DRO}. Otherwise, the value of the expectation over $\Xi \backslash Y$ is subtracted by $\int_{\Xi \backslash Y} \min_{\bm{\xi}' \in Y} \phi(\bm{\xi},\bm{\xi}') dP$. According to the construction of $Y$, core samples $\bm{\xi}_1,\ldots,\bm{\xi}_N$ are contained in $Y$. Then we can believe that the probability $P(\Xi \backslash Y)$ is extremely small. It is reasonable to reduce the influence of the expectation over $\Xi \backslash Y$. Thus, the possible worst-case expectation is decreased and the degree of conservatism is reduced.

The second model called GDRO2 embodies the globalization in a constraint,
\begin{equation}
\label{GDRO2}
\min \limits_{\bm{x} \in X} \max \limits_{P \in \mathcal{D}_1} E_{P}[h(\bm{x},\bm{\xi})],
\end{equation}
where the ambiguity set is defined as
\begin{equation}
\label{ambiguity2}
   \mathcal{D}_1=
   \left\{
   P \in \mathcal{P}
   \middle\arrowvert
   \begin{array}{lcl}
    & (E_{P}[\bm{\xi}]-\bm{\mu}_0)^{T}\bm{\Sigma}_0^{-1}(E_{P}[\bm{\xi}]-\bm{\mu}_0) \leq \gamma_1  \\
    & E_{P}[(\bm{\xi}-\bm{\mu}_0)(\bm{\xi}-\bm{\mu}_0)^{T}] \preceq \gamma_2\bm{\Sigma}_0 \\
    & E_{P}[\min \limits_{\bm{\xi}' \in Y} \phi (\bm{\xi},\bm{\xi}')] \leq d_0 \\
    & P(\bm{\xi} \in \Xi) = 1
   \end{array}
   \right\}.
\end{equation}
All notations keep the same meanings as above. It is obvious that the performance of the probability distribution on $\Xi \backslash Y$ is limited by the constraint
$$E_{P}[\min \limits_{\bm{\xi}' \in Y} \phi (\bm{\xi},\bm{\xi}')] \leq d_0.$$
It forces that the probability of that $\bm{\xi}$ is farther away from $Y$ is smaller.

The key idea of formulating the above two models is to realize the idea of globalization by introducing the distance function $\phi$. Besides, GDRO models are able to control the degree of conservatism by adjusting $\phi$. The term controlling the degree of conservatism $\min_{\bm{\xi}' \in Y} \phi (\bm{\xi},\bm{\xi}')$ appears in the objective function in GDRO1, while it is transferred to a constraint in the ambiguity set in GDRO2. Therefore, GDRO2 has one more parameter $d_0$ than GDRO1 to control conservatism.

The idea of globalization comes from the globalized robust optimization (GRO) introduced by Ben-Tal et al. \cite{globalized_ro}. Their motivation is to find a way to balance conservatism and robustness in robust optimization. Since the robust optimization was proposed, there were some articles \cite{globalized_ro, price} that aimed to discuss the trade-off between conservatism and robustness. The robustness requires constraints to be feasible for all parameter values in the uncertainty set. The resulting optimal solutions are over-conservative in the sense that decision makers give up too much optimality of the nominal problem as \cite{price} said. In extreme cases, there may be no feasible solution. In order to decrease the degree of conservatism and guarantee the existence of feasible solutions, we have to shrink the uncertainty set, resulting in the loss of robustness at the same time.

To deal with this, the GRO approach uses two uncertainty sets: an inner uncertainty set and an outer uncertainty set. Constraints in the GRO model are forced to be feasible for all parameter values in the inner uncertainty set but allowed to be infeasible for parameter values in the outer uncertainty set, where the violation is controlled by the distance of the parameter value and the inner uncertainty set. Compared with the RO approach, the GRO approach allows decision makers to consider the infection of more values of uncertain parameters even though the parameter values in the outer uncertainty set are less likely to occur in the real life. At the same time, the rise of robustness does not lead to much higher degree of conservatism.

The first published study of DRO was introduced by Scarf \cite{scarf}, but had not attracted much attention until robust optimization was popular. DRO handles the uncertainty by using some information like mean and variance of the samples under the assumption of not knowing the exact probability distribution. However, it is not so easy to estimate the support set of the probability distribution based on given samples. Generally, a set $\Phi$ known to contain the support set is used in their ambiguity sets, which is assumed to be a convex compact set, such as in \cite{ye,note_dro}. Under the assumption that $\Phi$ is bounded, the ambiguity set cannot contain the true probability distribution if its support set is the whole space $\mathbb{R}^p$. On the other side, it may lead to over-conservatism if $\Phi$ is directly taken as $\mathbb{R}^p$. In consideration of the similarity of the uncertainty set in RO and the support set in DRO, it is natural to introduce the idea of globalization to the DRO problem. That is why we propose such globalized distributionally robust optimization models.

In consideration of $\bm{\xi}(\bm{\omega})=\bm{\omega}$ for all $\bm{\omega} \in \Xi$, the support set is contained in the sample space $\Xi$. Besides, there is no problem to let the sample space to be the same as the support set if the latter is known. Hence, we choose to use the term ``sample space'' which is more rigorous in expression, instead of ``a set known to contain the support set''.

In GDRO models, we set a core set $Y$ containing core samples. In application, they can be eliminated if there are abnormal samples. In other words, the core set only include the key information which may depend on the preference of decision makers. As for the sample space, we hope it include as much information of samples as possible. Thus, we can balance the degree of robustness and conservatism by adjusting the core set $Y$, the sample space $\Xi$ and the distance function $\phi$.
Besides, each of the above GDRO models with appropriate settings of $\Xi$, $Y$ and $\phi$ can be equivalently reformulated as a semi-definite program (SDP) under the assumption that $h(\bm{x},\bm{\xi})$ is piecewise linear in $\bm{\xi}$.

There are a lot of works concentrating on DRO under moment uncertainty.
Delage and Ye \cite{ye} discuss the complexity of such DRO problem and reformulate the DRO model of a portfolio optimization problem to a SDP when the set $\Phi$ is an ellipsoid or the whole space.
Liu et al. \cite{note_dro} consider a DRO model under moment uncertainty and obtain tractable reformulations when the set $\Phi$ is the whole space or a convex polyhedral set.
Cheng et al. \cite{dro_pca} provide an approximation method based on principal component analysis for solving DRO problems with moment-based ambiguity sets, the calculation time of which is reduced in the large-scale case compared with solving SDP reformulation directly.
As for more related works, we refer to \cite{var_dro, matrix, joint_cc}.

There are also some works similar to our ideas. Li and Kwon \cite{penalty} present a penalty framework based on DRO for the portfolio problem. They assume that the investor has a particularly strong preference for a specific measure $Q$. Therefore, it is reasonable to subtract $k H(P|Q)$ from the objective function, where $H$ is the penalty function and $k$ is the penalty coefficient. In their moment-based framework, the single measure $Q$ is replaced by a set of first two moments $(\bm{\mu}_c,\bm{\Sigma}_c) = \{(\bm{\mu}_i,\bm{\Sigma}_i)| i\in C\}$ and the penalty item becomes $\mathcal{P}_k(\bm{\mu},\bm{\Sigma}|\bm{\mu}_c,\bm{\Sigma}_c)$. The SDP reformulation can be generated under appropriate choices of the confidence region $(\bm{\mu}_c,\bm{\Sigma}_c)$ and the penalty function.
Besides, other globalized distributionally robust optimization model has appeared in \cite{globalized_dro}. Different from what we care about, they apply globalization to the ambiguity set. Their main idea is to construct two ambiguity sets: an inner ambiguity set of ``normal range'' of distributions and an outer ambiguity set of ``physically possible'' distributions. Expectation constraints in their model are forced to be feasible for all distributions in the inner ambiguity set but allowed to be infeasible for distributions in the outer ambiguity set, where the violation is controlled by the distance of the distribution and the inner ambiguity set.

The remainder of this paper is organized as follows. In Section 2, we derive the computationally tractable reformulations of the two GDRO models under some assumptions. Section 3 presents some numerical results. Section 4 provides some conclusions.

\emph{Notations.}  $\bm{x}$ is the decision vector and $X$ is the feasible region of $\bm{x}$. $\Xi \subseteq \mathbb{R}^p$ is the sample space and $\mathcal{B}(\Xi)$ is the Borel $\sigma$-algebra on $\Xi$. $\mathcal{P}$ is the set of all probability measures on the measurable space $(\Xi,\mathcal{B}(\Xi))$. $\bm{\xi}$ is a random vector satisfying $\bm{\xi}(\bm{\omega})=\bm{\omega}$ in probability space $(\Xi,\mathcal{B}(\Xi),P)$. $E_P[\cdot]$ represents the expectation with respect to $\bm{\xi}$. $\bm{\mu}_0$ is the sample mean and $\bm{\Sigma}_0$ is the sample covariance matrix. $Y$ is the core set in GDRO models. $\mathbb{R}^n$ is the set of real $n$-dimensional vectors, $\mathbb{S}^n$ is the set of real $n\times n$ symmetric matrices and $\mathbb{S}^n_+$ and $\mathbb{S}^n_{++}$ are the set of positive semi-definite and positive definite matrices in $\mathbb{S}^n$, respectively. $\bm{e}_p \in \mathbb{R}^p$ is a vector of 1 and $\bm{\mathcal{I}}_p \in \mathbb{R}^{p \times p}$ is the identity matrix.

\section{Computationally tractable GDRO reformulations}
Under some assumptions, corresponding tractable SDPs are reformulated considering two settings of the sample space of GDRO models. All proofs of the lemmas and theorems in this section are left in Appendix A.
\subsection{Preparations}
It is a common way to write the dual of the original model to reformulate the problem, which is also an essential part of deriving computationally tractable GDRO reformulations. Delage and Ye \cite{ye} have proved the following lemma.

\begin{lemma}[Delage and Ye \cite{ye}]\label{dual}
Suppose $\gamma_1 >0$, $\gamma_2 >0$ and $\bm{\Sigma}_0\in \mathbb{S}^p_{++}$ in the ambiguity set \cref{ambiguity} and that $h(\bm{x},\bm{\xi})$ is integrable for all
$P \in \mathcal{D}$ for a fixed $\bm{x} \in \mathbb{R}^n$. Then DRO \cref{DRO} with the ambiguity set \cref{ambiguity} can be reformulated as
\begin{equation}\label{reDRO}
\begin{split}
\min \limits_{\bm{x},\bm{\Lambda},\bm{q},t} \quad & t+\bm{\Lambda} \cdot (\gamma_2 \bm{\Sigma}_0 +\bm{\mu}_0 \bm{\mu}_0^{T})+ \sqrt{\gamma_1}||\bm{\Sigma}_0^{1/2} (\bm{q}+2\bm{\Lambda} \bm{\mu}_0)||_2 + \bm{q}^{T} \bm{\mu}_0 \\
s.t. \quad & h(\bm{x},\bm{\xi})-\bm{\xi}^{T}\bm{\Lambda} \bm{\xi} - \bm{q}^{T} \bm{\xi} \leq t \qquad \forall \bm{\xi} \in \Xi, \\
& \bm{\Lambda} \in \mathbb{S}_+^p, \quad \bm{q} \in \mathbb{R}^p, \quad t\in\mathbb{R}, \\
& \bm{x} \in X\subseteq \mathbb{R}^n,
\end{split}
\end{equation}
where $\bm{\Sigma}_0^{1/2}$ is one decomposition of $\bm{\Sigma}_0$ such that $\bm{\Sigma}_0=(\bm{\Sigma}_0^{1/2})^T\bm{\Sigma}_0^{1/2}.$
\end{lemma}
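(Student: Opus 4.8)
The plan is to fix the decision vector $\bm{x} \in X$ and treat the inner maximization over $P \in \mathcal{D}$ as an infinite-dimensional linear program (a moment problem) in the probability measure $P$, then pass to its Lagrangian dual and invoke strong duality. The objective $\int_\Xi h(\bm{x},\bm{\xi})\,dP$ is linear in $P$, and the three defining conditions of $\mathcal{D}$ in \cref{ambiguity} --- the normalization $\int_\Xi dP = 1$, the positive semi-definite second-moment bound, and the ellipsoidal mean bound --- are respectively a linear equality, a linear matrix inequality, and a convex (second-order cone) constraint in the moments of $P$. This structure is exactly what is needed to write a conic dual whose decision variables are $t$, $\bm{\Lambda}$, and $\bm{q}$.

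First I would attach multipliers to the constraints: a scalar $t$ to the normalization, a matrix $\bm{\Lambda} \in \mathbb{S}^p_+$ to the inequality $\gamma_2\bm{\Sigma}_0 - E_P[(\bm{\xi}-\bm{\mu}_0)(\bm{\xi}-\bm{\mu}_0)^T] \succeq 0$, and a vector $\bm{q} \in \mathbb{R}^p$ to an auxiliary equality $\int_\Xi \bm{\xi}\,dP = \bm{\mu}$ that isolates the mean as a free variable constrained by $\|\bm{\Sigma}_0^{-1/2}(\bm{\mu}-\bm{\mu}_0)\|_2 \le \sqrt{\gamma_1}$. Grouping the terms of the resulting Lagrangian that multiply $dP$ and demanding that the supremum over nonnegative measures $P$ be finite forces the integrand to be nonpositive pointwise; after expanding $(\bm{\xi}-\bm{\mu}_0)^T\bm{\Lambda}(\bm{\xi}-\bm{\mu}_0)$ and absorbing the constant $\bm{\mu}_0^T\bm{\Lambda}\bm{\mu}_0$ and the linear shift $2\bm{\Lambda}\bm{\mu}_0$ into redefinitions of $t$ and $\bm{q}$, this yields precisely the semi-infinite constraint $h(\bm{x},\bm{\xi}) - \bm{\xi}^T\bm{\Lambda}\bm{\xi} - \bm{q}^T\bm{\xi} \le t$ for all $\bm{\xi}\in\Xi$. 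The remaining, measure-independent part of the dual objective is assembled from the constant $t + \gamma_2\,\bm{\Lambda}\cdot\bm{\Sigma}_0$ together with the supremum of $\bm{q}^T\bm{\mu}$ over the mean ellipsoid, the latter being the ellipsoid support function $\bm{q}^T\bm{\mu}_0 + \sqrt{\gamma_1}\|\bm{\Sigma}_0^{1/2}\bm{q}\|_2$. Carrying the same change of variables through these constants reproduces the stated objective $t + \bm{\Lambda}\cdot(\gamma_2\bm{\Sigma}_0+\bm{\mu}_0\bm{\mu}_0^T) + \sqrt{\gamma_1}\|\bm{\Sigma}_0^{1/2}(\bm{q}+2\bm{\Lambda}\bm{\mu}_0)\|_2 + \bm{q}^T\bm{\mu}_0$.

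The hard part will be establishing that there is no duality gap, so that the value of the inner maximization actually equals the value of this dual minimization rather than merely being bounded by it. For this I would verify a Slater-type constraint qualification for the moment problem: because $\gamma_1>0$, $\gamma_2>0$ and $\bm{\Sigma}_0\in\mathbb{S}^p_{++}$, one can exhibit a probability measure whose mean lies strictly inside the ellipsoid and whose centered second-moment matrix is strictly dominated by $\gamma_2\bm{\Sigma}_0$ (a suitably small spread around $\bm{\mu}_0$), placing a feasible point in the relative interior of the constraint system. Invoking the conic-duality theorem for moment problems under this interior-point condition then gives strong duality and attainment, while the integrability hypothesis on $h(\bm{x},\bm{\xi})$ ensures the objective is well defined for every $P\in\mathcal{D}$.

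Finally, since the preceding equality holds for each fixed $\bm{x}\in X$, I would take the minimum over $\bm{x}\in X$ on both sides; the outer $\min_{\bm{x}}$ merges with the dual $\min_{t,\bm{\Lambda},\bm{q}}$ into the single joint minimization appearing in \cref{reDRO}, completing the reformulation.
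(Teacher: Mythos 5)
Your proposal is correct, and it arrives at \cref{reDRO} by the same overall strategy as the paper --- fix $\bm{x}$, dualize the inner moment problem over the cone of nonnegative measures, establish strong duality, and merge the outer minimization with the dual one --- but it executes the central step differently. The paper, following Delage and Ye, encodes the mean constraint as the linear matrix inequality $E_{P}\left[\begin{smallmatrix} -\bm{\Sigma}_0 & \bm{\mu}_0-\bm{\xi} \\ (\bm{\mu}_0-\bm{\xi})^{T} & -\gamma_1 \end{smallmatrix}\right] \preceq 0$ and dualizes it with a block positive semi-definite multiplier $\left[\begin{smallmatrix} \bm{\Lambda}_2 & \bm{\beta} \\ \bm{\beta}^{T} & s \end{smallmatrix}\right]$; the norm term $\sqrt{\gamma_1}\|\bm{\Sigma}_0^{1/2}(\bm{q}+2\bm{\Lambda}\bm{\mu}_0)\|_2$ emerges only after an elimination step in which $(\bm{\Lambda}_2,\bm{\beta},s)$ is solved for analytically at fixed $(\bm{\Lambda},t)$, using Schur's complement and the inequality $\tfrac{1}{s}\bm{\beta}^{T}\bm{\Sigma}_0\bm{\beta}+\gamma_1 s \ge 2\sqrt{\gamma_1\bm{\beta}^{T}\bm{\Sigma}_0\bm{\beta}}$, plus a separate degenerate case $s^*=0$. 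You instead lift the mean into an auxiliary variable $\bm{\mu}$, dualize only the coupling equality $E_P[\bm{\xi}]=\bm{\mu}$ with $\bm{q}$, and obtain the norm term in one stroke as the support function of the ellipsoid; your change of variables (shifting $\bm{q}$ by $2\bm{\Lambda}\bm{\mu}_0$ and $t$ by $\bm{\mu}_0^{T}\bm{\Lambda}\bm{\mu}_0$) does reproduce the stated objective exactly, including the $\bm{\mu}_0\bm{\mu}_0^{T}$ term. Your route buys a shorter derivation with no matrix lifting and no case analysis; the paper's route keeps the problem in the conic format to which Shapiro's Proposition 2.8 applies verbatim, which it then cites for the absence of a duality gap and dual attainment, whereas you verify a Slater-type point directly. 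These duality justifications are equivalent in substance, but note that your strictly feasible measure (a point mass or small spread at $\bm{\mu}_0$) must be supported in $\Xi$, so you are implicitly assuming $\bm{\mu}_0\in\Xi$ --- an assumption the paper's citation requires just as well.
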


This lemma is key for the following SDP reformulations. \cref{reDRO} decouples the min/max operations in the objective and the expectation of random vector $\bm{\xi}$. Now it is a semi-infinite programming without uncertainty.

In order to simplify the models and derive computationally tractable counterparts, we make the following assumption.
\begin{assumption}\label{plinear}
$h(\bm{x},\bm{\xi})$ is a piecewise linear concave function in $\bm{\xi}$ as following,
$$h(\bm{x},\bm{\xi})=\max \limits_{k\in \{1,2,...,K\}} \{\bm{a}_k(\bm{x})^{T} \bm{\xi}+b_k(\bm{x})\},$$
where $\bm{a}_k(\bm{x})$ and $b_k(\bm{x})$ are linear in $\bm{x}$ for all $k$.
\end{assumption}

It is easy to verify that $h(\bm{x},\bm{\xi})$ satisfies the integrable condition in \cref{dual} under this assumption. Piecewise linear function is widely used as the objective function in the portfolio optimization \cite{portfolio1,portfolio2,penalty} and inventory problems \cite{inventory1,inventory2}. Besides, some utility functions can be approximated accurately using piecewise linear functions in the portfolio optimization as stated in \cite{ye}. Especially, optimization problems with the worst case CVaR measure as the objective function can be regarded as DRO problems satisfying \cref{plinear} if the loss function $L(\bm{x},\bm{\xi})$ is linear in $\bm{\xi}$ and $\bm{x}$.
$$\text{WC-CVaR}_{\varepsilon}(L(\bm{x},\bm{\xi})) = \inf \limits_{\beta \in \mathbb{R}} \left( \beta + \frac{1}{\varepsilon} \sup \limits_{P \in \mathcal{D}} E_P[( L(\bm{x},\bm{\xi})-\beta)^+]\right).$$
Here $h(\bm{x},\bm{\xi}) = (L(\bm{x},\bm{\xi})-\beta)^+ = \max \{L(\bm{x},\bm{\xi})-\beta,0\}$.

\subsection{Globalization in objective function -- GDRO1} \label{obj_func}
In this subsection, we recall GDRO1 \cref{GDRO1} with the ambiguity set \cref{ambiguity}. For a fixed $\bm{x} \in \mathbb{R}^n$, assume that $h(\bm{x},\bm{\xi})-\min_{\bm{\xi}' \in Y} \phi(\bm{\xi},\bm{\xi}')$ is integrable for all $P \in \mathcal{D}$. According to \cref{dual}, GDRO1 \cref{GDRO1} with the ambiguity set \cref{ambiguity} and $\gamma_1 >0$, $\gamma_2 >0$, and $\bm{\Sigma}_0\in \mathbb{S}^p_{++}$ can be equivalently reformulated as
\begin{equation}\label{trac1}
\begin{split}
\min \limits_{\bm{x},\bm{\Lambda},\bm{q},t} \quad & t+\bm{\Lambda} \cdot (\gamma_2 \bm{\Sigma}_0 +\bm{\mu}_0 \bm{\mu}_0^{T})+ \sqrt{\gamma_1}||\bm{\Sigma}_0^{1/2} (\bm{q}+2\bm{\Lambda} \bm{\mu}_0)||_2 + \bm{q}^{T} \bm{\mu}_0 \\
s.t. \quad & h(\bm{x},\bm{\xi})-\bm{\xi}^{T}\bm{\Lambda} \bm{\xi} - \bm{q}^{T} \bm{\xi}-t \leq \min \limits_{\bm{\xi}' \in Y} \phi(\bm{\xi},\bm{\xi}') \qquad \forall \bm{\xi} \in \Xi, \\
& \bm{\Lambda} \in \mathbb{S}^p_+, \quad \bm{q} \in \mathbb{R}^p, \quad t\in\mathbb{R} ,\\
& \bm{x} \in X\subseteq\mathbb{R}^n.
\end{split}
\end{equation}

Especially, if we have no idea about the estimation of the sample space, then it is reasonable to set
\begin{equation}\label{unbounded}
\begin{split}
& Y = \{\bm{\xi} \in \mathbb{R}^p |
    (\bm{\xi}-\bar{\bm{\mu}})^{T}\bm{\Sigma}_0^{-1}(\bm{\xi}-\bar{\bm{\mu}}) \leq \bar{\gamma} \}, \\
& \Xi = \mathbb{R}^p.
\end{split}
\end{equation}

\begin{theorem}\label{thm1}
Let $\phi(\bm{\xi},\bm{\xi}') = \theta \|\bm{\xi}-\bm{\xi}'\|_2$ for a given $\theta\ge 0$ and $Y$ and $\Xi$ be defined as \cref{unbounded}. Under \cref{plinear},  \cref{trac1}
is equivalent to the following SDP problem when $\gamma_1 >0$, $\gamma_2 >0$ and $\bm{\Sigma}_0\in \mathbb{S}^p_{++}$,
\begin{equation}\label{tracGDRC_1_Rp}
\begin{split}
\min \limits_{\bm{x},\bm{\Lambda},\bm{q},t,\bm{v}_k,z_k} & t+\bm{\Lambda} \cdot (\gamma_2 \bm{\Sigma}_0 +\bm{\mu}_0 \bm{\mu}_0^{T})+ \sqrt{\gamma_1}||\bm{\Sigma}_0^{1/2} (\bm{q}+2\bm{\Lambda}\bm{\mu}_0)||_2 + \bm{q}^{T} \bm{\mu}_0 \\
s.t.\quad & \begin{bmatrix}
      \bm{\Lambda} & \frac{1}{2}(\bm{v}_k +\bm{q}-\bm{a}_k(\bm{x})) \\
      \frac{1}{2}(\bm{v}_k +\bm{q}-\bm{a}_k(\bm{x}))^{T} & t-b_k(\bm{x})-\bar{\bm{\mu}}^{T} \bm{v}_k -z_k\\
      \end{bmatrix}  \in \mathbb{S}^{p+1}_+, \\
      &\sqrt{\bar{\gamma}} \|\bm{\Sigma}_0^{1/2}\bm{v}_k  \|_2\le z_k, \\
       & \|\bm{v}_k\|_2 \leq \theta, \\
       & \bm{v}_k \in \mathbb{R}^p, \quad z_k \in \mathbb{R}, \quad 1\le k\le K, \\
       & \bm{\Lambda} \in \mathbb{S}^p_+, \quad \bm{q}\in \mathbb{R}^p, \quad t \in \mathbb{R}, \\
       & \bm{x} \in X\subseteq\mathbb{R}^n.
\end{split}
\end{equation}
\end{theorem}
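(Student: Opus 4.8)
The plan is to start from the semi-infinite reformulation \cref{trac1} and turn its single robust constraint into the finite system of linear matrix inequalities in \cref{tracGDRC_1_Rp}. First I would exploit \cref{plinear} together with the choice $\Xi=\mathbb{R}^p$: since $h(\bm{x},\bm{\xi})=\max_{k}\{\bm{a}_k(\bm{x})^{T}\bm{\xi}+b_k(\bm{x})\}$, the constraint $h(\bm{x},\bm{\xi})-\bm{\xi}^{T}\bm{\Lambda}\bm{\xi}-\bm{q}^{T}\bm{\xi}-t\le\min_{\bm{\xi}'\in Y}\phi(\bm{\xi},\bm{\xi}')$ for all $\bm{\xi}\in\mathbb{R}^p$ is equivalent to the $K$ separate conditions $g_k(\bm{\xi}):=\bm{a}_k(\bm{x})^{T}\bm{\xi}+b_k(\bm{x})-\bm{\xi}^{T}\bm{\Lambda}\bm{\xi}-\bm{q}^{T}\bm{\xi}-t\le\theta\,\mathrm{dist}(\bm{\xi},Y)$ for all $\bm{\xi}$, using $\min_{\bm{\xi}'\in Y}\theta\|\bm{\xi}-\bm{\xi}'\|_2=\theta\,\mathrm{dist}(\bm{\xi},Y)$. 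Each $g_k$ is concave in $\bm{\xi}$.

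Next I would linearize the distance to the ellipsoid $Y$ through its support function. Writing $\theta\,\mathrm{dist}(\bm{\xi},Y)=\sup_{\|\bm{v}\|_2\le\theta}(\bm{v}^{T}\bm{\xi}-\sigma_Y(\bm{v}))$ and computing $\sigma_Y(\bm{v})=\sup_{\bm{\xi}'\in Y}\bm{v}^{T}\bm{\xi}'=\bar{\bm{\mu}}^{T}\bm{v}+\sqrt{\bar{\gamma}}\|\bm{\Sigma}_0^{1/2}\bm{v}\|_2$ for the ellipsoid in \cref{unbounded}, the $k$-th condition reads $\sup_{\bm{\xi}}\inf_{\|\bm{v}\|_2\le\theta}[g_k(\bm{\xi})-\bm{v}^{T}\bm{\xi}+\sigma_Y(\bm{v})]\le0$. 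The implication ``a single feasible $\bm{v}$ suffices'' is immediate: if some $\bm{v}$ with $\|\bm{v}\|_2\le\theta$ satisfies $g_k(\bm{\xi})\le\bm{v}^{T}\bm{\xi}-\sigma_Y(\bm{v})$ for all $\bm{\xi}$, then $g_k(\bm{\xi})\le\theta\,\mathrm{dist}(\bm{\xi},Y)$.

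The converse is the crux and the step I expect to be hardest. Here I would exchange the order of $\sup_{\bm{\xi}}$ and $\inf_{\bm{v}}$. The inner variable $\bm{v}$ ranges over the compact convex ball $\{\|\bm{v}\|_2\le\theta\}$, and the integrand is concave and upper semicontinuous in $\bm{\xi}$ and convex and lower semicontinuous in $\bm{v}$, so Sion's minimax theorem applies even though $\bm{\xi}$ runs over the noncompact $\mathbb{R}^p$ (the compactness Sion requires sits on the minimized factor). This yields $\inf_{\|\bm{v}\|_2\le\theta}\sup_{\bm{\xi}}[g_k(\bm{\xi})-\bm{v}^{T}\bm{\xi}+\sigma_Y(\bm{v})]\le0$, and since the outer objective is convex and lower semicontinuous in $\bm{v}$ the infimum over the compact ball is attained at some $\bm{v}_k$. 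Thus the robust constraint is equivalent to the existence of $\bm{v}_k$ with $\|\bm{v}_k\|_2\le\theta$ and $\sigma_Y(\bm{v}_k)+\sup_{\bm{\xi}}(g_k(\bm{\xi})-\bm{v}_k^{T}\bm{\xi})\le0$.

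Finally I would evaluate the remaining concave-quadratic maximization and package it as an LMI. Since $\sup_{\bm{\xi}}[(\bm{a}_k-\bm{q}-\bm{v}_k)^{T}\bm{\xi}-\bm{\xi}^{T}\bm{\Lambda}\bm{\xi}]=\tfrac14(\bm{a}_k-\bm{q}-\bm{v}_k)^{T}\bm{\Lambda}^{-1}(\bm{a}_k-\bm{q}-\bm{v}_k)$ when $\bm{\Lambda}\succ 0$, and this quadratic form is unchanged when its argument is negated, the condition becomes $\tfrac14(\bm{v}_k+\bm{q}-\bm{a}_k)^{T}\bm{\Lambda}^{-1}(\bm{v}_k+\bm{q}-\bm{a}_k)+\bar{\bm{\mu}}^{T}\bm{v}_k+\sqrt{\bar{\gamma}}\|\bm{\Sigma}_0^{1/2}\bm{v}_k\|_2+b_k-t\le0$. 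Introducing $z_k$ with $\sqrt{\bar{\gamma}}\|\bm{\Sigma}_0^{1/2}\bm{v}_k\|_2\le z_k$ to split off the norm, and applying the Schur complement to the quadratic term, reproduces exactly the $(p+1)\times(p+1)$ block matrix, the second-order cone constraint, and the bound $\|\bm{v}_k\|_2\le\theta$ of \cref{tracGDRC_1_Rp}; the objective is carried over unchanged from \cref{trac1}. The remaining care is the case $\bm{\Lambda}\in\mathbb{S}^p_+$ singular: there the supremum is finite only when $\bm{a}_k-\bm{q}-\bm{v}_k\in\mathrm{range}(\bm{\Lambda})$, and I would invoke the generalized Schur complement (with the Moore--Penrose pseudoinverse and the range condition) to show the LMI still captures the constraint exactly.
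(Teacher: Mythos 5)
Your proof is correct, and it reaches \cref{tracGDRC_1_Rp} by a genuinely different route from the paper's. Both arguments begin identically, using \cref{plinear} to split the semi-infinite constraint of \cref{trac1} into $K$ separate constraints, and both end identically, packaging a concave quadratic inequality over $\bm{\xi}\in\mathbb{R}^p$ as a $(p+1)\times(p+1)$ linear matrix inequality; the difference is the middle step. The paper invokes the globalized-RO theorem of Ben-Tal et al.\ (\cref{bental}) as a black box: it instantiates $Z_1=\{\bm{\zeta}:\|\bm{\zeta}\|_2\le\sqrt{\bar{\gamma}}\}$ and $Z_2=\mathbb{R}^p$, computes $\delta^*(\bm{A}^{T}\bm{v}|Z_1)=\sqrt{\bar{\gamma}}\|\bm{\Sigma}_0^{1/2}\bm{v}\|_2$, computes $\phi^{**}(\bm{v},-\bm{v})$ (equal to $0$ if $\|\bm{v}\|_2\le\theta$ and $+\infty$ otherwise), and the concave conjugate $f_{k,*}$, with $\delta^*(\bm{A}^{T}\bm{w}|Z_2)$ forcing the second dual variable $\bm{w}$ to vanish. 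You instead prove the needed equivalence from first principles: the biconjugate representation $\theta\,\mathrm{dist}(\bm{\xi},Y)=\sup_{\|\bm{v}\|_2\le\theta}(\bm{v}^{T}\bm{\xi}-\sigma_Y(\bm{v}))$, a sup--inf exchange by Sion's minimax theorem with compactness correctly placed on the $\bm{v}$-ball (so the noncompactness of $\bm{\xi}\in\mathbb{R}^p$ is harmless), and attainment of the infimum by lower semicontinuity on that ball; this is in effect an inline proof of exactly the special case of \cref{bental} with $Z_2=\mathbb{R}^p$ that the paper needs. What each buys: your argument is self-contained (no appeal to the external GRO theorem), never introduces the superfluous variable $\bm{w}$, and makes explicit where compactness and attainment are used; the paper's argument is shorter given the citation and transfers verbatim to the bounded case of \cref{thm2}, where your minimax step would have to be rerun with the second ellipsoid (it still goes through, since compactness stays on the $\bm{v}$ side, and is in fact easier because $\Xi$ is then compact). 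A final cosmetic difference: the paper passes to the LMI via the homogenization identity --- a quadratic $\bm{\xi}^{T}\bm{\Lambda}\bm{\xi}+\bm{b}^{T}\bm{\xi}+c$ is nonnegative on all of $\mathbb{R}^p$ if and only if $\bigl[\begin{smallmatrix}\bm{\Lambda} & \bm{b}/2\\ \bm{b}^{T}/2 & c\end{smallmatrix}\bigr]\succeq 0$ --- which absorbs singular $\bm{\Lambda}$ with no case split, whereas your Schur-complement route needs the Moore--Penrose pseudoinverse and the range condition; both treatments are valid.
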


However, the case that the sample space is known to be bounded may also arise in the real life. We may not be able to know the exact shape and size of the sample space. In this case, the idea of globalization also make sense by setting
\begin{equation}\label{bounded}
\begin{split}
& Y = \{\bm{\xi} \in \mathbb{R}^p |
    (\bm{\xi}-\bar{\bm{\mu}})^{T}\bm{\Sigma}_0^{-1}(\bm{\xi}-\bar{\bm{\mu}}) \leq \bar{\gamma}_1 \},\\
& \Xi = \{\bm{\xi} \in \mathbb{R}^p |
    (\bm{\xi}-\bar{\bm{\mu}})^{T}\bm{\Sigma}_0^{-1}(\bm{\xi}-\bar{\bm{\mu}}) \leq \bar{\gamma}_2 \},\\
\end{split}
\end{equation}
where $0 < \bar{\gamma}_1 < \bar{\gamma}_2$.

\begin{theorem}\label{thm2}
Let $\phi(\bm{\xi},\bm{\xi}') = \theta \|\bm{\xi}-\bm{\xi}'\|_2$ for a given $\theta\ge 0$ and $Y$ and $\Xi$ be defined as \cref{bounded}. Under \cref{plinear}, \cref{trac1}
is equivalent to the following SDP problem when $\gamma_1 >0$, $\gamma_2 >0$, and $\bm{\Sigma}_0\in \mathbb{S}^p_{++}$,
{\small
\begin{equation}\label{tracGDRC_1_bounded}
\begin{split}
\min \limits_{\begin{array}{c}\bm{x},\bm{\Lambda},\bm{q},t\\ \bm{v}_k,\bm{w}_k\\ z_{1k},z_{2k} \end{array}} & t+\bm{\Lambda} \cdot (\gamma_2 \bm{\Sigma}_0 +\bm{\mu}_0 \bm{\mu}_0^{T})+ \sqrt{\gamma_1}||\bm{\Sigma}_0^{1/2} (\bm{q}+2\bm{\Lambda}\bm{\mu}_0)||_2 + \bm{q}^{T} \bm{\mu}_0 \\
s.t.\quad & \left[\begin{array}{cc}
      \bm{\Lambda} & \frac{1}{2} (\bm{v}_k + \bm{w}_k +\bm{q}-\bm{a}_k(\bm{x}))\\
      \frac{1}{2}(\bm{v}_k + \bm{w}_k +\bm{q}-\bm{a}_k(\bm{x}))^{T} & t-b_k(\bm{x})-\bar{\bm{\mu}}^{T} (\bm{v}_k + \bm{w}_k) - z_{1k} - z_{2k} \\
      \end{array} \right]\in S^{p+1}_+ , \\
       & \sqrt{\bar{\gamma}_1} \|\bm{\Sigma}_0^{1/2}\bm{v}_k \|_2 \leq z_{1k}, \\
       & \sqrt{\bar{\gamma}_2} \|\bm{\Sigma}_0^{1/2}\bm{w}_k \|_2 \leq z_{2k}, \\
       & \|\bm{v}_k\|_2 \leq \theta, \\
       & \bm{v}_{k}, \bm{w}_{k}\in \mathbb{R}^p, \quad z_{1k},z_{2k}\in\mathbb{R}, \quad 1\le k\le K,\\
       & \bm{\Lambda} \in \mathbb{S}^p_+, \quad \bm{q}\in \mathbb{R}^p, \quad t\in\mathbb{R},\\
       & \bm{x} \in X\subseteq\mathbb{R}^n.
\end{split}
\end{equation}
}
\end{theorem}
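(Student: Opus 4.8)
The plan is to reduce the single semi-infinite constraint in \cref{trac1} to the family of conic constraints in \cref{tracGDRC_1_bounded} by dualizing, in turn, the inner minimization over the core set $Y$, the supremum over the sample space $\Xi$, and finally the quadratic term in $\bm{\xi}$. First I would invoke \cref{plinear} to write $h(\bm{x},\bm{\xi})=\max_k\{\bm{a}_k(\bm{x})^T\bm{\xi}+b_k(\bm{x})\}$ and observe that, because the maximum over $k$ sits on the left-hand side, the constraint $h(\bm{x},\bm{\xi})-\bm{\xi}^T\bm{\Lambda}\bm{\xi}-\bm{q}^T\bm{\xi}-t\le\min_{\bm{\xi}'\in Y}\phi(\bm{\xi},\bm{\xi}')$ for all $\bm{\xi}\in\Xi$ splits into $K$ separate constraints, one per affine piece. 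Writing $\bm{a}_k,b_k$ for $\bm{a}_k(\bm{x}),b_k(\bm{x})$ and $g_k(\bm{\xi}):=\bm{a}_k^T\bm{\xi}+b_k-\bm{\xi}^T\bm{\Lambda}\bm{\xi}-\bm{q}^T\bm{\xi}-t$, the $k$-th constraint is $\sup_{\bm{\xi}\in\Xi}\big[g_k(\bm{\xi})-\min_{\bm{\xi}'\in Y}\theta\|\bm{\xi}-\bm{\xi}'\|_2\big]\le 0$.

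Next I would handle the penalty via the dual-norm identity $\theta\|\bm{\xi}-\bm{\xi}'\|_2=\max_{\|\bm{u}\|_2\le\theta}\bm{u}^T(\bm{\xi}-\bm{\xi}')$, turning the inner $\min_{\bm{\xi}'\in Y}$ into a bilinear saddle expression in $(\bm{\xi}',\bm{u})$. Since $Y$ and the ball $\{\|\bm{u}\|_2\le\theta\}$ are convex and compact, Sion's minimax theorem lets me interchange $\max_{\bm{\xi}'\in Y}$ and $\min_{\|\bm{u}\|_2\le\theta}$; evaluating $\max_{\bm{\xi}'\in Y}\bm{u}^T\bm{\xi}'$ gives the support function $\bm{u}^T\bar{\bm{\mu}}+\sqrt{\bar{\gamma}_1}\|\bm{\Sigma}_0^{1/2}\bm{u}\|_2$ of the ellipsoid $Y$ in \cref{bounded}. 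A second application of Sion's theorem, here using $\bm{\Lambda}\in\mathbb{S}^p_+$ so that $g_k$ is concave in $\bm{\xi}$ while the expression is convex in $\bm{u}$, both over compact sets, interchanges $\sup_{\bm{\xi}\in\Xi}$ with $\min_{\|\bm{u}\|_2\le\theta}$. This produces an existential variable $\bm{u}=\bm{v}_k$ with $\|\bm{v}_k\|_2\le\theta$ and, after introducing $z_{1k}\ge\sqrt{\bar{\gamma}_1}\|\bm{\Sigma}_0^{1/2}\bm{v}_k\|_2$, reduces the $k$-th constraint to $\sup_{\bm{\xi}\in\Xi}\big[{-\bm{\xi}^T\bm{\Lambda}\bm{\xi}}+(\bm{a}_k-\bm{q}-\bm{v}_k)^T\bm{\xi}\big]+b_k-t+\bm{v}_k^T\bar{\bm{\mu}}+z_{1k}\le 0$.

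The crux is the remaining supremum of a concave quadratic over the ellipsoid $\Xi$. The point I would stress is that one should not apply the S-lemma here, since doing so would dualize the ellipsoid as a quadratic and leave a matrix $\bm{\Lambda}+\lambda\bm{\Sigma}_0^{-1}$, which does not match the target matrix in \cref{tracGDRC_1_bounded}. Instead I would split the linear term, keeping $-\bm{\xi}^T\bm{\Lambda}\bm{\xi}+(\bm{a}_k-\bm{q}-\bm{v}_k-\bm{w}_k)^T\bm{\xi}$ over all of $\mathbb{R}^p$ and sending the correction $\bm{w}_k^T\bm{\xi}$ to the support function of $\Xi$. Equivalently, writing the objective as the conjugate of $\bm{\xi}\mapsto\bm{\xi}^T\bm{\Lambda}\bm{\xi}+\delta_\Xi(\bm{\xi})$ evaluated at $\bm{a}_k-\bm{q}-\bm{v}_k$, the conjugate-of-sum (infimal convolution) rule, valid because $\bm{\xi}\mapsto\bm{\xi}^T\bm{\Lambda}\bm{\xi}$ is finite and continuous everywhere and $\Xi$ is a full-dimensional ellipsoid so the relative-interior regularity condition holds, yields the exact identity with the support function $\bm{w}_k^T\bar{\bm{\mu}}+\sqrt{\bar{\gamma}_2}\|\bm{\Sigma}_0^{1/2}\bm{w}_k\|_2$. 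Setting $z_{2k}\ge\sqrt{\bar{\gamma}_2}\|\bm{\Sigma}_0^{1/2}\bm{w}_k\|_2$ leaves the unconstrained requirement $\bm{\xi}^T\bm{\Lambda}\bm{\xi}+(\bm{v}_k+\bm{w}_k+\bm{q}-\bm{a}_k)^T\bm{\xi}+\big[t-b_k-\bar{\bm{\mu}}^T(\bm{v}_k+\bm{w}_k)-z_{1k}-z_{2k}\big]\ge 0$ for all $\bm{\xi}\in\mathbb{R}^p$.

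Finally, since $\bm{\Lambda}\in\mathbb{S}^p_+$, nonnegativity of this quadratic form over $\mathbb{R}^p$ is equivalent to positive semidefiniteness of the bordered $(p+1)\times(p+1)$ matrix appearing in \cref{tracGDRC_1_bounded}, via the homogenization $(\bm{\xi},1)\mapsto(\bm{\xi},s)$. Collecting the pieces over all $k$, and observing that the objective of \cref{trac1} is untouched and that the auxiliary variables $\bm{v}_k,\bm{w}_k,z_{1k},z_{2k}$ enter only the constraints, the feasible sets in $(\bm{x},\bm{\Lambda},\bm{q},t)$ coincide, giving the claimed equivalence. I expect the main obstacle to be the justification of the two saddle-point interchanges and the conjugate-of-sum step: one must verify the compactness and convexity hypotheses of Sion's theorem and the relative-interior condition for Fenchel duality, and in particular recognize that the reverse (easy) direction can alternatively be checked by a direct generalized Cauchy--Schwarz bound $\bm{w}_k^T(\bm{\xi}-\bar{\bm{\mu}})\le\sqrt{\bar{\gamma}_2}\|\bm{\Sigma}_0^{1/2}\bm{w}_k\|_2\le z_{2k}$ on $\Xi$, which is precisely what pins down the correct way to split the linear term.
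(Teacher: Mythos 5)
Your proposal is correct, but it reaches \cref{tracGDRC_1_bounded} by a genuinely different route than the paper. The paper's proof (Appendix A.2) splits the semi-infinite constraint of \cref{trac1} into $K$ pieces exactly as you do, but then invokes the globalized-RO reformulation theorem of Ben-Tal et al.\ (\cref{bental}, imported from \cite{globalized_ro}) as a black box: it instantiates $Z_1,Z_2$ as Euclidean balls of radii $\sqrt{\bar{\gamma}_1},\sqrt{\bar{\gamma}_2}$, $\bm{A}=\bm{\Sigma}_0^{1/2}$, $\bm{\xi}_0=\bar{\bm{\mu}}$, computes the support functions $\delta^*(\bm{A}^{T}\bm{v}|Z_1)$, $\delta^*(\bm{A}^{T}\bm{w}|Z_2)$, the biconjugate $\phi^{**}(\bm{v},-\bm{v})$ (finite exactly when $\|\bm{v}\|_2\le\theta$) and the concave conjugate $f_{k,*}$, and finishes with the same homogenization to an LMI. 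You instead re-derive the content of that cited theorem for this special case: your dual-norm/Sion step producing $\bm{v}_k$ recovers the terms $\bm{\xi}_0^{T}\bm{v}+\delta^*(\bm{A}^{T}\bm{v}|Z_1)+\phi^{**}(\bm{v},-\bm{v})$ of the certificate inequality \cref{theo_2}, and your conjugate-of-sum (infimal convolution) step producing $\bm{w}_k$ recovers $\bm{\xi}_0^{T}\bm{w}+\delta^*(\bm{A}^{T}\bm{w}|Z_2)-f_*(\bm{v}+\bm{w},\bm{x})$. Your route is self-contained and makes explicit where each hypothesis enters---compactness and convexity of $Y$ and $\Xi$ for the two Sion interchanges, full-dimensionality of $\Xi$ (so that $\mathrm{ri}(\mathrm{dom}\,\delta(\cdot|\Xi))$ meets $\mathrm{dom}$ of the quadratic, giving exactness of the infimal convolution), and $\bm{\Lambda}\in\mathbb{S}^p_+$ for concavity in $\bm{\xi}$---at the price of having to verify these regularity conditions yourself, whereas the paper's proof is shorter because the duality is delegated to the imported theorem. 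Your remark that the S-lemma would produce a matrix of the form $\bm{\Lambda}+\lambda\bm{\Sigma}_0^{-1}$, hence a valid but differently structured SDP, is also accurate: the linear splitting of the objective, rather than quadratic dualization of the ellipsoid $\Xi$, is what reproduces the stated constraint system.
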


It is obvious that we are able to control the degree of conservatism by adjusting the coefficient $\theta$ in the distance function $\phi(\bm{\xi},\bm{\xi}') = \theta \|\bm{\xi}-\bm{\xi}'\|_2$. If $\theta=0$, the two GDRO models degenerate into the classical DRO models. In \cref{tracGDRC_1_bounded}, we observe that it forces $\bm{w}_k=0$ and $z_{2k}=0$ for all $k$ when the parameter $\bar{\gamma}_2$ tends to be infinite, resulting in \cref{tracGDRC_1_bounded} to be the same as \cref{tracGDRC_1_Rp}.

\subsection{Globalization in ambiguity set -- GDRO2} \label{ambi_set}
In this subsection, we recall GDRO2 \cref{GDRO2} with the ambiguity set \cref{ambiguity2}. Similarly to \cref{dual}, under assumptions that $\gamma_1 >0$, $\gamma_2 >0$ and $\bm{\Sigma}_0\in \mathbb{S}^p_{++}$ and $h(\bm{x},\bm{\xi})$ is integrable for all $P \in \mathcal{D}$, the GDRO \cref{GDRO2} with the ambiguity set \cref{ambiguity2} for a fixed $\bm{x} \in \mathbb{R}^n$  can be reformulated as
\begin{equation}\label{trac2}
\begin{split}
\min \limits_{\bm{x},\bm{\Lambda},\bm{q},t,r} \quad & t+\bm{\Lambda} \cdot (\gamma_2 \bm{\Sigma}_0 +\bm{\mu}_0 \bm{\mu}_0^{T})+ \sqrt{\gamma_1}||\bm{\Sigma}_0^{1/2} (\bm{q}+2\bm{\Lambda} \bm{\mu}_0)||_2 + \bm{q}^{T} \bm{\mu}_0 + d_0 r\\
s.t. \quad & h(\bm{x},\bm{\xi})-\bm{\xi}^{T}\bm{\Lambda} \bm{\xi} - \bm{q}^{T} \bm{\xi}-t \leq r \min \limits_{\bm{\xi}' \in Y} \phi(\bm{\xi},\bm{\xi}') \qquad \forall \bm{\xi} \in \Xi, \\
& \bm{\Lambda} \in \mathbb{S}_+^p, \quad \bm{q} \in \mathbb{R}^p, \quad t\in\mathbb{R}, \quad r \geq 0, \\
& \bm{x} \in X\subseteq \mathbb{R}^n.
\end{split}
\end{equation}

Similarly, we have conclusions as follows for the case \cref{unbounded,bounded}.

\begin{theorem}\label{thm3}
Let $\phi(\bm{\xi},\bm{\xi}') = \theta \|\bm{\xi}-\bm{\xi}'\|_2$ for a given $\theta\ge 0$ and $Y$ and $\Xi$ be defined as \cref{unbounded}. Under \cref{plinear}, \cref{trac2}
is equivalent to the following SDP problem when $\gamma_1 >0$, $\gamma_2 >0$ and $\bm{\Sigma}_0 \in \mathbb{S}^p_{++}$,
\begin{equation}\label{tracGDRC_2_Rp}
\begin{split}
\min \limits_{\bm{x},\bm{\Lambda},\bm{q},t,\bm{v}_k,z_k,r} \quad & t+\bm{\Lambda} \cdot (\gamma_2 \bm{\Sigma}_0 +\bm{\mu}_0 \bm{\mu}_0^{T})+ \sqrt{\gamma_1}||\bm{\Sigma}_0^{1/2} (\bm{q}+2\bm{\Lambda}\bm{\mu}_0)||_2 + \bm{q}^{T} \bm{\mu}_0 + d_0 r \\
s.t. & \begin{bmatrix}
      \bm{\Lambda} & \dfrac{1}{2}(\bm{v}_k +\bm{q}-\bm{a}_k(\bm{x})) \\
      \dfrac{1}{2}(\bm{v}_k +\bm{q}-\bm{a}_k(\bm{x}))^{T} & t-b_k(\bm{x})-\bar{\bm{\mu}}^{T} \bm{v}_k - z_k\\
      \end{bmatrix}\in \mathbb{S}^{p+1}_+ ,\\
       & \sqrt{\bar{\gamma}} \|\bm{\Sigma}_0^{1/2}\bm{v}_k \|_2 \leq z_k, \\
       & \|\bm{v}_k\|_2 \leq r \theta, \\
       & \bm{v}_k \in \mathbb{R}^p, \quad z_k\in\mathbb{R}, \quad 1\le k\le K, \\
       & \bm{\Lambda} \in \mathbb{S}^p_+, \quad \bm{q}\in \mathbb{R}^p, \quad t\in\mathbb{R}, \quad r \geq 0,\\
       & \bm{x} \in X\subseteq \mathbb{R}^n.
\end{split}
\end{equation}
\end{theorem}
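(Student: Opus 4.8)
The plan is to reduce \cref{thm3} to \cref{thm1} by isolating the only structural difference between \cref{trac2} and \cref{trac1}, namely the nonnegative multiplier $r$. Since $r\ge 0$, the penalty term in \cref{trac2} satisfies $r\min_{\bm{\xi}'\in Y}\phi(\bm{\xi},\bm{\xi}')=\min_{\bm{\xi}'\in Y} r\theta\|\bm{\xi}-\bm{\xi}'\|_2=(r\theta)\,\mathrm{dist}(\bm{\xi},Y)$, so for each fixed $r$ the semi-infinite constraint of \cref{trac2} is exactly that of \cref{trac1} with the coefficient $\theta$ replaced by the effective coefficient $r\theta$. Hence \cref{thm1}, applied with $r\theta$ in place of $\theta$, converts that constraint into the $(p+1)\times(p+1)$ LMI together with $\sqrt{\bar{\gamma}}\|\bm{\Sigma}_0^{1/2}\bm{v}_k\|_2\le z_k$ and $\|\bm{v}_k\|_2\le r\theta$; reintroducing $r$ as a free variable and keeping the term $d_0 r$, which is untouched by the reformulation and identical in both objectives, then yields \cref{tracGDRC_2_Rp}.

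To make this self-contained I would re-derive the constraint-level equivalence for the effective coefficient. Under \cref{plinear} the single semi-infinite constraint splits into $K$ constraints, one for each affine piece $f_k(\bm{\xi}):=\bm{a}_k(\bm{x})^{T}\bm{\xi}+b_k(\bm{x})-\bm{\xi}^{T}\bm{\Lambda}\bm{\xi}-\bm{q}^{T}\bm{\xi}-t$. Because the right-hand side is a minimum over $\bm{\xi}'\in Y$ and the left-hand side does not involve $\bm{\xi}'$, the requirement $f_k(\bm{\xi})\le (r\theta)\min_{\bm{\xi}'\in Y}\|\bm{\xi}-\bm{\xi}'\|_2$ for all $\bm{\xi}$ is equivalent to $f_k(\bm{\xi})\le r\theta\|\bm{\xi}-\bm{\xi}'\|_2$ for all $\bm{\xi}\in\mathbb{R}^p$ and all $\bm{\xi}'\in Y$. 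For sufficiency, given $\bm{v}_k,z_k$ feasible, the LMI is, by a Schur-complement argument, equivalent to $f_k(\bm{\xi})\le\bm{v}_k^{T}(\bm{\xi}-\bar{\bm{\mu}})-z_k$ for all $\bm{\xi}$; splitting $\bm{v}_k^{T}(\bm{\xi}-\bar{\bm{\mu}})=\bm{v}_k^{T}(\bm{\xi}-\bm{\xi}')+\bm{v}_k^{T}(\bm{\xi}'-\bar{\bm{\mu}})$, bounding the first term by $\|\bm{v}_k\|_2\|\bm{\xi}-\bm{\xi}'\|_2\le r\theta\|\bm{\xi}-\bm{\xi}'\|_2$ via Cauchy--Schwarz and the second by the ellipsoid support function $\sqrt{\bar{\gamma}}\|\bm{\Sigma}_0^{1/2}\bm{v}_k\|_2\le z_k$ for $\bm{\xi}'\in Y$, gives $f_k(\bm{\xi})\le r\theta\|\bm{\xi}-\bm{\xi}'\|_2$, as required.

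The necessity direction is where the real work lies. After the substitution $\bm{\eta}=\bm{\xi}-\bm{\xi}'$ and the dual representation $r\theta\|\bm{\eta}\|_2=\max_{\|\bm{v}_k\|_2\le r\theta}\bm{v}_k^{T}\bm{\eta}$, the constraint becomes $\sup_{\bm{\xi}'\in Y,\,\bm{\eta}\in\mathbb{R}^p}\min_{\|\bm{v}_k\|_2\le r\theta}\big[f_k(\bm{\xi}'+\bm{\eta})-\bm{v}_k^{T}\bm{\eta}\big]\le 0$, and I must exchange the outer supremum with the inner minimum to produce a single multiplier $\bm{v}_k$ valid for all $(\bm{\xi}',\bm{\eta})$. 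This is the main obstacle: the exchange is justified by Sion's minimax theorem, which applies because the objective is concave in $(\bm{\xi}',\bm{\eta})$ (here $\bm{\Lambda}\in\mathbb{S}^p_+$ is used), linear and hence convex in $\bm{v}_k$, and the multiplier set $\{\|\bm{v}_k\|_2\le r\theta\}$ is convex and compact. After the swap, the supremum over $\bm{\xi}'$ and $\bm{\eta}$ separates into the ellipsoid support function of $Y$, contributing $\sqrt{\bar{\gamma}}\|\bm{\Sigma}_0^{1/2}\bm{v}_k\|_2+\bar{\bm{\mu}}^{T}\bm{v}_k$, and an unconstrained concave-quadratic maximization over $\bm{\xi}$, which is encoded exactly by the $(p+1)\times(p+1)$ LMI through the generalized Schur complement; setting $z_k\ge\sqrt{\bar{\gamma}}\|\bm{\Sigma}_0^{1/2}\bm{v}_k\|_2$ then recovers all constraints of \cref{tracGDRC_2_Rp}.

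Finally I would tie up the degenerate cases that the minimax argument skirts: when $r=0$ the effective coefficient vanishes, forcing $\bm{v}_k=\bm{0}$ and $z_k=0$, so the LMI collapses to the classical DRO constraint of \cref{dual}, consistent with the model degenerating to \cref{DRO}; and when $\bm{\Lambda}$ lies on the boundary of $\mathbb{S}^p_+$ the maximization over $\bm{\xi}$ is $+\infty$ unless $\bm{v}_k+\bm{q}-\bm{a}_k(\bm{x})$ lies in the range of $\bm{\Lambda}$, a condition that the LMI enforces automatically. Reinstating $r\ge 0$ as a decision variable and carrying the unchanged term $d_0 r$ through the objective then establishes the equivalence of \cref{trac2} and \cref{tracGDRC_2_Rp}, completing the proof.
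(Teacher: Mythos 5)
Your proof is correct, and its top-level reduction---observing that for fixed $r \ge 0$ the penalty $r\min_{\bm{\xi}'\in Y}\theta\|\bm{\xi}-\bm{\xi}'\|_2$ is the \cref{trac1}-type penalty with effective coefficient $r\theta$, then carrying the extra variable $r$ and the objective term $d_0 r$ through the reformulation---is exactly the route the paper intends: the paper omits the proof of \cref{thm3} entirely, saying only that it is ``similar to that of \cref{thm1}'', i.e., rerun the \cref{thm1} argument with $\theta$ replaced by $r\theta$. Where you genuinely diverge is in how the constraint-level equivalence is certified. The paper's proof of \cref{thm1} is an application of Ben-Tal et al.'s reformulation theorem (\cref{bental}): it instantiates $Z_1$, $Z_2=\mathbb{R}^p$, computes the support functions $\delta^*(\bm{A}^T\bm{v}|Z_i)$, the biconjugate $\phi^{**}(\bm{v},-\bm{v})$ (which is precisely where the dual-norm ball $\|\bm{v}_k\|_2\le r\theta$ comes from), and the concave conjugate $f_{k,*}$, and then homogenizes the resulting quadratic inequality into the LMI. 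You instead re-prove the needed special case of that theorem inline: Cauchy--Schwarz plus the ellipsoid support function for sufficiency, and Sion's minimax theorem for necessity, exchanging the supremum over $(\bm{\xi}',\bm{\eta})\in Y\times\mathbb{R}^p$ with the minimum over the compact ball $\{\|\bm{v}_k\|_2\le r\theta\}$---which is essentially the conjugate-duality argument hiding inside \cref{bental}. Your application of Sion is legitimate (joint concavity of $(\bm{\xi}',\bm{\eta})\mapsto f_k(\bm{\xi}'+\bm{\eta})-\bm{v}_k^T\bm{\eta}$ uses $\bm{\Lambda}\in\mathbb{S}^p_+$; the minimizing set is convex compact), and the subsequent separation of the supremum into the support function of $Y$ plus an unconstrained concave-quadratic maximization matches the paper's computation. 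The trade-off: the paper's route is shorter and inherits the generality of \cref{bental} (arbitrary closed jointly convex $\phi$, general $Z_1\subset Z_2$), while yours is self-contained, makes visible exactly which hypotheses license the minimax exchange, and treats the boundary cases ($r=0$, singular $\bm{\Lambda}$) explicitly rather than leaving them inside a cited black box.
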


\begin{theorem}\label{thm4}
Let $\phi(\bm{\xi},\bm{\xi}') = \theta \|\bm{\xi}-\bm{\xi}'\|_2$ for a given $\theta\ge 0$ and $Y$ and $\Xi$ be defined as \cref{bounded}. Under \cref{plinear}, \cref{trac2}
is equivalent to the following SDP problem when $\gamma_1 >0$, $\gamma_2 >0$ and $\bm{\Sigma}_0\in \mathbb{S}^p_{++}$,
\begin{equation}\label{tracGDRC_2_bounded}
\begin{split}
\min \limits_{\mbox{\tiny$\begin{array}{c}\bm{x},\bm{\Lambda},\bm{q},t,r,\\ \bm{v}_k,\bm{w}_k\\
z_{1k},z_{2k} \end{array}$}} &  t+\bm{\Lambda} \cdot (\gamma_2 \bm{\Sigma}_0 +\bm{\mu}_0 \bm{\mu}_0^{T})+ \sqrt{\gamma_1}||\bm{\Sigma}_0^{1/2} (\bm{q}+2\bm{\Lambda}\bm{\mu}_0)||_2 + \bm{q}^{T} \bm{\mu}_0 + d_0r \\
s.t.\quad &{ \left[\begin{array}{cc}
      \bm{\Lambda} & \frac{1}{2} (\bm{v}_k + \bm{w}_k +\bm{q}-\bm{a}_k(\bm{x}))\\
      \frac{1}{2}(\bm{v}_k + \bm{w}_k +\bm{q}-\bm{a}_k(\bm{x}))^{T} & t-b_k(\bm{x})-\bar{\bm{\mu}}^{T} (\bm{v}_k + \bm{w}_k) - z_{1k} - z_{2k} \\
      \end{array} \right]\in \mathbb{S}^{p+1}_+},\\
       & \sqrt{\bar{\gamma_1}} \|\bm{\Sigma}_0^{1/2}\bm{v}_k \|_2 \leq z_{1k}, \\
       & \sqrt{\bar{\gamma_2}} \|\bm{\Sigma}_0^{1/2}\bm{w}_k \|_2 \leq z_{2k}, \\
       & \|\bm{v}_k\|_2 \leq r\theta, \\
       & \bm{v}_{k}, \bm{w}_{k}\in \mathbb{R}^p, \quad z_{1k},z_{2k}\in\mathbb{R}, \quad 1\le k\le K,\\
       & \bm{\Lambda} \in \mathbb{S}^p_+, \quad \bm{q}\in \mathbb{R}^p, \quad t\in\mathbb{R}, \quad r \geq 0,\\
       & \bm{x} \in X\subseteq \mathbb{R}^n.
\end{split}
\end{equation}
\end{theorem}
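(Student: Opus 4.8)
The plan is to reformulate only the single semi-infinite constraint of \cref{trac2}, leaving the objective untouched, since the new parameter enters merely as the constant $d_0 r$ and $r\ge0$ is carried along as an extra decision variable. First I would invoke \cref{plinear} to write $h(\bm{x},\bm{\xi})=\max_k\{\bm{a}_k(\bm{x})^{T}\bm{\xi}+b_k(\bm{x})\}$, so that the constraint $h(\bm{x},\bm{\xi})-\bm{\xi}^{T}\bm{\Lambda}\bm{\xi}-\bm{q}^{T}\bm{\xi}-t\le r\min_{\bm{\xi}'\in Y}\phi(\bm{\xi},\bm{\xi}')$ for all $\bm{\xi}\in\Xi$ splits into $K$ constraints, one per affine piece $g_k(\bm{\xi}):=\bm{a}_k(\bm{x})^{T}\bm{\xi}+b_k(\bm{x})-\bm{\xi}^{T}\bm{\Lambda}\bm{\xi}-\bm{q}^{T}\bm{\xi}-t$. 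Because $r\ge0$ and $\theta\ge0$ give $r\phi(\bm{\xi},\bm{\xi}')=r\theta\|\bm{\xi}-\bm{\xi}'\|_2\ge0$, and because $a\le\min_y f(y)$ is equivalent to $a\le f(y)$ for every $y$, the $k$-th constraint is equivalent to $g_k(\bm{\xi})\le r\theta\|\bm{\xi}-\bm{\xi}'\|_2$ for all $\bm{\xi}\in\Xi$ and all $\bm{\xi}'\in Y$.

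Next I would dualize the Euclidean distance. Writing $r\theta\|\bm{z}\|_2=\max_{\|\bm{v}_k\|_2\le r\theta}\bm{v}_k^{T}\bm{z}$ and collecting the two quantifiers, the per-piece constraint becomes $\max_{\bm{\xi}\in\Xi,\,\bm{\xi}'\in Y}\min_{\|\bm{v}_k\|_2\le r\theta}\big[g_k(\bm{\xi})-\bm{v}_k^{T}(\bm{\xi}-\bm{\xi}')\big]\le0$. Since $\bm{\Lambda}\in\mathbb{S}^p_+$ makes $g_k$ concave, the bracket is concave in $(\bm{\xi},\bm{\xi}')$ and affine in $\bm{v}_k$, while $\Xi\times Y$ and the ball $\{\bm{v}_k:\|\bm{v}_k\|_2\le r\theta\}$ are convex and compact; Sion's minimax theorem then permits interchanging the operations, so the constraint holds iff there exists $\bm{v}_k$ with $\|\bm{v}_k\|_2\le r\theta$ and $\max_{\bm{\xi}\in\Xi}[g_k(\bm{\xi})-\bm{v}_k^{T}\bm{\xi}]+\max_{\bm{\xi}'\in Y}\bm{v}_k^{T}\bm{\xi}'\le0$, the two maxima decoupling because the summand separates in $\bm{\xi}$ and $\bm{\xi}'$. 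The second maximum is a linear program over the ellipsoid $Y$; by Cauchy–Schwarz it equals $\bm{v}_k^{T}\bar{\bm{\mu}}+\sqrt{\bar{\gamma}_1}\|\bm{\Sigma}_0^{1/2}\bm{v}_k\|_2$, which I would put in epigraph form with a variable $z_{1k}\ge\sqrt{\bar{\gamma}_1}\|\bm{\Sigma}_0^{1/2}\bm{v}_k\|_2$.

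The remaining term $\max_{\bm{\xi}\in\Xi}[g_k(\bm{\xi})-\bm{v}_k^{T}\bm{\xi}]$ is a concave quadratic maximized over the ellipsoid $\Xi$, and this is where the work lies. Rather than the S-lemma, which would produce a scalar multiplier, I would use the support function $\sigma_\Xi(\bm{w}_k)=\bm{w}_k^{T}\bar{\bm{\mu}}+\sqrt{\bar{\gamma}_2}\|\bm{\Sigma}_0^{1/2}\bm{w}_k\|_2$ together with the Fenchel identity $\max_{\bm{\xi}\in\Xi}f(\bm{\xi})=\min_{\bm{w}_k}\big[\max_{\bm{\xi}\in\mathbb{R}^p}\{f(\bm{\xi})-\bm{w}_k^{T}\bm{\xi}\}+\sigma_\Xi(\bm{w}_k)\big]$, which holds with zero duality gap since $f$ is concave and finite and $\Xi$ is compact. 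Introducing $z_{2k}\ge\sqrt{\bar{\gamma}_2}\|\bm{\Sigma}_0^{1/2}\bm{w}_k\|_2$, the condition becomes $\max_{\bm{\xi}\in\mathbb{R}^p}[-\bm{\xi}^{T}\bm{\Lambda}\bm{\xi}+(\bm{a}_k(\bm{x})-\bm{q}-\bm{v}_k-\bm{w}_k)^{T}\bm{\xi}]+b_k(\bm{x})-t+(\bm{v}_k+\bm{w}_k)^{T}\bar{\bm{\mu}}+z_{1k}+z_{2k}\le0$. The unconstrained maximum of the concave quadratic is finite iff its linear coefficient lies in the range of $\bm{\Lambda}$, in which case it equals a Schur-complement expression; applying the Schur complement to this scalar inequality yields exactly the linear matrix inequality of \cref{tracGDRC_2_bounded}, with top-left block $\bm{\Lambda}$, off-diagonal $\tfrac12(\bm{v}_k+\bm{w}_k+\bm{q}-\bm{a}_k(\bm{x}))$ (sign immaterial for positive semidefiniteness) and corner $t-b_k(\bm{x})-\bar{\bm{\mu}}^{T}(\bm{v}_k+\bm{w}_k)-z_{1k}-z_{2k}$.

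Collecting the $K$ matrix inequalities with the side constraints $\|\bm{v}_k\|_2\le r\theta$, $\sqrt{\bar{\gamma}_1}\|\bm{\Sigma}_0^{1/2}\bm{v}_k\|_2\le z_{1k}$ and $\sqrt{\bar{\gamma}_2}\|\bm{\Sigma}_0^{1/2}\bm{w}_k\|_2\le z_{2k}$, and retaining the unchanged objective with its $d_0 r$ term, reproduces \cref{tracGDRC_2_bounded}; the epigraph variables $z_{1k},z_{2k}$ are tight at optimality because they appear only in the corner of the matrix inequality, which they tighten, so nothing is lost. I expect the main obstacle to be the rigorous justification of the two interchange steps, and in particular the second one, where the inner maximization in $\bm{\xi}$ runs over the noncompact $\mathbb{R}^p$ so that Sion's theorem does not apply directly; there one must instead appeal to Fenchel/Lagrangian duality for the ellipsoid-constrained concave program, using the compactness of $\Xi$ and the concavity supplied by $\bm{\Lambda}\in\mathbb{S}^p_+$ to guarantee a zero gap and to have the matrix inequality encode the possibly singular $\bm{\Lambda}$ through its range condition. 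Finally, observing that this argument coincides with that of \cref{thm2} once $\theta$ is replaced by $r\theta$ and the constant $d_0 r$ is appended confirms that no new difficulty arises beyond carrying the nonnegative multiplier $r$ as an additional decision variable.
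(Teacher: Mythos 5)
Your proposal is correct, and it reaches exactly the reformulation \cref{tracGDRC_2_bounded}, but by a genuinely different route than the paper. The paper's proof (given for \cref{thm2} and declared ``similar'' for \cref{thm4}) is a direct application of the cited black-box result of Ben-Tal et al.\ (\cref{bental}): one identifies $Z_1,Z_2$ as Euclidean balls of radii $\sqrt{\bar\gamma_1},\sqrt{\bar\gamma_2}$, $\bm{A}=\bm{\Sigma}_0^{1/2}$, $f_k$ as the concave quadratic, computes the support functions $\delta^*(\bm{A}^T\bm{v}|Z_1)$, $\delta^*(\bm{A}^T\bm{w}|Z_2)$, the biconjugate $\phi^{**}(\bm{v},-\bm{v})$ (which yields $\|\bm{v}_k\|_2\le r\theta$ once $\phi$ is scaled by $r$), and the concave conjugate $f_{k,*}$, and then homogenizes to the LMI. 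You instead re-derive the content of that theorem from first principles in this special case: dualizing the norm over the ball $\{\|\bm{v}_k\|_2\le r\theta\}$, interchanging via Sion's minimax theorem (valid here since $\Xi\times Y$ and the ball are compact convex and the bracket is concave--affine), splitting off the ellipsoid $Y$ by its support function, handling the outer ellipsoid $\Xi$ by Fenchel duality with the support function $\sigma_\Xi$ (zero gap and dual attainment hold since the quadratic is finite-valued concave and $\Xi$ is compact), and finishing with the homogenization/Schur-complement equivalence. The variables $\bm{v}_k,\bm{w}_k,z_{1k},z_{2k}$ end up playing exactly the same roles in both arguments, so the two proofs are structurally parallel; what yours buys is self-containedness (no appeal to \cref{bental}, whose hypotheses on $Z_1,Z_2,\phi$ and $0\in ri(Z_1)$ you never need to verify) and an explicit treatment of the multiplier $r$, at the cost of having to justify the two interchange steps yourself --- which you do correctly, including the observation that the inner maximization over the noncompact $\mathbb{R}^p$ must be handled by Fenchel/Lagrangian duality rather than Sion. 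Your closing remark that the whole argument is the \cref{thm2} argument with $\theta$ replaced by $r\theta$ and $d_0r$ appended is precisely the paper's (unwritten) justification for omitting the proof.
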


For the two GDRO models with two settings \cref{unbounded,bounded} of $Y$ and $\Xi$, we get four SDP problems \cref{tracGDRC_1_Rp,tracGDRC_1_bounded,tracGDRC_2_Rp,tracGDRC_2_bounded}, which are polynomially solvable.
Then numerical experiments can be adopted to evaluate the optimal objective values of the problems influenced by the control parameter $\theta$ and the two settings.
The GDRO models with setting \cref{unbounded} are referred as the unbounded case, while the GDRO models with setting \cref{bounded} are referred as the bounded case in the following section.

\section{Numerical results}
The portfolio optimization problem is used in our numerical experiments which has been previously studied by Delage and Ye \cite{ye} in this section. Given $n$ options, $\bm{\xi}$ is a random vector of returns and $E_P[u(\bm{\xi}^T \bm{x})]$ is the expected utility with respect to the probability distribution $P$. Given the ambiguity set $\mathcal{D}$, the distributionally robust portfolio optimization model is written as
\begin{equation}\label{portfolio}
\max \limits_{\bm{x} \in X} \min \limits_{P \in \mathcal{D}} E_P[u(\bm{\xi}^T \bm{x})],
\end{equation}
where
$$X = \{\bm{x} \in \mathbb{R}^n | \sum \limits_{i=1}^n x_i = 1, \bm{x} \geq 0\}.$$

Here the same utility function as the one in \cite{ye} is considered, which is a piecewise linear function.
$$u(\bm{\xi}^T \bm{x})= \min \limits_{k \in \{1,2,\ldots,K\}} \{a_k \bm{\xi}^{T} \bm{x} + b_k\}.$$

Reformulate problem \cref{portfolio} as a standard form:
\begin{equation}
\min \limits_{\bm{x} \in X} \max \limits_{P \in \mathcal{D}} E_P[\max \limits_{k \in \{1,2,\ldots,K\}} - a_k \bm{\xi}^{T} \bm{x} - b_k].
\end{equation}

In our globalized distributionally robust optimization models, we set
$$\phi(\bm{\xi},\bm{\xi}') = \theta \| \bm{\xi}-\bm{\xi}' \|_2.$$

In the numerical experiments, all data are randomly generated without considering the real background of the portfolio optimization problem. We take $n = p = 10$, $K = 5$ and $a_k,\ b_k$ being data of independently uniform distribution on $(0,10)$. In the following subsections of numerical experiments, 30 samples are generated for the uncertain $\bm{\xi}$ of each configuration according to a given distribution and a small perturbation $N(\epsilon \bm{e}_p, 0.5\bm{\mathcal{I}}_p)$, where $\epsilon \in (-0.5,0.5)$, $\bm{e}_p \in \mathbb{R}^p$ is a vector of 1 and $\bm{\mathcal{I}}_p \in \mathbb{R}^{p \times p}$ is the identity matrix. The small perturbation added to the samples aims to make difference between the means and variances from the computation and given distribution.

The sample mean $\bm{\mu}_0$ and the sample covariance matrix $\bm{\Sigma}_0$ are computed based on the samples. Theoretical determination of $\gamma_1$ and $\gamma_2$ is given in Appendix B. Here $\gamma_1$ and $\gamma_2$ as defined by \cref{normal_gamma} with $\alpha = 0.05$ are used and computed in all numerical experiments due to the small sample size. The other parameters may have different settings in different experiments. For convenience, notations in \cref{model} are used to represent the models in the the following numerical experiments.

\begin{table}[tbhp]
\renewcommand\arraystretch{1.5}
\centering
\caption{Models in numerical experiments}\label{model}
\setlength{\tabcolsep}{2mm}{
\resizebox{\textwidth}{25mm}{
\begin{tabular}{c|ccccc}
\hline
notation & model &	sample space $\Xi$ & core set $Y$ & optimal objective value \\
\hline
DRO1 & $\min \limits_{\bm{x} \in X} \max \limits_{P \in \mathcal{D}} E_{P}[h(\bm{x},\bm{\xi})]$ & $\mathbb{R}^p$ & - & $v(DRO1)$ \\
DRO2 & $\min \limits_{\bm{x} \in X} \max \limits_{P \in \mathcal{D}} E_{P}[h(\bm{x},\bm{\xi})]$ & $\mathcal{E}(\gamma)$ & - & $v(DRO2,\gamma)$ \\
GDRO1.1 & $\min \limits_{\bm{x} \in X} \max \limits_{P \in \mathcal{D}} E_{P}[h(\bm{x},\bm{\xi})-\min \limits_{\bm{\xi}' \in Y} \phi(\bm{\xi},\bm{\xi}')]$ & $\mathbb{R}^p$ & $\mathcal{E}(\bar{\gamma})$ & $v(GDRO1.1,\bar{\gamma})$ \\
GDRO1.2 & $\min \limits_{\bm{x} \in X} \max \limits_{P \in \mathcal{D}} E_{P}[h(\bm{x},\bm{\xi})-\min \limits_{\bm{\xi}' \in Y} \phi(\bm{\xi},\bm{\xi}')]$ & $\mathcal{E}(\bar{\gamma}_2)$ & $\mathcal{E}(\bar{\gamma}_1)$ & $v(GDRO1.2,\bar{\gamma}_1,\bar{\gamma}_2)$ \\
GDRO2.1 & $\min \limits_{\bm{x} \in X} \max \limits_{P \in \mathcal{D}_1} E_{P}[h(\bm{x},\bm{\xi})]$
& $\mathbb{R}^p$ & $\mathcal{E}(\bar{\gamma})$ & $v(GDRO2.1,\bar{\gamma})$\\
GDRO2.2 & $\min \limits_{\bm{x} \in X} \max \limits_{P \in \mathcal{D}_1} E_{P}[h(\bm{x},\bm{\xi})]$
& $\mathcal{E}(\bar{\gamma}_2)$ & $\mathcal{E}(\bar{\gamma}_1)$ & $v(GDRO2.2,\bar{\gamma}_1,\bar{\gamma}_2)$ \\
\hline
\end{tabular}
}}
\begin{tablenotes}
\item \small \emph{Note}. $\mathcal{E}(\gamma) = \{\bm{\xi} \in \mathbb{R}^p |(\bm{\xi}-\bar{\bm{\mu}})^{T}\bm{\Sigma}_0^{-1}(\bm{\xi}-\bar{\bm{\mu}}) \leq \gamma \}$.
\end{tablenotes}
\end{table}

The influence of the parameter $\theta$ and the size of the core set or sample space on the optimal objective value is considered in \cref{theta-value,radius_unbounded,bounded_value}. Discussions on the determination of the core set, the sample space and the control parameters are presented in the \cref{data_based} based on the sample data.

\subsection{Selection of \texorpdfstring{$\theta$}{theta} and sample space}\label{theta-value}
The uniform distribution on the box $[0,20]^p$ is chosen as the true probability distribution. Then 30 samples with the small perturbation are generated. Let $\bar{\bm{\mu}} = \bm{\mu}_0$ and $\bar{\gamma} = \gamma_1$ in \cref{unbounded}, $\bar{\bm{\mu}} = \bm{\mu}_0$, $\bar{\gamma}_1 = \gamma_1$ and $\bar{\gamma}_2 = 100 \gamma_1$ in \cref{bounded} and $d_0=0.5$ in \cref{ambiguity2}. The variation of the optimal objective value of GDRO models with the parameter $\theta$ is shown in \cref{fig:theta}.

\begin{figure}[tbhp]
\centering
\subfloat[GDRO1]{\label{fig:gdro1}\includegraphics[scale=0.5]{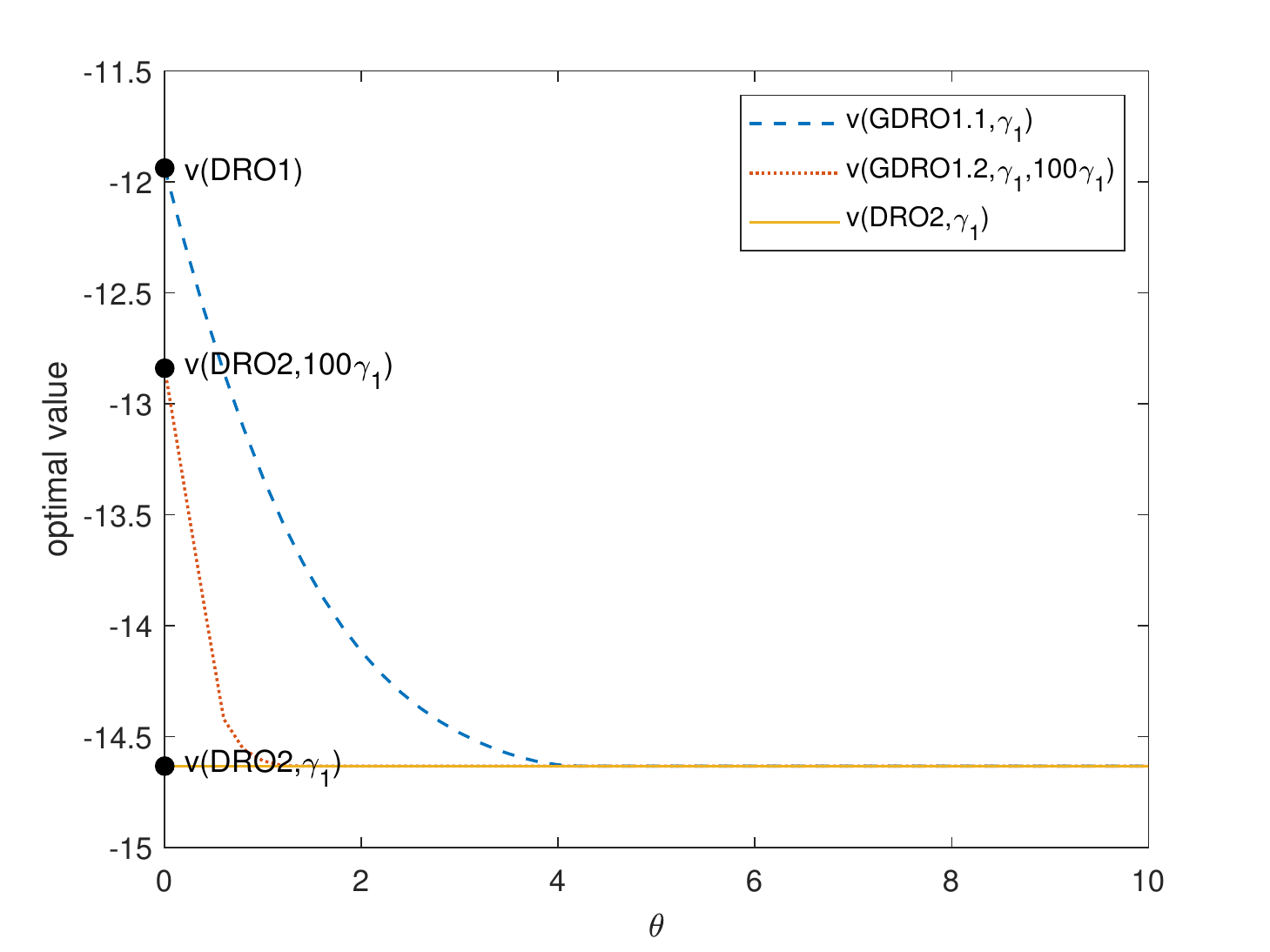}}
\subfloat[GDRO2]{\label{fig:gdro2}\includegraphics[scale=0.5]{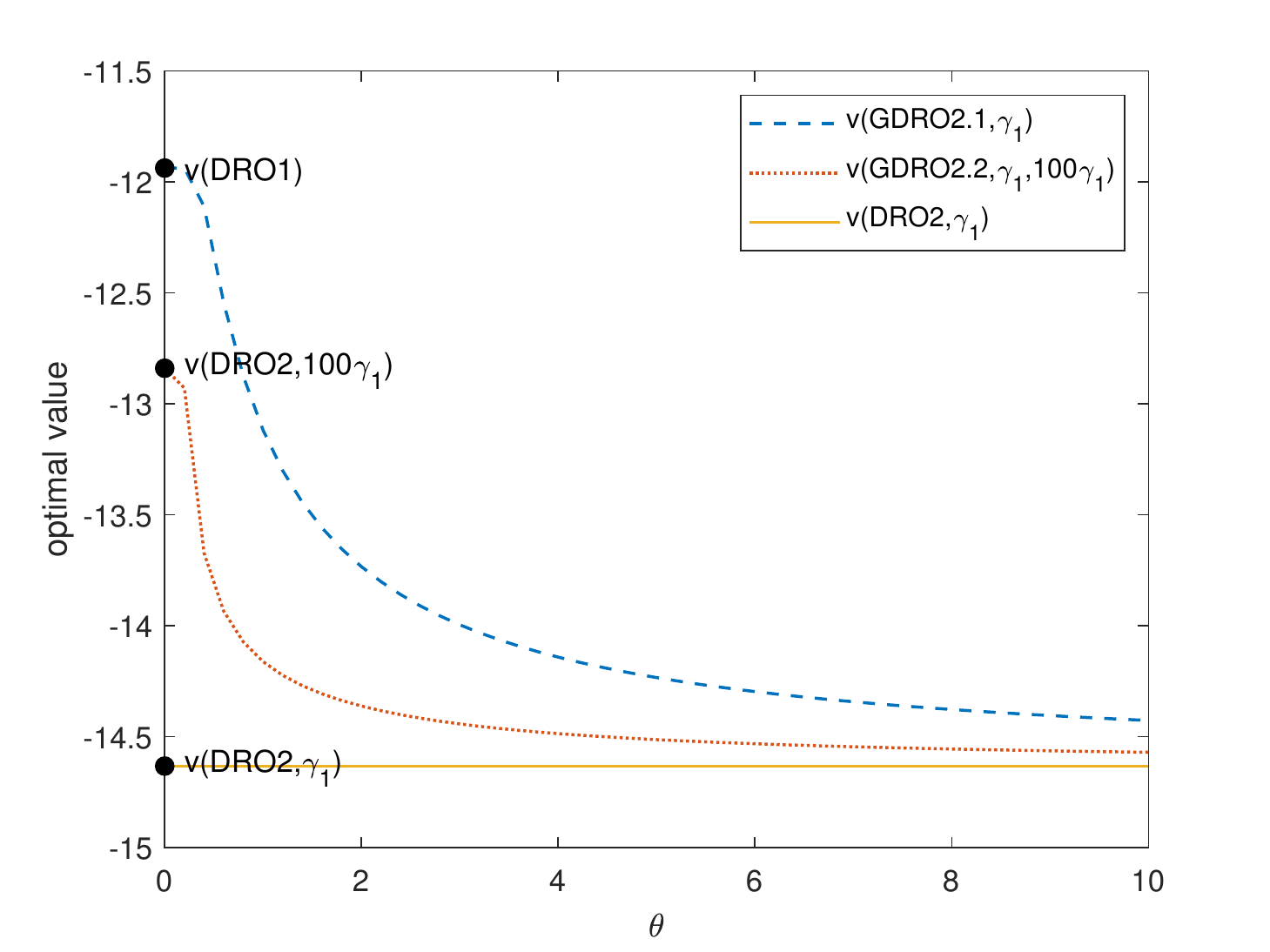}}
\caption{The optimal objective values of GDRO models via $\theta$}
\label{fig:theta}
\end{figure}


In \cref{fig:theta}, $v(DRO1)$, $v(DRO2,100\gamma_1)$ and $v(DRO2,\gamma_1)$ are optimal objective values of the DRO model with sample spaces $\mathbb{R}^p$, $\mathcal{E}(100\gamma_1)$ and  $\mathcal{E}(\gamma_1)$, respectively. Thus, it can be seen that the optimal objective value decreases obviously as the size of sample space decreases. The smaller the sample space is, the better the optimal objective value and the lower degree of conservatism are.

It is obvious that the optimal objective value of GDRO models is decreased with the increase of $\theta$. In \cref{fig:gdro1}, $v(GDRO1.1,\gamma_1)=v(DRO1)$ and $v(GDRO1.2,\gamma_1,100\gamma_1)=v(DRO2,100\gamma_1)$ when $\theta = 0$. Actually, GDRO1.1 degenerates into DRO1 and GDRO1.2 degenerates into DRO2 with $\gamma = 100 \gamma_1$ at this time. In other words, the core set does not work in this situation. When $\theta$ becomes large enough, $v(GDRO1.1,\gamma_1)=v(GDRO1.2,\gamma_1,100\gamma_1)=v(DRO2,\gamma_1)$. The worst case solution\footnote{The worst case solution of the inner maximization problem of \cref{GDRO1} is $\mathop{\arg\max} \limits_{P \in \mathcal{D}} E_{P}[h(\bm{x},\bm{\xi})-\min \limits_{\bm{\xi}' \in Y} \phi(\bm{\xi},\bm{\xi}')]$.} of the inner maximization problem of GDRO1 \cref{GDRO1} may be a probability measure $P$ whose support set is contained in the core set $Y=\mathcal{E}(\gamma_1)$. Hence, GDRO1.1 and GDRO1.2 will have the same performance as the model DRO2 with $\gamma = \gamma_1$ if $\theta$ is large enough.

\Cref{fig:gdro2} has the same trend as \cref{fig:gdro1}. In \cref{fig:gdro2}, it is also the same as that in \cref{fig:gdro1} when $\theta=0$. However, for the model GDRO2.1 and GDRO2.2, the optimal objective value will be closed to $v(DRO2,\gamma_1)$ but will never reach it with $\theta$ increasing.

The decision maker can control the degree of conservatism by choosing the sample space and the value of $\theta$.
For the optimal objective values of GDRO models, the supremum (i.e. the least conservative objective value) is $v(DRO2,\gamma_1)$ while the infimum (i.e. the most conservative objective value) is $v(DRO1)$ ($v(DRO2,100\gamma_1)$) for unbounded case (bounded case) in the range where the degree of conservatism can be adjusted. Besides, the gap between the supremum and the infimum determines the size of the adjustment range of $\theta$. It also can be seen that $v(GDRO1.1,\gamma_1)$ and $v(GDRO1.2,\gamma_1,100\gamma_1)$ reach a same value with a relatively smaller $\theta$ for the later. Therefore, the adjustment range of $\theta$ for the bounded case is smaller than that for the unbounded case.

Actually, the determination of $\theta$ in GDRO models depends on the optimal value range of the objective function and the preference of decision makers. For this instance, the optimal objective value of GDRO1.1 reaches $v(DRO2,\gamma_1)$ at $\theta=4.4$ and that of GDRO1.2 reaches $v(DRO2,\gamma_1)$ at $\theta=1.4$. For general problems, we can solve the following problem to get an appropriate value of $\theta$ for GDRO1. The same also works for GDRO2 by replacing $h(\bm{x},\bm{\xi})-\min_{\bm{\xi}' \in Y} \phi(\bm{\xi},\bm{\xi}')$ with $h(\bm{x},\bm{\xi})$ and the ambiguity set $\mathcal{D}$ \cref{ambiguity} with $\mathcal{D}_1$ \cref{ambiguity2}.
\begin{equation}\label{theta}
\begin{split}
\min & \quad \theta \\
s.t. & \quad \min \limits_{\bm{x} \in \mathcal{X}} \max \limits_{P \in \mathcal{D}} E_{P}[h(\bm{x},\bm{\xi})-\min \limits_{\bm{\xi}' \in Y} \phi(\bm{\xi},\bm{\xi}')]  \leq v + \varepsilon,
\end{split}
\end{equation}
where $v=v(DRO2,\bar{\gamma})$ and $\varepsilon$ depends on the preference of decision makers. Under our assumptions, \cref{theta} is also equivalently reformulated as a SDP model which is computationally tractable.

\subsection{Selection of core set \texorpdfstring{$Y$}{Y} under sample space \texorpdfstring{$\mathbb{R}^p$}{Rp}}\label{radius_unbounded}
In this subsection, the optimal objective values of GDRO1.1 and GDRO2.1 are used to show the influence of the radius of the core set when the sample space is $\mathbb{R}^p$. The optimal objective value of DRO2 is selected as a lower bound for that of GDRO1.1 and GDRO2.1.
Each set of 30 samples are got from each of the following three distributions with the small perturbation $N(\epsilon \bm{e}_p,0.5\bm{\mathcal{I}}_p)$, the normal $N(10 \bm{e}_p,\frac{100}{3} \bm{\mathcal{I}}_p)$, uniform on box $[0,20]^p$ and uniform on ellipsoid $\{\bm{\xi}|(\bm{\xi}-10 \bm{e}_p)^{T} \bm{\Sigma}^{-1} (\bm{\xi}-10 \bm{e}_p) \leq 200 \}$, respectively, where $\bm{\Sigma}$ is a positive definite matrix.

Let $\bm{\bar{\mu}}=\bm{\mu}_0$ and $\gamma$ and $\bar{\gamma}$ be 0.25, 0.5, 0.75, 1, 3, 25, 49, 100, 225, 400, 625 times of $\gamma_1$, respectively. We repeat the experiments 100 times and compute the average and the variance of optimal objective values. \Cref{normal,box,ellipsoid} show the experiment results of DRO2, GDRO1.1 with $\theta=4$, and GDRO2.1 with $d_0 = 0.5,\theta=4$. Besides, the average of $\theta$ solved by \cref{theta} with $\varepsilon = 0.01$ is presented in \cref{theta-radius}. The optimal objective value of DRO1 is not influenced by the radius because its sample space is the whole space. Hence, the average and variance of $v(DRO1)$ are recorded as a note under every table.

\begin{table}[th]
\centering
\caption{Normal distribution $N(10 \bm{e}_p,\frac{100}{3} \bm{\mathcal{I}}_p)$ }\label{normal}
\setlength{\tabcolsep}{1mm}{
\resizebox{\textwidth}{16mm}{
\begin{tabular}{c|c|cccccccccccc}
\hline
\multicolumn{2}{c|}{$\gamma/\gamma_1$,$\bar{\gamma}/\gamma_1$} & 0.25 &	0.5 & 0.75 & 1 & 9 & 25 & 49 & 100 & 225 &	400 & 625 \\
\hline
\multirow{3}*{Mean} & DRO2    & -14.3856 & -14.0043 & -13.7411 & -13.5306 & -13.5306 & -13.4130 & -11.6106 & -9.7738 & -9.4582 & -9.4538 & -9.4538 \\
~ & GDRO1.1 & -14.3607 & -13.9972 & -13.7383 & -13.5304 & -13.5304 & -13.4130 & -11.6106 & -9.7738 & -9.4582 & -9.4538 & -9.4538 \\
~ & GDRO2.1 & -13.9004 & -13.5552 & -13.3076 & -13.1463 & -13.1422 & -13.0290 & -11.3229 & -9.6769 & -9.4548 & -9.4538 & -9.4538 \\
\hline
\multirow{3}*{Var} & DRO2   & 0.2360 & 0.2333 & 0.2385 & 0.2465 & 0.2465 & 0.7192 & 4.9440 & 3.7592 & 2.7055 & 2.6813 & 2.6813 \\
~ & GDRO1.1  & 0.2184 & 0.2265 & 0.2353 & 0.2462 & 0.2461 & 0.7192 & 4.9440 & 3.7592 & 2.7055 & 2.6813 & 2.6813 \\
~ & GDRO2.1  & 0.2126 & 0.2186 & 0.2280 & 0.2394 & 0.2392 & 0.6867 & 4.5862 & 3.4638 & 2.6867 & 2.6813 & 2.6813 \\
\hline
\end{tabular}  }}
\begin{tablenotes}
\item \small \emph{Note}. $\text{mean}(v(DRO1)) = -9.4538$, $\text{var}(v(DRO1)) = 2.7084$.
\end{tablenotes}
\end{table}

\begin{table}[th]
\centering
\caption{Uniform distribution on box $[0,20]^p$}\label{box}
\setlength{\tabcolsep}{1mm}{
\resizebox{\textwidth}{16mm}{
\begin{tabular}{c|c|cccccccccccc}
\hline
\multicolumn{2}{c|}{$\gamma/\gamma_1$,$\bar{\gamma}/\gamma_1$} & 0.25 &	0.5 & 0.75 & 1 & 9 & 25 & 49 & 100 & 225 &	400 & 625 \\
\hline
\multirow{3}*{Mean} & DRO2    & -14.3539 & -13.9644 & -13.6961 & -13.4818 & -13.4818 & -13.2725 & -11.3096 & -9.4861 & -9.2181 & -9.2159 & -9.2159 \\
~ & GDRO1.1 & -14.3283 & -13.9559 & -13.6922 & -13.4813 & -13.4812 & -13.2721 & -11.3096 & -9.4861 & -9.2181 & -9.2159 & -9.2159 \\
~ & GDRO2.1 & -13.8785 & -13.5271 & -13.2750 & -13.1087 & -13.1050 & -12.9009 & -11.0399 & -9.4026 & -9.2161 & -9.2159 & -9.2159 \\
\hline
\multirow{3}*{Var} & DRO2      & 0.2193 & 0.2095 & 0.2140 & 0.2220 & 0.2220 & 1.4371 & 5.3689 & 3.8619 & 2.9436 & 2.9309 & 2.9309 \\
~ & GDRO1.1  & 0.1807 & 0.1940 & 0.2066 & 0.2209 & 0.2208 & 1.4358 & 5.3689 & 3.8619 & 2.9436 & 2.9309 & 2.9309 \\
~ & GDRO2.1  & 0.1817 & 0.1861 & 0.1961 & 0.2098 & 0.2094 & 1.3661 & 5.0032 & 3.6102 & 2.9322 & 2.9309 & 2.9309 \\
\hline
\end{tabular}  }}
\begin{tablenotes}
\item \small \emph{Note}. $\text{mean}(v(DRO1)) = -9.2159$, $\text{var}(v(DRO1)) = 2.9605$.
\end{tablenotes}
\end{table}

\begin{table}[th]
\centering
\caption{Uniform distribution on ellipsoid $\{\bm{\xi}|(\bm{\xi}-10 \bm{e}_p)^{T} \bm{\Sigma}^{-1} (\bm{\xi}-10 \bm{e}_p) \leq 200 \}$}\label{ellipsoid}
\setlength{\tabcolsep}{1mm}{
\resizebox{\textwidth}{16mm}{
\begin{tabular}{c|c|cccccccccccc}
\hline
\multicolumn{2}{c|}{$\gamma/\gamma_1$,$\bar{\gamma}/\gamma_1$} & 0.25 &	0.5 & 0.75 & 1 & 9 & 25 & 49 & 100 & 225 &	400 & 625 \\
\hline
\multirow{3}*{Mean} & DRO2 & -14.4224 & -14.2647 & -14.1542 & -14.0650 & -14.0650 & -14.0650 & -14.0650 & -14.0599 & -13.7621 & -13.4019 & -13.3045 \\
~ & GDRO1.1 & -14.4158 & -14.2625 & -14.1532 & -14.0649 & -14.0649 & -14.0648 & -14.0648 & -14.0596 & -13.7620 & -13.4019 & -13.3045 \\
~ & GDRO2.1 & -14.0284 & -13.8918 & -13.7908 & -13.7441 & -13.7369 & -13.7290 & -13.7201 & -13.7001 & -13.4563 & -13.2882 & -13.2844 \\
\hline
\multirow{3}*{Var} & DRO2     & 0.0669 & 0.0664 & 0.0677 & 0.0695 & 0.0695 & 0.0695 & 0.0695 & 0.0744 & 0.2595 & 0.2391 & 0.1957 \\
~ & GDRO1.1  & 0.0652 & 0.0660 & 0.0677 & 0.0696 & 0.0696 & 0.0696 & 0.0696 & 0.0745 & 0.2595 & 0.2391 & 0.1957 \\
~ & GDRO2.1  & 0.0647 & 0.0675 & 0.0702 & 0.0729 & 0.0731 & 0.0732 & 0.0735 & 0.0780 & 0.1975 & 0.1791 & 0.1764 \\
\hline
\end{tabular}  }}
\begin{tablenotes}
\item \small \emph{Note}. $\text{mean}(v(DRO1)) = -13.2844$, $\text{var}(v(DRO1)) = 0.1782$.
\end{tablenotes}
\end{table}

\begin{table}[thb]
\centering
\caption{$\theta$ solved by (\ref{theta})}\label{theta-radius}
\setlength{\tabcolsep}{2mm}{
\resizebox{\textwidth}{10mm}{
\begin{tabular}{c|cccccccccccc}
\hline
$\bar{\gamma}/\gamma_1$ & 0.25 & 0.5 & 0.75 & 1 & 9 & 25 & 49 & 100 & 225 &	400 & 625 \\
\hline
Normal & 3.9144 & 3.5688 & 3.4250 & 3.0183 & 3.0422 & 3.0010 & 2.2386 & 0.7087 & 0.0209 & 9.05E-10 & 1.06E-09 \\
Box & 3.8231 & 3.4642 & 3.3186 & 2.9223 & 2.9460 & 2.9046 & 2.0963 & 0.6029 & 0.0113 & 8.87E-10 & 1.17E-09 \\
Ellipsoid & 3.6027 & 3.3241 & 3.2136 & 2.8214 & 2.8517 & 2.8846 & 2.9208 & 2.9753 & 2.4270 & 0.9938 & 0.2171 \\
\hline
\end{tabular}   }}
\end{table}

In \cref{normal,box,ellipsoid}, the means of optimal objective values are all have the same non-decreased trend as $\bar{\gamma}$ increases, which means the larger radius of the core set is, the more conservative GDRO models are. The same trend under three different distributions is trivial because both DRO and GDRO models are not relative to the true distribution. Therefore, we can use GDRO1.1 or GDRO2.1 if it is uncertain that the sample space of the true probability distribution is bounded or not.

It is obvious that GDRO1.1 and GDRO2.1 both give smaller optimal objective values than DRO1. Hence, GDRO models can reduce the degree of conservatism compared with DRO1. Meanwhile, GDRO1.1 and GDRO2.1 provide smaller variances of the optimal objective values than DRO2 in most cases. Thus, GDRO models are less affected by the set of samples than DRO models.

In addition, $\theta$ solved by \cref{theta} in \cref{theta-radius} is non-increased and tends to 0 as $\bar{\gamma}$ increases. Therefore, we can control the degree of conservatism in GDRO models by adjusting $\theta$ only when the radius of $Y$ is not extremely large. Besides, the variances of the optimal objective values when $\bar{\gamma}/\gamma_1$ is large ($\bar{\gamma}/\gamma_1 \geq 25$ in \cref{normal,box}, $\bar{\gamma}/\gamma_1 \geq 225$ in \cref{ellipsoid}) are obviously larger than those when $\bar{\gamma}/\gamma_1$ is small for all three models. Thus, GDRO models with smaller core set is more robust with respect to samples. In summary, we should only pay attention to the core data while constructing the core set $Y$.

\subsection{Selection of core set under bounded sample space}\label{bounded_value}
In this subsection, the 30 samples are got from the uniform distribution on the box $[0,20]^p$ with the small perturbation $N(\epsilon \bm{e}_p,0.5\bm{\mathcal{I}}_p)$ in the numerical experiments. According to the numerical results in \cref{theta-value}, we still fix $d_0=0.5$ in \cref{ambiguity2} and choose $\theta=0.5$ here.

\begin{table}[tbhp]
\centering
\caption{Changing the radius of the sample space}\label{bounded_outer}
\setlength{\tabcolsep}{3mm}{
\resizebox{\textwidth}{16mm}{
\begin{tabular}{c|c|ccccccccc}
\hline
\multicolumn{2}{c|}{$\gamma/\gamma_1$,$\bar{\gamma}_2/\gamma_1$} & 1 & 9 & 25 & 49 & 100 & 225 &	400 & 625 \\
\hline
\multirow{3}*{Mean} & DRO2   & -13.5297 & -13.5297 & -13.2687 & -11.5556 & -9.6003 & -9.3258 & -9.3221 & -9.3219 \\
~ & GDRO1.2 & -13.5297 & -13.5297 & -13.4770 & -13.0307 & -11.3912 & -10.7567 & -10.7358 & -10.7346 \\
~ & GDRO2.2 & -13.5297 & -13.5297 & -13.4927 & -13.1064 & -12.3795 & -11.8132 & -11.5382 & -11.3797 \\
\hline
\multirow{3}*{Var} & DRO2     & 0.2723 & 0.2723 & 2.1182 & 6.0838 & 4.0087 & 3.2526 & 3.2286 & 3.2270 \\
~ & GDRO1.2  & 0.2723 & 0.2723 & 0.5422 & 2.3658 & 2.7478 & 1.8478 & 1.7785 & 1.7733 \\
~ & GDRO2.2  & 0.2723 & 0.2723 & 0.3633 & 0.6044 & 0.5106 & 0.4000 & 0.3495 & 0.3276 \\
\hline
\end{tabular}  }}
\end{table}

\begin{table}[tbhp]
\centering
\caption{Changing the radius of the core set}\label{bounded_inner}
\setlength{\tabcolsep}{3mm}{
\resizebox{\textwidth}{25mm}{
\begin{tabular}{c|c|cccccccc}
\hline
\multicolumn{2}{c|}{$\gamma/\gamma_1$,$\bar{\gamma}_1/\gamma_1$} & 0.25 & 0.5 & 0.75 & 1 & 9 & 25 & 49 & 100  \\
\hline
\multirow{5}*{Mean} & DRO2    & -14.3210 & -13.9231 & -13.6490 & -13.4306 & -13.4306 & -13.2118 & -11.0446 & -9.2762 \\
~ & GDRO1.1 & -10.5824 & -10.5352 & -10.5159 & -10.4996 & -10.2376 & -9.9418 & -9.6131 & -9.1724 \\
~ & GDRO2.1 & -11.6007 & -11.2977 & -11.1412 & -11.1330 & -10.9953 & -10.6388 & -9.2100 & -9.0303 \\
~ & GDRO1.2 & -10.5919 & -10.5568 & -10.5364 & -10.5192 & -10.2476 & -9.9466 & -9.6154 & -9.1730 \\
~ & GDRO2.2 & -12.2025 & -11.8725 & -11.7011 & -11.6830 & -11.3672 & -10.7681 & -9.2130 & -9.0318 \\
\hline
\multirow{5}*{Var} & DRO2     & 0.1920 & 0.1869 & 0.1924 & 0.2013 & 0.2013 & 0.8199 & 5.4549 & 3.7923 \\
~ & GDRO1.1  & 1.5599 & 1.5212 & 1.5425 & 1.5608 & 1.8819 & 2.3088 & 2.8588 & 3.2493 \\
~ & GDRO2.1  & 0.2188 & 0.2498 & 0.2974 & 0.2978 & 0.3032 & 0.8085 & 2.9614 & 2.8832 \\
~ & GDRO1.2  & 1.6007 & 1.6028 & 1.6211 & 1.6370 & 1.9276 & 2.3347 & 2.8726 & 3.2532 \\
~ & GDRO2.2  & 0.2608 & 0.2922 & 0.3448 & 0.3466 & 0.3804 & 0.9548 & 2.9771 & 2.8928 \\
\hline
\end{tabular}
}}
\end{table}

In \cref{bounded_outer}, we fix the size $\bar\gamma_1$ of the core set $\mathcal{E}(\bar\gamma_1)$ and change the size $\bar\gamma_2$ of the sample space $\mathcal{E}(\bar\gamma_2)$  in GDRO1.2 and GDRO2.2. Besides, the radius of the sample space $\mathcal{E}(\gamma)$ of DRO2 is also changed accordingly. Let $\bar{\bm{\mu}} = \bm{\mu}_0$, $\bar{\gamma}_1 = \gamma_1$ and $\gamma$ and $\bar{\gamma}_2$ be 1, 9, 25, 49, 100, 225, 400, 625 times of $\gamma_1$, respectively. We repeat the experiments 100 times and compute the average and the variance of optimal objective values for each configuration.

Obviously, the optimal objective value increases with the increase of the radius of the sample space for all three models in \cref{bounded_outer}. However, GDRO models provide smaller optimal values and hence reduce the degree of conservatism significantly, compared with the DRO model with the same sample space. Besides, GDRO models also provide smaller variances, meaning more robust with respect to different sets of samples than the DRO model.

When the radius of the sample space is small ($\bar{\gamma}_2/\gamma_1 \leq 25$), there is little difference between the objective values of GDRO1.2 and GDRO2.2. The difference between GDRO1.2 and GDRO2.2 becomes more obvious with the increase of the radius. Generally, GDRO2.2 provides smaller optimal values and smaller variances and shows better performance than GDRO1.2. Probably the reason is that GDRO2.2 is influenced by $d_0$, which also controls the degree of conservatism and robustness in GDRO2.2.

In \cref{bounded_inner}, we fix the size of the sample space ($\mathbb{R}^p$ for GDRO1.1 and GDRO2.1, and  $\mathcal{E}(\bar\gamma_2)$ for GDRO1.2 and GDRO2.2) and change the radius ($\sqrt{\bar\gamma}$ for GDRO1.1 and GDRO2.1, and $\sqrt{\bar\gamma_1}$ for GDRO1.2 and GDRO2.2) of the core set in all four GDRO models. Besides, the radius of the sample space $\mathcal{E}(\gamma)$ of DRO2 is also changed accordingly. Let $\bar{\bm{\mu}} = \bm{\mu}_0$, $\bar{\gamma}_2 = 225\gamma_1$ and $\gamma$, $\bar{\gamma}$ and $\bar{\gamma}_1$ be 0.25,0.5,0.75,1,9,25,49,100 times of $\gamma_1$, respectively. We repeat the experiments 100 times and compute the average and the variance of optimal objective values.

Obviously, the optimal objective value increases with the increase of the radius of the core set for all four GDRO models in \cref{bounded_inner}. The DRO model is the least conservative because the sample space in the DRO model is the same as the core set in GDRO models. It can be seen that the bounded GDRO models (GDRO1.2 and GDRO2.2) are less conservative and more robust with respect to samples than the unbounded cases (GDRO1.1 and GDRO2.1), which shows that the sample space still play an important role in modelling. The variances of optimal objective values of GDRO models are worse than those of DRO2 because the radius of the sample space of GDRO models is larger than that of DRO2. Similar to that in \cref{radius_unbounded}, GDRO models with smaller core set is also more robust with respect to samples in the bounded case.

Therefore, a smaller core set and sample space are preferred in order to achieve a lower degree of conservatism.

\subsection{Data-based GDROs}\label{data_based}
Obviously, all sample data are bounded in real world. If a bounded sample space is selected in the DRO modelling, the probability distribution with unbounded sample space is excluded, such as the normal distribution. Therefore, GDRO is an efficient tool which not only includes the probability distribution with the whole space as the sample space, but also controls the degree of conservatism by constructing the core set and adjusting the parameter $\theta$ or $d_0$.

When an unbounded sample space is considered for our GDRO modelling, the determination of the core set of samples is needed first. The minimal ellipsoid containing all samples with the variance matrix of the samples is used in our modelling. The following example shows our procedure of handling the portfolio optimization model \cref{portfolio}. Here the normal distribution $N(10 \bm{e}_p,\frac{100}{3} \bm{\mathcal{I}}_p)$ is chosen as the true probability distribution and 30 samples are generated accordingly. For each sample, a small perturbation $N(\epsilon \bm{e}_p,0.5\bm{\mathcal{I}}_p)$ is added to the sample where $\epsilon \in (-0.5,0.5)$. The mean $\bm{\mu}_0$ and the covariance matrix $\bm{\Sigma}_0$ of the samples are thus computed. Then the smallest $\gamma$ satisfying that all samples are included in the ellipsoid $\{\bm{\xi} \in \mathbb{R}^p |(\bm{\xi}-\bm{\mu}_0)^{T}\bm{\Sigma}_0^{-1}(\bm{\xi}-\bm{\mu}_0) \leq \gamma \}$ is computed, denoted as $\tilde{\gamma}$. If $\theta=4$ and $d_0=0.5$ are fixed, then solve DRO1, DRO2, GDRO1.1 and GDRO2.1. The means and variances of optimal objective values are shown in \cref{ubd}.

\begin{table}[tbhp]
\centering
\caption{Unbounded case}\label{ubd}
\resizebox{\textwidth}{7mm}{
\begin{tabular}{cccccccc}
\hline
 & v(DRO1)& v(DRO2,$\tilde{\gamma}$) & v(DRO2,25$\tilde{\gamma}$) & v(GDRO1.1,$\tilde{\gamma}$) & v(GDRO1.1,$25\tilde{\gamma}$) & v(GDRO2.1,$\tilde{\gamma}$) & v(GDRO2.1,25$\tilde{\gamma}$) \\
\hline
Mean & -8.9815 & -13.4145 & -8.9822 & -13.4145 & -8.9822 & -13.0301 & -8.9816 \\
Var & 3.8781 & 0.2573 & 3.8838 & 0.2573 & 3.8838 & 0.2542 & 3.8786 \\
\hline
\end{tabular}}
\end{table}
Due to the big gap between $v(DRO1)$ and $v(DRO2,\tilde{\gamma})$, GDRO models make sense in application. Now there is a large alternative range for $\theta$ to control the degree of conservatism when the core set is set $\mathcal{E}(\tilde\gamma)=\{\bm{\xi} \in \mathbb{R}^p |(\bm{\xi}-\bm{\mu}_0)^{T}\bm{\Sigma}_0^{-1}(\bm{\xi}-\bm{\mu}_0) \leq \tilde\gamma \}$. Here $\theta=4$ in GDRO models is fixed for our convenience of repeating the experiments. Actually, decision makers can solve \cref{theta} to get a $\theta$ in line with their own risk preference. If the core set is selected as $\mathcal{E}(25\tilde{\gamma})$, then GDRO models are useless as the gap between $v(DRO1)$ and $v(DRO2,25\tilde{\gamma})$ is too small.

Suppose that it is known the sample space is bounded. Here the uniform distribution on the box $[0,20]^p$ is chosen as the true probability distribution and 30 samples with the small perturbation as above are generated accordingly. Then the smallest $\gamma$ satisfying that all samples are included in the ellipsoid $\{\bm{\xi} \in \mathbb{R}^p |(\bm{\xi}-\bm{\mu}_0)^{T}\bm{\Sigma}_0^{-1}(\bm{\xi}-\bm{\mu}_0) \leq \gamma \}$ is computed, denoted as $\tilde{\gamma}$. Let $\theta=0.5$ and $d_0=0.5$. The means and variances of optimal values are shown in \cref{bd}.

\begin{table}[ht]
\centering
\caption{Bounded case}\label{bd}
\resizebox{\textwidth}{7mm}{
\begin{tabular}{ccccccccc}
\hline
 & v(DRO1)& v(DRO2,$\tilde{\gamma}$) & v(DRO2,4$\tilde{\gamma}$) & v(DRO2,25$\tilde{\gamma}$) & v(GDRO1.2,$\tilde{\gamma}$,4$\tilde{\gamma}$) & v(GDRO1.2,$\tilde{\gamma}$,$25\tilde{\gamma}$) & v(GDRO2.2,$\tilde{\gamma}$,4$\tilde{\gamma}$) & v(GDRO2.2,$\tilde{\gamma}$,25$\tilde{\gamma}$) \\
\hline
Mean & -9.2564 & -13.5379 & -11.0322 & -9.2564 & -12.0367 & -10.3411 & -12.5736 & -11.2220 \\
Var & 2.8434 & 0.1745 & 5.7760 & 2.8434 & 3.6655 & 1.9666 & 1.0293 & 0.2975 \\
\hline
\end{tabular}}
\end{table}

We observe that $v(DRO2,25\tilde{\gamma})$ is extremely similar to $v(DRO1)$. It is why we add the case $4\tilde{\gamma}$ in Table \ref{bd}. For the two sample spaces of different sizes, the gap between $v(GDRO1.2,\tilde{\gamma},4\tilde{\gamma})$ and $v(GDRO1.2,\tilde{\gamma},25\tilde{\gamma})$ or $v(GDRO2.2,\tilde{\gamma},4\tilde{\gamma})$ and $v(GDRO2.2,\tilde{\gamma},25\tilde{\gamma})$ is different. Therefore, it is essential to estimate an appropriate sample space for the bounded case. If a bounded sample space cannot be estimated very well, the unbounded or a large bounded sample space like $\mathcal{E}(25\tilde{\gamma})$ with the GDRO modelling is suggested in application. In fact, there is little difference between the results of bounded and unbounded cases if the bounded sample space is large enough.

In summary, the decision maker can determine the core set $Y$ and the sample space $\Xi$ firstly, then compute the optimal values of DRO with $Y$ and $\Xi$ as the sample space, respectively. If the gap between the two values is extremely small, there is no need to use the GDRO models. Conversely, GDRO models are good choice in controlling the degree of conservatism.

\section{Conclusions}
In this paper, we introduce the globalized distributionally robust optimization and derive the corresponding tractable GDRO models under the assumption that the objective function is piecewise linear. Compared with DRO, the GDRO approach allows wider range of the sample space and controls the degree of conservatism at the same time. Besides, the GDRO approach is more flexible in adjustment of conservatism by adjusting the parameter $\theta$.
In application, the decision maker can determine the core set $Y$ and the sample space $\Xi$ firstly. If there is no much priori information of sample space, the unbounded case $\Xi=\mathbb{R}^p$ is suggested. Then the optimal objective values of DRO with $Y$ and $\Xi$ as the sample space can be computed respectively. If the gap between the two values is extremely small, DRO may be enough to get a satisfying result. If the gap is large, the decision maker can determine an appropriate $\theta$ by \cref{theta} and solve the corresponding GDRO model.

\appendix
\section{Proofs}
\subsection{Proof of \texorpdfstring{\cref{dual}}{lemma2.1}}
\begin{proof}
We consider the inner maximization problem of \cref{DRO} with the ambiguity set \cref{ambiguity} and rewrite it as
\begin{equation}\label{P}
\begin{split}
\max \limits_{P \in C(\mathcal{P})} \quad & E_{P}[h(\bm{x},\bm{\xi})]  \\
s.t. \quad & E_{P}[(\bm{\xi}-\bm{\mu}_0)(\bm{\xi}-\bm{\mu}_0)^{T}] \preceq \gamma_2\bm{\Sigma}_0, \\
     & E_{P}\begin{bmatrix}
                           -\bm{\Sigma}_0 & (\bm{\mu_0}-\bm{\xi}) \\
                           (\bm{\mu_0}-\bm{\xi})^{T} & -\gamma_1 \\
                           \end{bmatrix}  \preceq 0, \\
     & E_{P}[\mathbb{I}_{\bm{\xi} \in \Xi}] = 1,
\end{split}
\end{equation}
where $C(\mathcal{P})$ is the cone generated by $\mathcal{P}$ and $\mathbb{I}_{\bm{\xi} \in \Xi}$ is the indicator function of the set $\Xi$, defined as
\begin{equation*}
   \mathbb{I}_{\bm{\xi} \in \Xi} (\bm{\xi})=
   \left\{ \begin{aligned}  1 \quad &\mathrm{if} \quad \bm{\xi} \in \Xi, \\  0 \quad & \mathrm{otherwise}. \end{aligned}\right.
\end{equation*}

For the given $\bm{\Lambda}_1 \in \mathbb{S}_+^p$, $\bigl[ \begin{smallmatrix} \bm{\Lambda}_2 & \bm{\beta} \\ \bm{\beta}^{T} & s \\ \end{smallmatrix} \bigr] \in \mathbb{S}_+^{p+1}$ and $t \in \mathbb{R}$, the Lagrangian function of \cref{P} is
\begin{equation*}
\begin{aligned}
&L \left( \bm{\Lambda}_1,\begin{bmatrix} \bm{\Lambda}_2 & \bm{\beta} \\ \bm{\beta}^T & s \\ \end{bmatrix},t,P \right)\\
= \quad & E_{P}[h(\bm{x},\bm{\xi})]  + \bm{\Lambda}_1 \cdot (\gamma_2 \bm{\Sigma}_0 - E_{P}[(\bm{\xi}-\bm{\mu}_0)(\bm{\xi}-\bm{\mu}_0)^{T}]) \\
&  + \begin{bmatrix} \bm{\Lambda}_2 & \bm{\beta} \\ \bm{\beta}^T & s \\  \end{bmatrix}  \cdot E_{P} \begin{bmatrix} \bm{\Sigma}_0 & (\bm{\xi}-\bm{\mu_0}) \\ (\bm{\xi}-\bm{\mu_0})^{T} & \gamma_1 \\ \end{bmatrix}  + t(1-E_{P}[\mathbb{I}_{\bm{\xi} \in \Xi}]) \\
= \quad & \bm{\Lambda}_1 \cdot (\gamma_2 \bm{\Sigma}_0 -\bm{\mu}_0 \bm{\mu}_0^{T})+\bm{\Lambda}_2 \cdot \bm{\Sigma}_0+ \gamma_1 s - 2\bm{\mu}_0^{T} \bm{\beta} + t \\
&  + E_P[h(\bm{x},\bm{\xi}) - \bm{\xi}^{T}\bm{\Lambda}_1 \bm{\xi} + 2\bm{\xi}^{T}\bm{\Lambda}_1 \bm{\mu_0} + 2\bm{\beta}^{T}\bm{\xi} - t \mathbb{I}_{\bm{\xi} \in \Xi}]. \\
\end{aligned}
\end{equation*}

And
\begin{equation*}
\begin{aligned}
&\max \limits_{P \in C(\mathcal{P})}  L \left(\bm{\Lambda}_1,\begin{bmatrix} \bm{\Lambda}_2 & \bm{\beta} \\ \bm{\beta}^T & s \\ \end{bmatrix},t,P \right)\\
= \quad & \bm{\Lambda}_1 \cdot (\gamma_2 \bm{\Sigma}_0 -\bm{\mu}_0 \bm{\mu}_0^{T})+\bm{\Lambda}_2 \cdot \bm{\Sigma}_0+ \gamma_1 s - 2\bm{\mu}_0^{T} \bm{\beta} + t \\
& + \max \limits_{P \in C(\mathcal{P})} E_P[h(\bm{x},\bm{\xi}) - \bm{\xi}^{T}\bm{\Lambda}_1 \bm{\xi} + 2\bm{\xi}^{T}\bm{\Lambda}_1 \bm{\mu_0} + 2\bm{\beta}^{T}\bm{\xi} - t \mathbb{I}_{\bm{\xi} \in \Xi}] \\
= \quad & \left\{ \begin{aligned} & \bm{\Lambda}_1 \cdot (\gamma_2 \bm{\Sigma}_0 -\bm{\mu}_0 \bm{\mu}_0^{T})+\bm{\Lambda}_2 \cdot \bm{\Sigma}_0+ \gamma_1 s - 2\bm{\mu}_0^{T} \bm{\beta} + t \\
& \qquad  \text{if} \quad  E_P[h(\bm{x},\bm{\xi}) - \bm{\xi}^{T}\bm{\Lambda}_1 \bm{\xi} + 2\bm{\xi}^{T}\bm{\Lambda}_1 \bm{\mu_0} + 2\bm{\beta}^{T}\bm{\xi} - t \mathbb{I}_{\bm{\xi} \in \Xi}] \leq 0 \qquad \forall P \in C(\mathcal{P}), \\
& \infty \quad \text{otherwise},
\end{aligned}\right.
\end{aligned}
\end{equation*}
where the constraint
\begin{equation*}
\begin{split}
& E_P[h(\bm{x},\bm{\xi}) - \bm{\xi}^{T}\bm{\Lambda}_1 \bm{\xi} + 2\bm{\xi}^{T}\bm{\Lambda}_1 \bm{\mu_0} + 2\bm{\beta}^{T}\bm{\xi} - t \mathds{1}_{\bm{\xi} \in \Xi}] \leq 0 \qquad \forall P \in C(\mathcal{P}) \\
\Longleftrightarrow \quad
& E_P[h(\bm{x},\bm{\xi}) - \bm{\xi}^{T}\bm{\Lambda}_1 \bm{\xi} + 2\bm{\xi}^{T}\bm{\Lambda}_1 \bm{\mu_0} + 2\bm{\beta}^{T}\bm{\xi} - t \mathds{1}_{\bm{\xi} \in \Xi}] \leq 0 \qquad \forall P \in \mathcal{P} \\
\Longleftrightarrow \quad
& h(\bm{x},\bm{\xi})-\bm{\xi}^{T}\bm{\Lambda}_1 \bm{\xi}+2\bm{\xi}^{T}\bm{\Lambda}_1 \bm{\mu_0}+2\bm{\beta}^{T}\bm{\xi} \leq t \qquad \forall \bm{\xi} \in \Xi.
\end{split}
\end{equation*}

Therefore, the Lagrangian dual of \cref{P} is written as
\begin{equation}\label{D}
\begin{split}
\min \limits_{\bm{\Lambda}_1,\bm{\Lambda}_2,\bm{\beta},s,t} \quad & \bm{\Lambda}_1 \cdot (\gamma_2 \bm{\Sigma}_0 -\bm{\mu}_0 \bm{\mu}_0^{T})+\bm{\Lambda}_2 \cdot \bm{\Sigma}_0+ \gamma_1 s - 2\bm{\mu}_0^{T} \bm{\beta} + t \\
s.t. \quad & h(\bm{x},\bm{\xi})-\bm{\xi}^{T}\bm{\Lambda}_1 \bm{\xi}+2\bm{\xi}^{T}\bm{\Lambda}_1 \bm{\mu_0}+2\bm{\beta}^{T}\bm{\xi} \leq t \qquad \forall \bm{\xi} \in \Xi, \\
& \bm{\Lambda}_1 \in \mathbb{S}_+^p, \quad \begin{bmatrix} \bm{\Lambda}_2 & \bm{\beta} \\ \bm{\beta}^{T} & s \\ \end{bmatrix}  \in \mathbb{S}_+^{p+1}, \quad t \in \mathbb{R}.
\end{split}
\end{equation}

We can simplify the dual problem by solving analytically for the variables $(\bm{\Lambda}_2,\bm{\beta},s)$ with fixed $(\bm{\Lambda}_1,t)$. Denote $(\bm{\Lambda}^*_2,\bm{\beta}^*,s^*)$ as the optimal solution of \cref{D} with fixed $(\bm{\Lambda}_1,t)$.

\begin{itemize}
\item If $s^*>0$, then $\bm{\Lambda}^*_2 \succeq \frac{1}{s^*} \bm{\beta}^* \bm{\beta}^{*T}$ by $\bigl[ \begin{smallmatrix} \bm{\Lambda}^*_2 & \bm{\beta}^* \\ \bm{\beta}^{*T} & s^* \\ \end{smallmatrix} \bigr] \in \mathbb{S}_+^{p+1}$ according to Schur's complement \cite{conv_opt_book}.

The objective function gets its minimum at $\bm{\Lambda}_2^* = \frac{1}{s^*} \bm{\beta}^* \bm{\beta}^{*T}$. Then the objective function becomes
$$t+\bm{\Lambda}_1 \cdot (\gamma_2 \bm{\Sigma}_0 -\bm{\mu}_0 \bm{\mu}_0^{T})+\frac{1}{s^*} \bm{\beta}^{*T} \bm{\Sigma}_0 \bm{\beta}^*+ \gamma_1 s^* - 2\bm{\mu}_0^{T}\bm{\beta}^*.$$

Since $\frac{1}{s^*} \bm{\beta}^{*T} \bm{\Sigma}_0 \bm{\beta}^*+ \gamma_1 s^* \geq 2\sqrt{\gamma_1 \bm{\beta}^{*T} \bm{\Sigma}_0 \bm{\beta}^*}$ and the equality holds if and only if $s^*=\sqrt{\bm{\beta}^{*T} \bm{\Sigma}_0 \bm{\beta}^*/\gamma_1}$, the objective function becomes
$$t+\bm{\Lambda}_1 \cdot (\gamma_2 \bm{\Sigma}_0 +\bm{\mu}_0 \bm{\mu}_0^{T})+ \sqrt{\gamma_1}||\bm{\Sigma}_0^{1/2} (\bm{q}+2\bm{\Lambda}_1\bm{\mu}_0)||_2 + \bm{q}^{T} \bm{\mu}_0$$
with $\bm{q}=-2(\bm{\beta}^*+\bm{\Lambda}_1 \bm{\mu}_0)$.
\item If $s^*=0$, then $\bm{\beta}^*=0$ to keep $\bigl[ \begin{smallmatrix} \bm{\Lambda}^*_2 & \bm{\beta}^* \\ \bm{\beta}^{*T} & s^* \\ \end{smallmatrix} \bigr]   \in \mathbb{S}_+^{p+1}$.

The objective function gets its minimum at $\bm{\Lambda}_2^*=0$ due to $\bm{\Sigma}_0 \succeq 0$.

Let $\bm{q}=-2(\bm{\beta}^*+\bm{\Lambda}_1 \bm{\mu}_0)$. The objective function is written as
\begin{equation*}
\begin{split}
& t+\bm{\Lambda}_1 \cdot (\gamma_2 \bm{\Sigma}_0 -\bm{\mu}_0 \bm{\mu}_0^{T}) \\
= \quad & t+\bm{\Lambda}_1 \cdot (\gamma_2 \bm{\Sigma}_0 +\bm{\mu}_0 \bm{\mu}_0^{T})+ \sqrt{\gamma_1}||\bm{\Sigma}_0^{1/2} (\bm{q}+2\bm{\Lambda}_1\bm{\mu}_0)||_2 + \bm{q}^{T} \bm{\mu}_0.
\end{split}
\end{equation*}
\end{itemize}

Thus, the dual of \cref{P} can be equivalently rewritten as
\begin{equation}\label{D2}
\begin{split}
\min \limits_{\bm{\Lambda},\bm{q},t} \quad & t+\bm{\Lambda} \cdot (\gamma_2 \bm{\Sigma}_0 +\bm{\mu}_0 \bm{\mu}_0^{T})+ \sqrt{\gamma_1}||\bm{\Sigma}_0^{1/2} (\bm{q}+2\bm{\Lambda} \bm{\mu}_0)||_2 + \bm{q}^{T} \bm{\mu}_0 \\
s.t. \quad & h(\bm{x},\bm{\xi})-\bm{\xi}^{T}\bm{\Lambda} \bm{\xi} - \bm{q}^{T} \bm{\xi} \leq t \qquad \forall \bm{\xi} \in \Xi, \\
& \bm{\Lambda} \in \mathbb{S}_+^p, \quad \bm{q} \in \mathbb{R}^p, \quad t \in \mathbb{R}.
\end{split}
\end{equation}

According to Proposition 2.8 in Shapiro\cite{Shapiro}, we can conclude that there is no dual gap if $\gamma_1,\gamma_2>0$, $\bm{\Sigma}_0 \succ 0$ and $h(\bm{x},\bm{\xi})$ is integrable for all $P\in \mathcal{D}$. Besides, the optimal solution set of the problem \cref{D2} is nonempty.
\end{proof}

\subsection{Proofs of \texorpdfstring{\cref{thm1,thm2,thm3,thm4}}{thm1 to thm4}}

We first introduce the following theorem given in \cite{globalized_ro}, which is essential to our computationally tractable GDRO models.
\begin{theorem}[Ben-Tal et al. \cite{globalized_ro}]
\label{bental}
Let $f(\cdot,\bm{x})$ be a closed proper concave function in $\mathbb{R}^m$ for all $\bm{x} \in \mathbb{R}^n$ and
$\phi:\mathbb{R}^m \times \mathbb{R}^m \to \mathbb{R}$ be a closed, jointly convex and nonnegative function for which $\phi(\bm{\xi},\bm{\xi})=0$ for all $\bm{\xi} \in \mathbb{R}^m$. Let the set $Z_1 \subset \mathbb{R}^L$ be nonempty, convex and compact with $0 \in ri(Z_1)$ and $Z_2$ be a closed convex set such that $Z_1 \subset Z_2$; and let $U_1$ and $U_2$ be defined as follows for fixed $\bm{\xi}_0$ and $\bm{A} \in \mathbb{R}^{m \times L}$ such that $\bm{\xi}_0 \in ri(U_1)$.

\begin{equation}
\label{uncertaintyset}
U_i = \{ \bm{\xi} = \bm{\xi}_0 + \bm{A} \bm{\zeta} | \bm{\zeta} \in Z_i\}, \qquad i=1,2.
\end{equation}

Then $\bm{x}$ satisfies
\begin{equation}
\label{theo_1}
f(\bm{\xi},\bm{x}) \leq \min \limits_{\bm{\xi}' \in U_1} \phi(\bm{\xi},\bm{\xi}') \qquad \forall \bm{\xi} \in U_2,
\end{equation}
if and only if there exist $\bm{v},\bm{w} \in \mathbb{R}^m$ that satisfy the single inequality
\begin{equation}
\label{theo_2}
\bm{\xi}_0^{T}(\bm{v}+\bm{w})+\delta^*(\bm{A}^{T}\bm{v}|Z_1) + \delta^*(\bm{A}^{T}\bm{w}|Z_2)-f_*(\bm{v}+\bm{w},\bm{x})+\phi^{**}(\bm{v},-\bm{v}) \leq 0.
\end{equation}
\end{theorem}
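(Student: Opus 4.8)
The plan is to read \cref{theo_1} as a robust (semi-infinite) inequality and to collapse the ``$\forall\,\bm{\xi}\in U_2$'' quantifier together with the inner ``$\min_{\bm{\xi}'\in U_1}$'' into one finite condition via conjugate duality. First I would eliminate the inner minimization: since $-\min_{\bm{\xi}'\in U_1}\phi(\bm{\xi},\bm{\xi}')=\max_{\bm{\xi}'\in U_1}\bigl(-\phi(\bm{\xi},\bm{\xi}')\bigr)$, the constraint \cref{theo_1} is equivalent to the single scalar requirement
\[
p^*:=\sup_{\bm{\xi}\in U_2,\;\bm{\xi}'\in U_1}\bigl[f(\bm{\xi},\bm{x})-\phi(\bm{\xi},\bm{\xi}')\bigr]\le 0 .
\]
Because $f(\cdot,\bm{x})$ is concave and $\phi$ is jointly convex, the objective is jointly concave in $(\bm{\xi},\bm{\xi}')$ and $U_2\times U_1$ is convex, so $p^*$ is the optimal value of a convex program and the theorem reduces to identifying its conjugate dual.

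For the sufficiency direction I would argue by weak duality, which needs no constraint qualification. Fix any $\bm{\xi}\in U_2$, $\bm{\xi}'\in U_1$ and any $\bm{v},\bm{w}$. The Fenchel--Young inequality for the concave conjugate gives $-f_*(\bm{v}+\bm{w},\bm{x})\ge f(\bm{\xi},\bm{x})-(\bm{v}+\bm{w})^{T}\bm{\xi}$, the conjugate term of $\phi$ supplies $\phi^{**}(\bm{v},-\bm{v})+\phi(\bm{\xi},\bm{\xi}')\ge \bm{v}^{T}\bm{\xi}-\bm{v}^{T}\bm{\xi}'$, and the support-function bounds give $\delta^*(\bm{v}|U_1)\ge \bm{v}^{T}\bm{\xi}'$ and $\delta^*(\bm{w}|U_2)\ge \bm{w}^{T}\bm{\xi}$. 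Invoking the affine splittings $\delta^*(\bm{v}|U_1)=\bm{\xi}_0^{T}\bm{v}+\delta^*(\bm{A}^{T}\bm{v}|Z_1)$ and $\delta^*(\bm{w}|U_2)=\bm{\xi}_0^{T}\bm{w}+\delta^*(\bm{A}^{T}\bm{w}|Z_2)$ and summing the four estimates, all cross terms in $\bm{v}$ and $\bm{w}$ cancel, so the left-hand side of \cref{theo_2} dominates $f(\bm{\xi},\bm{x})-\phi(\bm{\xi},\bm{\xi}')$ for every feasible pair. Hence if some $\bm{v},\bm{w}$ satisfy \cref{theo_2} then $p^*\le 0$, i.e. \cref{theo_1} holds.

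For the necessity direction I would compute the conjugate dual of $p^*$ explicitly and show it equals the left-hand side of \cref{theo_2}. Writing $f(\bm{\xi},\bm{x})=\inf_{\bm{y}}[\bm{y}^{T}\bm{\xi}-f_*(\bm{y},\bm{x})]$ and encoding the memberships by the indicators $\delta(\cdot|U_2)$ and $\delta(\cdot|U_1)$, the interchange of $\sup$ and $\inf$ turns the inner maximization into the convex conjugate of $\phi(\bm{\xi},\bm{\xi}')+\delta(\bm{\xi}|U_2)+\delta(\bm{\xi}'|U_1)$ evaluated at $(\bm{y},\bm{0})$. Applying the conjugate-of-a-sum (inf-convolution) rule splits $\bm{y}=\bm{v}+\bm{w}$, attaches $\bm{v}$ to $U_1$ and to the coupling direction of $\phi$ (yielding $\delta^*(\bm{A}^{T}\bm{v}|Z_1)$ and $\phi^{**}(\bm{v},-\bm{v})$) and $\bm{w}$ to $U_2$ (yielding $\delta^*(\bm{A}^{T}\bm{w}|Z_2)$), while the shared variable $\bm{\xi}$ forces $f_*$ to be evaluated at the combined multiplier $\bm{v}+\bm{w}$. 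Collecting terms reproduces exactly the dual objective in \cref{theo_2}, so it only remains to certify that $p^*$ equals this dual infimum with the infimum attained.

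The hard part will be precisely this last step: establishing a zero duality gap together with dual solvability. Since $U_2$ (equivalently $Z_2$) is only assumed closed and may be unbounded, no compactness-based minimax theorem applies directly, and strong duality cannot be taken for granted. This is where the standing hypotheses enter: $Z_1$ nonempty convex compact, $Z_1\subset Z_2$, $0\in ri(Z_1)$ and $\bm{\xi}_0\in ri(U_1)$ furnish the relative-interior (Slater-type) constraint qualification required by the Fenchel--Rockafellar duality theorem, which guarantees both the absence of a gap and attainment of the dual optimum. Once attainment is secured, the inequality ``$p^*\le 0$'' upgrades to ``there exist $\bm{v},\bm{w}$ satisfying \cref{theo_2}'', closing the equivalence; I would finally double-check the sign bookkeeping in the conjugate calculus and confirm that the conjugate of $\phi$ genuinely appears along the direction $(\bm{v},-\bm{v})$.
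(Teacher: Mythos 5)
First, a point of comparison: the paper itself never proves \cref{bental}; it imports the statement verbatim from \cite{globalized_ro} and only applies it, so there is no internal proof to measure your attempt against. Judged on its own, your plan follows the natural (and presumably the original) Fenchel-duality route, and your sufficiency half is correct and complete: the four Fenchel--Young/support-function estimates do sum so that the left-hand side of \cref{theo_2} dominates $f(\bm{\xi},\bm{x})-\phi(\bm{\xi},\bm{\xi}')$ over $U_2\times U_1$, and weak duality requires no constraint qualification.

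The necessity half, however, is where the theorem actually lives, and your outline leaves its two essential steps unestablished. (i) Applying the conjugate-of-a-sum rule to $G(\bm{\xi},\bm{\xi}')=\phi(\bm{\xi},\bm{\xi}')+\delta(\bm{\xi}|U_2)+\delta(\bm{\xi}'|U_1)$ produces a dual in which $\phi^{**}$ is evaluated at a \emph{free} pair, namely $\inf_{\bm{a},\bm{b}}\{\phi^{**}(\bm{a},\bm{b})+\delta^*(\bm{y}-\bm{a}|U_2)+\delta^*(-\bm{b}|U_1)\}$; nothing in the inf-convolution calculus by itself ties $\bm{b}$ to $-\bm{a}$. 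The collapse onto the anti-diagonal is forced by the hypothesis $\phi(\bm{s},\bm{s})=0$ for all $\bm{s}$, which gives $\phi^{**}(\bm{a},\bm{b})\ge\sup_{\bm{s}}(\bm{a}+\bm{b})^{T}\bm{s}=+\infty$ whenever $\bm{a}+\bm{b}\neq\bm{0}$. You relegate this to ``double-check the sign bookkeeping,'' but it is the only place where that hypothesis on $\phi$ enters the argument, and without it the dual you compute is not \cref{theo_2}. (ii) Your claim that the standing hypotheses ($Z_1$ compact, $0\in ri(Z_1)$, $Z_1\subset Z_2$, $\bm{\xi}_0\in ri(U_1)$) ``furnish'' the relative-interior qualification for Fenchel--Rockafellar duality is not right as stated. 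Since $\phi$ is real-valued and $ri(U_1)$, $ri(U_2)$ are automatically nonempty, the exactness of the sum/inf-convolution steps is indeed free; the binding qualification is $ri(\mathrm{dom}(f(\cdot,\bm{x})))\cap ri(U_2)\neq\emptyset$, which is what guarantees both zero gap and attainment of the dual infimum. The listed hypotheses say nothing about $\mathrm{dom}(f(\cdot,\bm{x}))$, so for a general proper (extended-real-valued) concave $f$ they cannot deliver this condition; you must either add it explicitly (it is trivial in this paper's application, where each $f_k$ is a finite quadratic, hence $\mathrm{dom}(f_k)=\mathbb{R}^p$) or reproduce whatever argument the original source uses. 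As it stands, the ``hard part'' you correctly identify is asserted rather than proved, and it is attributed to assumptions that do not do the job.
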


\emph{Notation.} For any function $g:\mathbb{R}^n \to \mathbb{R}$, let $\mathrm{dom}(g)=\{\bm{x} \in \mathbb{R}^n | g(\bm{x}) < \infty \}$. The convex conjugate of $g$ is defined as
$$g^*(\bm{y})=\sup \limits_{\bm{x}\in \mathrm{dom}(g)} \{\bm{y}^{T} \bm{x} -g(\bm{x})\}.$$
The concave conjugate function of $g$ is defined as
$$g_*(\bm{y})=\inf \limits_{\bm{x}\in \mathrm{dom}(-g)} \{\bm{y}^{T} \bm{x} -g(\bm{x})\}.$$
For a function $f(\cdot,\cdot)$ with two variables, let $f^*(\cdot,\cdot)$ and $f_*(\cdot,\cdot)$ denote the convex and concave conjugate function with respect to the first variable, respectively, while $f^{**}(\cdot,\cdot)$ and $f_{**}(\cdot,\cdot)$ denote the convex and concave conjugate function with respect to both variables, respectively.

The indicator function on the set $S$ is defined as
\begin{equation*}
   \delta(\bm{x}|S)=
   \left\{ \begin{aligned}  0 \qquad &\mathrm{if} \quad \bm{x} \in S, \\  \infty \qquad & \mathrm{otherwise}. \end{aligned}\right.
\end{equation*}

\begin{proof}[Proof of \cref{thm1}]
Note that
$$h(\bm{x},\bm{\xi})=\max \limits_{k} \{\bm{a}_k(\bm{x})^{T} \bm{\xi}+b_k(\bm{x})\}, \qquad k \in \{1,2,...,K\}.$$
So the constraint
\begin{equation*}
h(\bm{x},\bm{\xi})-\bm{\xi}^{T}\bm{\Lambda} \bm{\xi} - \bm{q}^{T} \bm{\xi} - t \leq \min \limits_{\bm{\xi}' \in Y} \phi(\bm{\xi},\bm{\xi}') \qquad \forall \bm{\xi} \in \mathbb{R}^p
\end{equation*}
is equivalent to
\begin{equation*}
\bm{a}_k(\bm{x})^{T} \bm{\xi}+b_k(\bm{x})-\bm{\xi}^{T}\bm{\Lambda} \bm{\xi} - \bm{q}^{T} \bm{\xi} - t \leq \min \limits_{\bm{\xi}' \in Y} \phi(\bm{\xi},\bm{\xi}') \quad \forall \bm{\xi} \in \mathbb{R}^p \quad \forall k.
\end{equation*}
Let
\begin{equation*}
\begin{split}
& m = p, \\
& Z_1 = \{\bm{\zeta} | \|\bm{\zeta}\|_2 \leq \sqrt{\bar{\gamma}}\},  \qquad Z_2 = \mathbb{R}^p, \\
& \bm{A} = \bm{\Sigma}_0^{1/2}, \qquad \bm{\xi}_0 = \bar{\bm{\mu}}, \\
& U_1= \{\bm{\xi} | (\bm{\xi}-\bar{\bm{\mu}})^T \bm{\Sigma}_0^{-1} (\bm{\xi}-\bar{\bm{\mu}}) \leq \bar{\gamma}\} = Y, \qquad U_2 = \mathbb{R}^p, \\
& f_k(\bm{\xi},(\bm{x},\bm{\Lambda},\bm{q},t))=\bm{a}_k(\bm{x})^{T} \bm{\xi}+b_k(\bm{x})-\bm{\xi}^{T}\bm{\Lambda} \bm{\xi} - \bm{q}^{T} \bm{\xi} - t,\\
& \phi(\bm{\xi},\bm{\xi}') = \theta \| \bm{\xi}-\bm{\xi}'\|_2
\end{split}
\end{equation*}
in \cref{bental}.
Then
\begin{equation*}
\begin{split}
& \delta^*(\bm{A}^{T}\bm{v}|Z_1) = \sup \limits_{\|\bm{\zeta}\|_2 \leq \sqrt{\bar{\gamma}}} \bm{\zeta}^{T} \bm{A}^{T}\bm{v}
                                    = \sqrt{\bar{\gamma}}\| \bm{A}^{T}\bm{v} \|_2
                                    = \sqrt{\bar{\gamma}} \|\bm{\Sigma}_0^{1/2}\bm{v} \|_2,\\
& \delta^*(\bm{A}^{T}\bm{w}|Z_2) = \sup \limits_{\bm{\zeta} \in \mathbb{R}^p} \bm{\zeta}^{T}\bm{A}^{T}\bm{w}
                                    = \sup \limits_{\bm{y} \in \mathbb{R}^p} \bm{y}^{T} \bm{w}
                                    = \left\{ \begin{split}  0 \qquad &\mathrm{if} \quad \bm{w}=0, \\  \infty \qquad & \mathrm{otherwise}, \end{split}\right.
\end{split}
\end{equation*}
\begin{equation*}
 \begin{split}
 \phi^{**}(\bm{v},-\bm{v}) & = \sup \limits_{\bm{s},\bm{t}} \{(\bm{s}-\bm{t})^{T}\bm{v} - \theta \|\bm{s}-\bm{t}\|_2\}  \\
  & = \sup \limits_{\bm{s}} \{\bm{s}^{T}\bm{v} - \theta \|\bm{s}\|_2\} \\
  & = \left\{ \begin{split} 0 \qquad & \mathrm{if} \quad \|\bm{v}\|_2 \leq \theta, \\ \infty \qquad & \mathrm{otherwise}, \end{split} \right.
 \end{split}
\end{equation*}
\begin{equation*}
\begin{split}
f_{k,*}(\bm{y},(\bm{x},\bm{\Lambda},\bm{q},t))
& = \inf \limits_{\bm{\xi} \in \mathbb{R}^p} \{\bm{y}^{T} \bm{\xi}-f_k(\bm{\xi},(\bm{x},\bm{\Lambda},\bm{q},t))\} \\
& = \inf \limits_{\bm{\xi} \in \mathbb{R}^p} \{\bm{\xi}^{T} \bm{\Lambda} \bm{\xi} + (\bm{y}+\bm{q}-\bm{a}_k(\bm{x}))^{T}\bm{\xi} + t -b_k(\bm{x}) \}.
\end{split}
\end{equation*}

According to \cref{bental}, we know that
\begin{equation*}
\bm{a}_k(\bm{x})^{T} \bm{\xi}+b_k(\bm{x})-\bm{\xi}^{T}\bm{\Lambda} \bm{\xi} - \bm{q}^{T} \bm{\xi} - t \leq \min \limits_{\bm{\xi}' \in Y} \phi(\bm{\xi},\bm{\xi}') \qquad \forall \bm{\xi} \in \mathbb{R}^p
\end{equation*}
is equivalent to
\begin{equation*}
\left\{
\begin{aligned}
& \bar{\bm{\mu}}^{T} \bm{v}_k + \sqrt{\bar{\gamma}} \|\bm{\Sigma}_0^{1/2}\bm{v}_k  \|_2 -\inf \limits_{\bm{\xi} \in \mathbb{R}^p} \{\bm{\xi}^{T} \bm{\Lambda} \bm{\xi} + (\bm{v}_k +\bm{q}-\bm{a}_k(\bm{x}))^{T}\bm{\xi} + t -b_k(\bm{x}) \} \leq 0, \\
& \|\bm{v}_k \| \leq \theta,
\end{aligned}
\right.
\end{equation*}
where
\begin{equation*}
\begin{split}
& \bar{\bm{\mu}}^{T} \bm{v}_k  + \sqrt{\bar{\gamma}} \|\bm{\Sigma}_0^{1/2}\bm{v}_k  \|_2 -\inf \limits_{\bm{\xi} \in \mathbb{R}^p} \{\bm{\xi}^{T} \bm{\Lambda} \bm{\xi} + (\bm{v}_k +\bm{q}-\bm{a}_k(\bm{x}))^{T}\bm{\xi} + t -b_k(\bm{x}) \} \leq 0 \\
\Longleftrightarrow \quad
& \bar{\bm{\mu}}^{T} \bm{v}_k  + \sqrt{\bar{\gamma}} \|\bm{\Sigma}_0^{1/2}\bm{v}_k  \|_2 \leq \bm{\xi}^{T} \bm{\Lambda} \bm{\xi} + (\bm{v}_k +\bm{q}-\bm{a}_k(\bm{x}))^{T}\bm{\xi} + t -b_k(\bm{x}) \qquad\forall \bm{\xi} \in \mathbb{R}^p\\
\Longleftrightarrow \quad
& \bm{\xi}^{T} \bm{\Lambda} \bm{\xi} + (\bm{v}_k +\bm{q}-\bm{a}_k(\bm{x}))^{T}\bm{\xi} + t -b_k(\bm{x})-\bar{\bm{\mu}}^{T} \bm{v}_k  - \sqrt{\bar{\gamma}} \|\bm{\Sigma}_0^{1/2}\bm{v}_k  \|_2 \geq 0 \qquad \forall \bm{\xi} \in \mathbb{R}^p \\
\Longleftrightarrow \quad
&[\bm{\xi}^{T},1] \begin{bmatrix}
      \bm{\Lambda} & \dfrac{1}{2}(\bm{v}_k +\bm{q}-\bm{a}_k(\bm{x})) \\
      \dfrac{1}{2}(\bm{v}_k +\bm{q}-\bm{a}_k(\bm{x}))^{T} & t-b_k(\bm{x})-\bar{\bm{\mu}}^{T} \bm{v}_k -\sqrt{\bar{\gamma}} \|\bm{\Sigma}_0^{1/2}\bm{v}_k  \|_2 \\
      \end{bmatrix}
      \begin{bmatrix}
      \bm{\xi} \\
      1\\
      \end{bmatrix}  \geq 0 \qquad \forall \bm{\xi} \in \mathbb{R}^p \\
\Longleftrightarrow \quad
& \begin{bmatrix}
      \bm{\Lambda} & \dfrac{1}{2}(\bm{v}_k +\bm{q}-\bm{a}_k(\bm{x})) \\
      \dfrac{1}{2}(\bm{v}_k +\bm{q}-\bm{a}_k(\bm{x}))^{T} & t-b_k(\bm{x})-\bar{\bm{\mu}}^{T} \bm{v}_k -\sqrt{\bar{\gamma}} \|\bm{\Sigma}_0^{1/2}\bm{v}_k  \|_2 \\
      \end{bmatrix}  \succeq 0.
\end{split}
\end{equation*}
Then the result follows.
\end{proof}

\begin{proof}[Proof of \cref{thm2}]
Note that
$$h(\bm{x},\bm{\xi})=\max \limits_{k} \{\bm{a}_k(\bm{x})^{T} \bm{\xi}+b_k(\bm{x})\}, \qquad k \in \{1,2,...,K\}.$$
So the constraint
\begin{equation*}
h(\bm{x},\bm{\xi})-\bm{\xi}^{T}\bm{\Lambda} \bm{\xi} - \bm{q}^{T} \bm{\xi} - t \leq \min \limits_{\bm{\xi}' \in Y} \phi(\bm{\xi},\bm{\xi}') \qquad \forall \bm{\xi} \in \Xi
\end{equation*}
is equivalent to
\begin{equation*}
\bm{a}_k(\bm{x})^{T} \bm{\xi}+b_k(\bm{x})-\bm{\xi}^{T}\bm{\Lambda} \bm{\xi} - \bm{q}^{T} \bm{\xi} - t \leq \min \limits_{\bm{\xi}' \in Y} \phi(\bm{\xi},\bm{\xi}') \qquad \forall \bm{\xi} \in \Xi  \forall k.
\end{equation*}

Let
\begin{equation*}
\begin{split}
& m = p, \\
& Z_1 = \{\bm{\zeta} | \|\bm{\zeta}\|_2 \leq \sqrt{\bar{\gamma_1}}\},
 \qquad Z_2 = \{\bm{\zeta} | \|\bm{\zeta}\|_2 \leq \sqrt{\bar{\gamma_2}}\},  \\
& \bm{A} = \bm{\Sigma}_0^{1/2}, \qquad \bm{\xi}_0 = \bar{\bm{\mu}}, \\
& U_1= \{\bm{\xi} | (\bm{\xi}-\bar{\bm{\mu}})^T \bm{\Sigma}_0^{-1} (\bm{\xi}-\bar{\bm{\mu}}) \leq \bar{\gamma}_1\} = Y, \\
& U_2= \{\bm{\xi} | (\bm{\xi}-\bar{\bm{\mu}})^T \bm{\Sigma}_0^{-1} (\bm{\xi}-\bar{\bm{\mu}}) \leq \bar{\gamma}_2\} = \Xi, \\
& f_k(\bm{\xi},(\bm{x},\bm{\Lambda},\bm{q},t))=\bm{a}_k(\bm{x})^{T} \bm{\xi}+b_k(\bm{x})-\bm{\xi}^{T}\bm{\Lambda} \bm{\xi} - \bm{q}^{T} \bm{\xi} - t,\\
& \phi(\bm{\xi},\bm{\xi}') = \theta \| \bm{\xi}-\bm{\xi}'\|_2
\end{split}
\end{equation*}
in \cref{bental}.

Then
\begin{equation*}
\begin{split}
& \delta^*(\bm{A}^{T}\bm{v}|Z_1) = \sup \limits_{\|\bm{\zeta}\|_2 \leq \sqrt{\bar{\gamma_1}}} \bm{\zeta}^{T}\bm{A}^{T}\bm{v}
                                    = \sqrt{\bar{\gamma_1}} \| \bm{A}^{T}\bm{v} \|_2
                                    = \sqrt{\bar{\gamma_1}} \|\bm{\Sigma}_0^{1/2}\bm{v} \|_2,\\
& \delta^*(\bm{A}^{T}\bm{w}|Z_2) = \sup \limits_{\|\bm{\zeta}\|_2 \leq \sqrt{\bar{\gamma_2}}} \bm{\zeta}^{T}\bm{A}^{T}\bm{w}
                                    = \sqrt{\bar{\gamma_2}} \| \bm{A}^{T}\bm{w} \|_2
                                    = \sqrt{\bar{\gamma_2}} \|\bm{\Sigma}_0^{1/2}\bm{w} \|_2,\\
\end{split}
\end{equation*}
\begin{equation*}
 \begin{split}
 \phi^{**}(\bm{v},-\bm{v}) & = \sup \limits_{\bm{s},\bm{t}} \{(\bm{s}-\bm{t})^{T}\bm{v} - \theta \|\bm{s}-\bm{t}\|_2\}  \\
  & = \sup \limits_{\bm{s}} \{\bm{s}^{T}\bm{v} - \theta \|\bm{s}\|_2\} \\
  & = \left\{ \begin{split} 0 \qquad & \mathrm{if} \quad \|\bm{v}\|_2 \leq \theta, \\ \infty \qquad & \mathrm{otherwise}, \end{split} \right.
 \end{split}
\end{equation*}
\begin{equation*}
\begin{split}
f_{k,*}(\bm{y},(\bm{x},\bm{\Lambda},\bm{q},t))
& = \inf \limits_{\bm{\xi} \in \mathbb{R}^p} \{\bm{y}^{T} \bm{\xi}-f_k(\bm{\xi},(\bm{x},\bm{\Lambda},\bm{q},t))\} \\
& = \inf \limits_{\bm{\xi} \in \mathbb{R}^p} \{\bm{\xi}^{T} \bm{\Lambda} \bm{\xi} + (\bm{y}+\bm{q}-\bm{a}_k(\bm{x}))^{T}\bm{\xi} + t -b_k(\bm{x}) \}.
\end{split}
\end{equation*}

According to \cref{bental}, we know that
\begin{equation*}
\bm{a}_k(\bm{x})^{T} \bm{\xi}+b_k(\bm{x})-\bm{\xi}^{T}\bm{\Lambda} \bm{\xi} - \bm{q}^{T} \bm{\xi} - t \leq \min \limits_{\bm{\xi}' \in Y_1} \phi(\bm{\xi},\bm{\xi}') \qquad \forall \bm{\xi} \in Y_2
\end{equation*}
is equivalent to
\begin{equation*}
\left\{
\begin{aligned}
& \bar{\bm{\mu}}^{T} (\bm{v}_k + \bm{w}_k) + \sqrt{\bar{\gamma_1}} \|\bm{\Sigma}_0^{1/2}\bm{v}_k  \|_2 +\sqrt{\bar{\gamma_2}} \|\bm{\Sigma}_0^{1/2}\bm{w}_k \|_2 -f_{k,*}(\bm{v}_k+\bm{w}_k,(\bm{x},\bm{\Lambda},\bm{q},t)) \} \leq 0, \\
& \|\bm{v}_k \|_2 \leq \theta,
\end{aligned}
\right.
\end{equation*}
where
\begin{equation*}
\begin{aligned}
& \bar{\bm{\mu}}^{T} (\bm{v}_k + \bm{w}_k) + \sqrt{\bar{\gamma_1}} \|\bm{\Sigma}_0^{1/2}\bm{v}_k  \|_2 +\sqrt{\bar{\gamma_2}} \|\bm{\Sigma}_0^{1/2}\bm{w}_k \|_2 - f_{k,*}(\bm{v}_k+\bm{w}_k,(\bm{x},\bm{\Lambda},\bm{q},t)) \} \leq 0 \\
~\\
\Longleftrightarrow \quad
& \bar{\bm{\mu}}^{T} (\bm{v}_k+\bm{w}_k)  + \sqrt{\bar{\gamma_1}} \|\bm{\Sigma}_0^{1/2}\bm{v}_k \|_2 + \sqrt{\bar{\gamma_2}} \|\bm{\Sigma}_0^{1/2}\bm{w}_k \|_2  \\
& \leq \bm{\xi}^{T} \bm{\Lambda} \bm{\xi} + (\bm{v}_k + \bm{w}_k +\bm{q}-\bm{a}_k(\bm{x}))^{T}\bm{\xi} + t -b_k(\bm{x}) \qquad \forall \bm{\xi} \in \mathbb{R}^p\\
~\\
\Longleftrightarrow \quad
&   \begin{bmatrix}
      \bm{\Lambda} & \dfrac{1}{2} (\bm{v}_k + \bm{w}_k +\bm{q}-\bm{a}_k(\bm{x}))\\[1.5em]
      \dfrac{1}{2}(\bm{v}_k + \bm{w}_k +\bm{q}-\bm{a}_k(\bm{x}))^{T} & \begin{aligned}t-b_k(\bm{x})-\bar{\bm{\mu}}^{T} (\bm{v}_k + \bm{w}_k) \\ -\sqrt{\bar{\gamma_1}} \|\bm{\Sigma}_0^{1/2}\bm{v}_k \|_2 - \sqrt{\bar{\gamma_2}} \|\bm{\Sigma}_0^{1/2}\bm{w}_k \|_2 \end{aligned}\\
      \end{bmatrix}  \succeq 0
\end{aligned}
\end{equation*}
\begin{equation*}
\Longleftrightarrow \quad
\left\{
\begin{aligned}
& \begin{bmatrix}
      \bm{\Lambda} & \dfrac{1}{2} (\bm{v}_k + \bm{w}_k +\bm{q}-\bm{a}_k(\bm{x}))\\
      \dfrac{1}{2}(\bm{v}_k + \bm{w}_k +\bm{q}-\bm{a}_k(\bm{x}))^{T} & t-b_k(\bm{x})-\bar{\bm{\mu}}^{T} (\bm{v}_k + \bm{w}_k) - z_{1k} - z_{2k} \\
      \end{bmatrix} \succeq 0,   \\
     & \sqrt{\bar{\gamma_1}} \|\bm{\Sigma}_0^{1/2}\bm{v}_k \|_2 \leq z_{1k}, \\
     & \sqrt{\bar{\gamma_2}} \|\bm{\Sigma}_0^{1/2}\bm{w}_k \|_2 \leq z_{2k}.
\end{aligned}
\right.
\end{equation*}
Then the result follows.
\end{proof}

As for \cref{thm3,thm4}, the proofs are similar to that of \cref{thm1,thm2}, respectively. So we omit them here.

\section{Determine the Value of \texorpdfstring{$\gamma_1$}{gamma1} and \texorpdfstring{$\gamma_2$}{gamma2}}
\subsection{Boundedness Assumption}
Delage and Ye \cite{ye} show the confidence region for the mean and covariance matrix under the boundedness assumption.
\begin{assumption}\label{boundedness}
There exists an $R \geq 0$ such that
$$P\left((\bm{\xi}-\bm{\mu})^T \bm{\Sigma}^{-1} (\bm{\xi}-\bm{\mu}) \leq R^2 \right) = 1,$$
where $\bm{\mu}$ is the mean of $\bm{\xi}$ and $\bm{\Sigma}$ is the covariance matrix of $\bm{\xi}$.
\end{assumption}

\begin{theorem}[Delage and Ye\cite{ye}]
\label{confidence}
Let $\{\bm{\xi}_i\}_{i=1}^M$ be a set of $M$ samples generated independently and randomly according to the distribution of $\bm{\xi}$. If $\bm{\xi}$ satisfies \cref{boundedness} and $M$ satisfies the following inequality
$$M > R^4(\sqrt{1-p/R^4}+\sqrt{\ln(2/\delta)})^2,$$
then with the probability greater than $1-\delta$ over the choice of $\{\bm{\xi}_i\}_{i=1}^M$, the following set of constraints are met:
\begin{equation*}
\begin{split}
& (\bm{\mu}_0-\bm{\mu})^T\Sigma^{-1}(\bm{\mu}_0-\bm{\mu}) \leq \beta(\delta/2), \\
& \bm{\Sigma} \preceq \dfrac{1}{1-\alpha(\delta/4)-\beta(\delta/2)} \bm{\Sigma}_0, \\
& \bm{\Sigma} \succeq \dfrac{1}{1+\alpha(\delta/4)} \bm{\Sigma}_0,
\end{split}
\end{equation*}
where $\bm{\mu}_0$ is the sample mean, $\bm{\Sigma}_0$ is the sample covariance matrix and $\alpha(\delta/4)$ and $\beta(\delta/2)$ are as follows:
\begin{equation*}
\begin{aligned}
\alpha(\delta/4) & = (R^2 / \sqrt{M})(\sqrt{1-p/R^4} + \sqrt{\ln(4/\delta)},\\
\beta(\delta/2) & = (R^2/M)(2 + \sqrt{2\ln(2/\delta)})^2.
\end{aligned}
\end{equation*}
\end{theorem}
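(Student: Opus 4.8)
The plan is to reduce all three constraints to concentration statements about a \emph{normalized} sample and then to invoke the bounded-difference (McDiarmid) inequality. First I would set $\bm{y} = \bm{\Sigma}^{-1/2}(\bm{\xi}-\bm{\mu})$, so that \cref{boundedness} becomes $\|\bm{y}\|_2 \le R$ almost surely while $E[\bm{y}] = \bm{0}$ and $E[\bm{y}\bm{y}^T] = \bm{\mathcal{I}}_p$. Writing $\bar{\bm{y}} = \frac{1}{M}\sum_{i=1}^M \bm{y}_i$ and $\hat{\bm{S}} = \frac{1}{M}\sum_{i=1}^M \bm{y}_i\bm{y}_i^T$, one checks that $(\bm{\mu}_0-\bm{\mu})^T\bm{\Sigma}^{-1}(\bm{\mu}_0-\bm{\mu}) = \|\bar{\bm{y}}\|_2^2$ and $\bm{\Sigma}^{-1/2}\bm{\Sigma}_0\bm{\Sigma}^{-1/2} = \hat{\bm{S}} - \bar{\bm{y}}\bar{\bm{y}}^T =: \bm{S}$, so the two semidefinite constraints are equivalent to the single spectral sandwich $(1-\alpha-\beta)\bm{\mathcal{I}}_p \preceq \bm{S} \preceq (1+\alpha)\bm{\mathcal{I}}_p$. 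The whole theorem then follows from tail bounds on $\|\bar{\bm{y}}\|_2$ and on $\|\hat{\bm{S}}-\bm{\mathcal{I}}_p\|_2$.

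For the mean I would apply McDiarmid to $f(\bm{y}_1,\dots,\bm{y}_M)=\|\bar{\bm{y}}\|_2$: replacing a single $\bm{y}_i$ moves $\bar{\bm{y}}$ by at most $2R/M$ in norm, so $f$ has bounded differences $2R/M$, while $E[f]\le\sqrt{E\|\bar{\bm{y}}\|_2^2}=\sqrt{p/M}$ since the cross terms vanish by independence. This yields $\|\bar{\bm{y}}\|_2^2 \le \beta(\delta/2)$ with probability at least $1-\delta/2$, which is exactly the first constraint.

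For the covariance I would apply McDiarmid to $g(\bm{y}_1,\dots,\bm{y}_M)=\|\hat{\bm{S}}-\bm{\mathcal{I}}_p\|_2$. Here changing one sample perturbs $\hat{\bm{S}}$ by a matrix of spectral norm at most $2R^2/M$, and the expectation is controlled through the Frobenius norm: as the summands $\bm{y}_i\bm{y}_i^T-\bm{\mathcal{I}}_p$ are independent and centered, $E\|\hat{\bm{S}}-\bm{\mathcal{I}}_p\|_F^2 = \frac{1}{M}(E\|\bm{y}\|_2^4 - p) \le \frac{1}{M}(R^4-p)$, giving $E[g] \le \frac{R^2}{\sqrt M}\sqrt{1-p/R^4}$. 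This produces $\|\hat{\bm{S}}-\bm{\mathcal{I}}_p\|_2 \le \alpha(\delta/4)$ with probability at least $1-\delta/4$. Finally, because $\bm{0}\preceq\bar{\bm{y}}\bar{\bm{y}}^T\preceq\beta\bm{\mathcal{I}}_p$ on the mean event, I can pass from $\hat{\bm{S}}$ to $\bm{S}$ and obtain $\bm{S}\preceq(1+\alpha)\bm{\mathcal{I}}_p$ together with $\bm{S}\succeq(1-\alpha-\beta)\bm{\mathcal{I}}_p$; undoing the normalization gives the two asymmetric matrix inequalities, and the hypothesis $M > R^4(\sqrt{1-p/R^4}+\sqrt{\ln(2/\delta)})^2$ is precisely what forces $\alpha<1$, so that the factor $1/(1-\alpha-\beta)$ is positive. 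A union bound over the two events leaves probability at least $1-\delta$.

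The step I expect to be the main obstacle is the covariance bound: fixing the correct bounded-difference constant for the spectral norm of a sum of rank-one matrices, and especially the sharp expectation estimate via the identity $E\|\hat{\bm{S}}-\bm{\mathcal{I}}_p\|_F^2 = \frac1M(E\|\bm{y}\|_2^4-p)$, are where the exact form of $\alpha$ is won or lost. The bookkeeping that combines the $\hat{\bm{S}}$ tail with the $\bar{\bm{y}}\bar{\bm{y}}^T$ correction to yield $\alpha$ on one side and $\alpha+\beta$ on the other is the other place that will demand care.
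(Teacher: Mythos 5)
First, a point of reference: the paper itself contains no proof of \cref{confidence} --- it is quoted from Delage and Ye \cite{ye}, and Appendix B only proves the corollary for the normal case. So your proposal can only be measured against the cited source's argument, which it in fact parallels closely: whitening $\bm{y}=\bm{\Sigma}^{-1/2}(\bm{\xi}-\bm{\mu})$, McDiarmid's inequality for the mean and for the second-moment matrix, and a union bound. The skeleton is sound. The identity $\bm{\Sigma}^{-1/2}\bm{\Sigma}_0\bm{\Sigma}^{-1/2}=\hat{\bm{S}}-\bar{\bm{y}}\bar{\bm{y}}^T$, the reduction of the two matrix inequalities to $(1-\alpha-\beta)\bm{\mathcal{I}}_p\preceq \bm{S}\preceq(1+\alpha)\bm{\mathcal{I}}_p$, the expectation bounds $E\|\bar{\bm{y}}\|_2\le\sqrt{p/M}$ and $E\|\hat{\bm{S}}-\bm{\mathcal{I}}_p\|_F\le \frac{R^2}{\sqrt{M}}\sqrt{1-p/R^4}$, and the accounting $\delta/2+\delta/4\le\delta$ are all correct; the mean half even comes out sharper than $\beta(\delta/2)$, since $p=E\|\bm{y}\|_2^2\le R^2$ gives $\sqrt{p/M}+R\sqrt{2\ln(2/\delta)/M}\le \frac{R}{\sqrt{M}}\bigl(1+\sqrt{2\ln(2/\delta)}\bigr)$.

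The genuine gap sits exactly where you predicted: the bounded-difference constant in the covariance step. With your constant $2R^2/M$, McDiarmid at level $\delta/4$ yields a deviation term $\frac{2R^2}{M}\sqrt{\tfrac{1}{2}M\ln(4/\delta)}=\frac{R^2}{\sqrt{M}}\sqrt{2\ln(4/\delta)}$, so what you actually prove is
\begin{equation*}
\|\hat{\bm{S}}-\bm{\mathcal{I}}_p\|_2 \le \frac{R^2}{\sqrt{M}}\left(\sqrt{1-p/R^4}+\sqrt{2\ln(4/\delta)}\right),
\end{equation*}
which exceeds the stated $\alpha(\delta/4)$ by a factor $\sqrt{2}$ in its second term; as written, your argument establishes the theorem only with a strictly larger $\alpha$. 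The repair is elementary. For differences of positive semidefinite rank-one matrices one has $\lambda_{\max}(\bm{y}\bm{y}^T-\bm{y}'\bm{y}'^T)\le\lambda_{\max}(\bm{y}\bm{y}^T)\le R^2$, so replacing one sample moves $\hat{\bm{S}}$ by at most $R^2/M$ in spectral norm, not $2R^2/M$; alternatively, run McDiarmid on the Frobenius norm, whose bounded difference is $\frac{1}{M}\|\bm{y}\bm{y}^T-\bm{y}'\bm{y}'^T\|_F\le\frac{\sqrt{2}R^2}{M}$ because $\|\bm{y}\bm{y}^T-\bm{y}'\bm{y}'^T\|_F^2=\|\bm{y}\|_2^4+\|\bm{y}'\|_2^4-2(\bm{y}^T\bm{y}')^2\le 2R^4$, and which dominates the spectral norm. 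Either constant brings the deviation term down to at most $\frac{R^2}{\sqrt{M}}\sqrt{\ln(4/\delta)}$ and recovers the stated $\alpha(\delta/4)$. A secondary loose end: your closing claim that the hypothesis $M>R^4(\sqrt{1-p/R^4}+\sqrt{\ln(2/\delta)})^2$ is ``precisely'' what makes $1/(1-\alpha-\beta)$ positive is not right --- making $\alpha(\delta/4)<1$ requires $\ln(4/\delta)$ rather than $\ln(2/\delta)$ in that threshold, and positivity of $1-\alpha(\delta/4)-\beta(\delta/2)$ requires more than $\alpha<1$ (one can pick $R$, $p$, $\delta$, $M$ meeting the stated threshold with $1-\alpha-\beta<0$). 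This wrinkle is partly inherited from the statement as quoted (which also drops a parenthesis in the formula for $\alpha(\delta/4)$), but a self-contained proof must either impose, or verify, the condition that actually guarantees $1-\alpha-\beta>0$, since otherwise the first sandwich inequality is meaningless.
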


Moreover, they show that if the conditions in \cref{confidence} are satisfied, the ambiguity set \cref{ambiguity} with $\gamma_1$ and $\gamma_2$ defined as follows contains the true distribution of $\bm{\xi}$ with probability greater than $1-\delta$ over the choice of $\{\bm{\xi}_i\}_{i=1}^M$.
\begin{equation}\label{bounded_gamma}
\begin{split}
\gamma_1 = \dfrac{\beta(\delta/2)}{1-\alpha(\delta/4)-\beta(\delta/2)}, \\
\gamma_2 = \dfrac{1+\beta(\delta/2)}{1-\alpha(\delta/4)-\beta(\delta/2)}.
\end{split}
\end{equation}

\subsection{Normal Assumption}
We consider an unbounded case that the true distribution is normal distribution.
\begin{lemma}[see \cite{multi_sta_book} section 8.3]
\label{first_moment}
Let $\{\bm{\xi}_i\}_{i=1}^M$ be i.i.d $N_p(\bm{\mu},\bm{\Sigma})$. By Hotelling-$T^2$ test, we have exactly
$$P\left((\bm{\mu}_0 - \bm{\mu})^T \bm{\Sigma}_0^{-1} (\bm{\mu}_0 - \bm{\mu}) \leq \dfrac{p}{M-p} F_{\alpha}(p,M-p) \right) = 1- \alpha,$$
where $F_{\alpha}(p,M-p)$ is $\alpha$ quantile of the distribution $F(p,M-p)$.
\end{lemma}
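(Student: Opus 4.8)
The plan is to recognize the displayed quantity as a rescaling of the classical one-sample Hotelling $T^2$ statistic and to derive its exact distribution from the joint sampling law of the sample mean and the sample scatter matrix. First I would record the two foundational distributional facts for an i.i.d.\ $N_p(\bm{\mu},\bm{\Sigma})$ sample: the sample mean satisfies $\bm{\mu}_0 \sim N_p(\bm{\mu},\tfrac{1}{M}\bm{\Sigma})$, while the scatter matrix $\bm{S} = \sum_{i=1}^{M}(\bm{\xi}_i-\bm{\mu}_0)(\bm{\xi}_i-\bm{\mu}_0)^T$ follows a Wishart distribution $\mathcal{W}_p(M-1,\bm{\Sigma})$, and, crucially, that $\bm{\mu}_0$ and $\bm{S}$ are independent. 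Both facts follow from applying an orthogonal (Helmert) transformation to the data and invoking Cochran's theorem, which splits the sample into the mean direction and $M-1$ mutually independent residual vectors. Here $\bm{\Sigma}_0 = \bm{S}/M$ is taken to be the maximum-likelihood sample covariance, which is exactly the convention that makes the constant $\tfrac{p}{M-p}$ come out as stated.

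Next I would reduce the quadratic form to a canonical, parameter-free shape. Writing $\bm{z} = \sqrt{M}(\bm{\mu}_0-\bm{\mu}) \sim N_p(0,\bm{\Sigma})$ and using $\bm{\Sigma}_0 = \bm{S}/M$, a short computation gives $(\bm{\mu}_0-\bm{\mu})^T\bm{\Sigma}_0^{-1}(\bm{\mu}_0-\bm{\mu}) = \bm{z}^T\bm{S}^{-1}\bm{z}$. Applying the whitening map $\bm{y} = \bm{\Sigma}^{-1/2}\bm{z}$ together with $\bm{V} = \bm{\Sigma}^{-1/2}\bm{S}\bm{\Sigma}^{-1/2}$ turns this into $\bm{y}^T\bm{V}^{-1}\bm{y}$, where now $\bm{y} \sim N_p(0,\bm{\mathcal{I}}_p)$ and $\bm{V} \sim \mathcal{W}_p(M-1,\bm{\mathcal{I}}_p)$ are independent and the parameters $\bm{\mu}$ and $\bm{\Sigma}$ have been eliminated.

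The decisive step is to identify the law of $\bm{y}^T\bm{V}^{-1}\bm{y}$. Here I would condition on $\bm{y}$ and exploit the orthogonal invariance of the standard Wishart law: for any rotation $\bm{Q}$ the matrix $\bm{Q}\bm{V}\bm{Q}^T$ has the same distribution as $\bm{V}$, so I may rotate $\bm{y}$ onto the first coordinate axis and reduce $\bm{y}^T\bm{V}^{-1}\bm{y}$ to $\|\bm{y}\|_2^2\,(\bm{V}^{-1})_{11}$. A partitioned-matrix argument for the Wishart then shows that $1/(\bm{V}^{-1})_{11} \sim \chi^2_{M-p}$ and is independent of $\|\bm{y}\|_2^2 \sim \chi^2_p$, whence
$$\frac{M-p}{p}\,(\bm{\mu}_0-\bm{\mu})^T\bm{\Sigma}_0^{-1}(\bm{\mu}_0-\bm{\mu}) = \frac{\|\bm{y}\|_2^2/p}{\bigl(1/(\bm{V}^{-1})_{11}\bigr)/(M-p)} \sim F(p,M-p).$$
I expect this reciprocal-of-the-inverse-Wishart-diagonal computation to be the main obstacle, since it is the one place where a genuine multivariate argument rather than bookkeeping is required; the cleanest route is the block-inversion identity $(\bm{V}^{-1})_{11}^{-1} = V_{11} - \bm{V}_{12}\bm{V}_{22}^{-1}\bm{V}_{21}$, whose law is $\chi^2_{M-p}$ by the conditional normality built into the representation $\bm{V} = \sum_{j=1}^{M-1}\bm{g}_j\bm{g}_j^T$ with $\bm{g}_j \sim N_p(0,\bm{\mathcal{I}}_p)$.

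Finally I would convert the distributional statement into the stated probability identity. Since $(\bm{\mu}_0-\bm{\mu})^T\bm{\Sigma}_0^{-1}(\bm{\mu}_0-\bm{\mu})$ is distributed as $\tfrac{p}{M-p}F(p,M-p)$, taking $F_{\alpha}(p,M-p)$ to be the upper-$\alpha$ critical value of $F(p,M-p)$, so that $P\bigl(F(p,M-p) \le F_{\alpha}(p,M-p)\bigr) = 1-\alpha$, gives exactly $P\bigl((\bm{\mu}_0-\bm{\mu})^T\bm{\Sigma}_0^{-1}(\bm{\mu}_0-\bm{\mu}) \le \tfrac{p}{M-p}F_{\alpha}(p,M-p)\bigr) = 1-\alpha$, which is the claim. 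The equality is exact rather than asymptotic precisely because every distributional statement above holds exactly for finite $M$ under the normality assumption.
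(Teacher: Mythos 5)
The paper does not actually prove \cref{first_moment}: it is quoted as a classical fact from \cite{multi_sta_book}, and the appendix only uses it. So the comparison here is against the standard literature proof, and your argument is exactly that proof, carried out correctly. The reduction of $(\bm{\mu}_0-\bm{\mu})^{T}\bm{\Sigma}_0^{-1}(\bm{\mu}_0-\bm{\mu})$ to $\bm{y}^{T}\bm{V}^{-1}\bm{y}$ with $\bm{y}\sim N_p(0,\bm{\mathcal{I}}_p)$ and $\bm{V}\sim W_p(M-1,\bm{\mathcal{I}}_p)$ independent, the rotation-invariance step giving $\|\bm{y}\|_2^2\,(\bm{V}^{-1})_{11}$ in distribution, and the block-inversion identity $1/(\bm{V}^{-1})_{11}=V_{11}-\bm{V}_{12}\bm{V}_{22}^{-1}\bm{V}_{21}\sim\chi^2_{(M-1)-(p-1)}=\chi^2_{M-p}$, independent of $\|\bm{y}\|_2^2\sim\chi^2_p$, combine to give precisely the ratio-of-independent-chi-squares representation of $F(p,M-p)$, hence the exact finite-sample statement. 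Two convention points you resolved are worth stressing, since the lemma as stated is silent on both. First, the constant $\tfrac{p}{M-p}$ is correct only when $\bm{\Sigma}_0$ is the maximum-likelihood covariance (divisor $M$); with the unbiased divisor $M-1$ the constant would be $\tfrac{p(M-1)}{M(M-p)}$. Your choice is the one consistent with the paper, whose \cref{wishart} asserts $M\bm{\Sigma}_0\sim W_p(M-1,\bm{\Sigma})$, i.e., $M\bm{\Sigma}_0$ is the scatter matrix. Second, the paper's phrase ``$\alpha$ quantile of $F(p,M-p)$'' must be read as the upper-$\alpha$ critical value, $P\bigl(F\le F_{\alpha}\bigr)=1-\alpha$, for the displayed equality to hold, exactly as you observe. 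With those readings fixed, your proposal is a complete and correct self-contained proof of a statement the paper leaves to a citation.
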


\begin{lemma}[see \cite{multi_sta_book} Proposition 7.1]
\label{wishart}
For $\{\bm{\xi}_i\}_{i=1}^M$  i.i.d $N_p(\bm{\mu},\bm{\Sigma})$,
$$M\bm{\Sigma}_0 \sim  W_p(M-1,\bm{\Sigma}),$$
where $W_p(M-1,\bm{\Sigma})$ is Wishart distribution.
\end{lemma}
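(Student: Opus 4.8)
The plan is to establish this through an orthogonal (Helmert-type) transformation of the sample that decouples the sample mean from the residual deviations, thereby reducing the number of degrees of freedom from $M$ to $M-1$. First I would recall the definition of the Wishart distribution: $W_p(n,\bm{\Sigma})$ is the law of $\sum_{j=1}^n \bm{z}_j \bm{z}_j^{T}$ for i.i.d.\ $\bm{z}_j \sim N_p(\bm{0},\bm{\Sigma})$. Writing the sample covariance matrix as $\bm{\Sigma}_0 = \frac{1}{M}\sum_{i=1}^M (\bm{\xi}_i-\bm{\mu}_0)(\bm{\xi}_i-\bm{\mu}_0)^{T}$ with $\bm{\mu}_0 = \frac{1}{M}\sum_{i=1}^M \bm{\xi}_i$, the goal is to exhibit $M\bm{\Sigma}_0$ as a sum of $M-1$ independent outer products of centered Gaussian vectors with common covariance $\bm{\Sigma}$.

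Next I would introduce an $M\times M$ orthogonal matrix $\bm{H}=(H_{ji})$ whose first row is $\frac{1}{\sqrt{M}}(1,\ldots,1)$, and define the transformed vectors $\bm{y}_j = \sum_{i=1}^M H_{ji}\bm{\xi}_i$ for $1\le j\le M$. Using orthonormality of the rows and columns of $\bm{H}$ together with the joint normality of the $\bm{\xi}_i$, I would verify that the $\bm{y}_j$ are mutually independent, each with covariance $\bm{\Sigma}$; that $\bm{y}_1=\sqrt{M}\,\bm{\mu}_0$ has mean $\sqrt{M}\,\bm{\mu}$; and that $E[\bm{y}_j]=\bm{0}$ for all $j\ge 2$, since every row after the first is orthogonal to the constant first row and hence its entries sum to zero. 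The pivotal algebraic identity is that orthogonality of $\bm{H}$ preserves the total second-moment matrix, $\sum_{i=1}^M \bm{\xi}_i\bm{\xi}_i^{T}=\sum_{j=1}^M \bm{y}_j\bm{y}_j^{T}$; subtracting $\bm{y}_1\bm{y}_1^{T}=M\bm{\mu}_0\bm{\mu}_0^{T}$ then gives $M\bm{\Sigma}_0=\sum_{i=1}^M\bm{\xi}_i\bm{\xi}_i^{T}-M\bm{\mu}_0\bm{\mu}_0^{T}=\sum_{j=2}^M \bm{y}_j\bm{y}_j^{T}$.

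Since $\bm{y}_2,\ldots,\bm{y}_M$ form a collection of $M-1$ i.i.d.\ $N_p(\bm{0},\bm{\Sigma})$ vectors, the definition of the Wishart distribution immediately yields $M\bm{\Sigma}_0 \sim W_p(M-1,\bm{\Sigma})$. The step I expect to require the most care -- and the conceptual heart of the argument -- is the construction of $\bm{H}$ and the verification that the later transformed vectors $\bm{y}_2,\ldots,\bm{y}_M$ are genuinely centered and mutually independent; this is precisely where the single degree of freedom lost to estimating $\bm{\mu}$ by $\bm{\mu}_0$ is accounted for. The remaining manipulations are routine bookkeeping with orthonormal rows, and joint normality makes the passage from uncorrelatedness to independence automatic.
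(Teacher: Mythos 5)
Your proof is correct: the Helmert-type orthogonal transformation, the identity $\sum_i \bm{\xi}_i\bm{\xi}_i^{T}=\sum_j \bm{y}_j\bm{y}_j^{T}$, and the observation that $\bm{y}_1=\sqrt{M}\bm{\mu}_0$ absorbs the mean so that $M\bm{\Sigma}_0=\sum_{j=2}^{M}\bm{y}_j\bm{y}_j^{T}$ is a sum of $M-1$ i.i.d.\ $N_p(\bm{0},\bm{\Sigma})$ outer products all check out. The paper gives no proof of this lemma, deferring to the cited textbook (\cite{multi_sta_book}, Proposition 7.1), and your argument is exactly that standard reference proof.
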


The joint distribution of eigenvalues and the probability density function of the maximum and minimum eigenvalues of Wishart matrix all have closed form \cite{eigenpdf}. However, it's still difficult to compute the confidence region of the maximum and minimum eigenvalues. Hence, we use the approximation of $P(a \leq \lambda_{\min}(W))$.

\begin{lemma}[see \cite{interval} section 4]\label{approx}
For $\bm{W} \sim W_p(m,\bm{\mathcal{I}})$,
\begin{equation*}
\begin{split}
P(\lambda_{\max}(\bm{W}) < b) \to F_{\beta}(\dfrac{b-\mu_{mp}}{\sigma_{mp}}), \\
P(\lambda_{\min}(\bm{W}) > a) \to F_{\beta}(-\dfrac{a-\mu^-_{mp}}{\sigma^-_{mp}}),
\end{split}
\end{equation*}
where
\begin{equation*}
\begin{split}
\mu_{mp} = (\sqrt{m}+\sqrt{p})^2,
\sigma_{mp} = \sqrt{\mu_{mp}}(\dfrac{1}{\sqrt{p}}+\dfrac{1}{\sqrt{m}})^{\frac{1}{3}}, \\
\mu^-_{mp} = (\sqrt{m}-\sqrt{p})^2,
\sigma^-_{mp} = \sqrt{\mu^-_{mp}} (\dfrac{1}{\sqrt{p}}-\dfrac{1}{\sqrt{m}})^{\frac{1}{3}}.
\end{split}
\end{equation*}
Besides, $F_{\beta}(\cdot)$ is the distribution function of Tracy-Widom distribution $TW_{\beta}$ and $\beta=1$ for real matrices and $\beta = 2$ for complex matrices.
\end{lemma}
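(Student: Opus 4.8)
The plan is to recognize $\bm{W} \sim W_p(m,\bm{\mathcal{I}})$ as a Laguerre $\beta$-ensemble and to invoke soft-edge universality from random matrix theory, so that the extreme eigenvalues, suitably centered and scaled, are governed by the Tracy-Widom law $TW_\beta$. First I would write the joint eigenvalue density of $\bm{W}$, which is proportional to $\prod_{i<j}|\lambda_i-\lambda_j|^{\beta}\prod_i \lambda_i^{\frac{\beta}{2}(m-p+1)-1}e^{-\frac{\beta}{2}\lambda_i}$, with $\beta=1$ for real matrices and $\beta=2$ for complex matrices; this identifies the model as the Laguerre orthogonal/unitary ensemble. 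The global behaviour is the Marchenko-Pastur law, whose support is $[(\sqrt m-\sqrt p)^2,(\sqrt m+\sqrt p)^2]=[\mu^-_{mp},\mu_{mp}]$ (for $m>p$, so that the lower endpoint is a genuine soft edge). Hence $\lambda_{\max}(\bm{W})$ concentrates at the upper soft edge $\mu_{mp}$ and $\lambda_{\min}(\bm{W})$ at the lower soft edge $\mu^-_{mp}$, which fixes the centering constants.

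Next I would pin down the local scaling. Near a soft edge the limiting density vanishes like a square root, which forces Airy-type fluctuations on a scale proportional to the $1/3$ power of the local density slope; a direct computation of the edge geometry then yields exactly $\sigma_{mp}=\sqrt{\mu_{mp}}(1/\sqrt p+1/\sqrt m)^{1/3}$ at the top and $\sigma^-_{mp}=\sqrt{\mu^-_{mp}}(1/\sqrt p-1/\sqrt m)^{1/3}$ at the bottom. With centering and scaling in hand, the heart of the argument is the convergence of the edge-rescaled correlation kernel to the Airy kernel. For $\beta=2$ the ensemble is determinantal with a kernel built from Laguerre polynomials; applying Plancherel-Rotach asymptotics to those polynomials at the soft edge shows the rescaled kernel converges to $K_{\mathrm{Airy}}$, whence $P(\lambda_{\max}(\bm{W})<b)=\det(I-K)\big|_{(b,\infty)}$ converges to $\det(I-K_{\mathrm{Airy}})\big|_{(s,\infty)}=F_2(s)$ with $s=(b-\mu_{mp})/\sigma_{mp}$. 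The smallest eigenvalue is handled identically after reflecting the spectrum about the lower edge, which turns the event $\{\lambda_{\min}>a\}$ into a largest-eigenvalue event and produces the sign in the argument $-(a-\mu^-_{mp})/\sigma^-_{mp}$.

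For the real case $\beta=1$ I would pass to the Pfaffian structure of the Laguerre orthogonal ensemble through skew-orthogonal polynomials and show that the associated $2\times 2$ matrix kernel converges at the soft edge to the Airy matrix kernel, yielding the Tracy-Widom GOE law $F_1$. The main obstacle is precisely this $\beta=1$ analysis: establishing uniform Laguerre-polynomial asymptotics in a neighborhood of the soft edge and controlling the Pfaffian kernel is substantially more delicate than the determinantal $\beta=2$ computation, and it is also where the exact constants $\mu_{mp},\sigma_{mp},\mu^-_{mp},\sigma^-_{mp}$ must be matched. Because this is a standard edge-universality result, in the paper I would simply invoke \cite{interval} rather than reproduce the argument; the sketch above records the route a self-contained proof would follow.
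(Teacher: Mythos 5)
The paper offers no proof of this lemma at all: it is imported directly from \cite{interval}, and your proposal ultimately takes exactly the same route by deferring to that citation. Your accompanying sketch of the soft-edge universality argument (Laguerre joint density, Marchenko--Pastur endpoints $(\sqrt{m}\pm\sqrt{p})^2$ as centering constants, Airy-kernel/Fredholm-determinant analysis for $\beta=2$, Pfaffian kernel for $\beta=1$, with the noted caveat that the lower edge is soft only when $m>p$) is a correct outline of the standard proof and matches the stated constants, but it is supplementary to what the paper itself does.
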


\begin{corollary}
Let $\{\bm{\xi}_i\}_{i=1}^M$ be i.i.d $N_p(\bm{\mu},\bm{\Sigma})$ and
\begin{equation}\label{normal_gamma}
\begin{split}
& \gamma_1 = \frac{p}{M-p} F_{\alpha}(p,M-p), \\
& \gamma_2 = \gamma_1 + \frac{M}{a}.
\end{split}
\end{equation}
Then the constraint
\begin{equation}\label{c1}
(E_P[\bm{\xi}] - \bm{\mu}_0)^T \bm{\Sigma}_0^{-1} (E_P[\bm{\xi}] - \bm{\mu}_0) \leq \gamma_1
\end{equation}
is met with the probability $1-\alpha$. And the constraint
\begin{equation}\label{c2}
E_P[(\bm{\xi-\bm{\mu}_0})(\bm{\xi-\bm{\mu}_0})^T] \preceq \gamma_2 \bm{\Sigma}_0
\end{equation}
is met with the probability $F_{\beta}(-\frac{a-\mu^-_{M-1, p}}{\sigma^-_{M-1, p}})$ approximately.
\end{corollary}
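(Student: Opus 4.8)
The plan is to handle the two constraints separately, since each is governed by a different distributional lemma and each gets its own probabilistic guarantee. For the mean constraint \cref{c1}, I would first note that under the normal model the true distribution $P$ satisfies $E_P[\bm{\xi}] = \bm{\mu}$, so the left-hand side of \cref{c1} equals $(\bm{\mu}_0-\bm{\mu})^T\bm{\Sigma}_0^{-1}(\bm{\mu}_0-\bm{\mu})$ verbatim. With the choice $\gamma_1 = \frac{p}{M-p}F_\alpha(p,M-p)$ from \cref{normal_gamma}, the event in \cref{c1} is exactly the event in \cref{first_moment}, so the Hotelling-$T^2$ test gives probability precisely $1-\alpha$. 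This settles the first claim with no further work.

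For the covariance constraint \cref{c2}, I would start from the second-moment decomposition
\begin{equation*}
E_P[(\bm{\xi}-\bm{\mu}_0)(\bm{\xi}-\bm{\mu}_0)^T] = \bm{\Sigma} + (\bm{\mu}-\bm{\mu}_0)(\bm{\mu}-\bm{\mu}_0)^T,
\end{equation*}
where the cross terms cancel because $E_P[\bm{\xi}-\bm{\mu}]=0$. I would then dominate each summand by a multiple of $\bm{\Sigma}_0$ in the Loewner order. For the rank-one term, the elementary fact that $\bm{d}^T\bm{\Sigma}_0^{-1}\bm{d}\le\gamma_1$ forces $\bm{d}\bm{d}^T\preceq\gamma_1\bm{\Sigma}_0$ (by Cauchy--Schwarz in the $\bm{\Sigma}_0$-inner product, or equivalently a Schur-complement argument) shows that on the event of \cref{c1} we have $(\bm{\mu}-\bm{\mu}_0)(\bm{\mu}-\bm{\mu}_0)^T\preceq\gamma_1\bm{\Sigma}_0$.

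For the $\bm{\Sigma}$ term, I would use \cref{wishart}: since $M\bm{\Sigma}_0\sim W_p(M-1,\bm{\Sigma})$, the congruence-transformed matrix $\bm{W}:=M\,\bm{\Sigma}^{-1/2}\bm{\Sigma}_0\bm{\Sigma}^{-1/2}$ is $W_p(M-1,\bm{\mathcal{I}})$. The event $\lambda_{\min}(\bm{W})>a$ is equivalent to $\bm{W}\succeq a\bm{\mathcal{I}}$, and conjugating by $\bm{\Sigma}^{1/2}$ gives $M\bm{\Sigma}_0\succeq a\bm{\Sigma}$, i.e. $\bm{\Sigma}\preceq\frac{M}{a}\bm{\Sigma}_0$. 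Adding the two Loewner bounds yields
\begin{equation*}
E_P[(\bm{\xi}-\bm{\mu}_0)(\bm{\xi}-\bm{\mu}_0)^T] \preceq \Bigl(\gamma_1+\tfrac{M}{a}\Bigr)\bm{\Sigma}_0 = \gamma_2\bm{\Sigma}_0,
\end{equation*}
which is \cref{c2}. Applying \cref{approx} with $m=M-1$ then reads off $P(\lambda_{\min}(\bm{W})>a)\to F_\beta\bigl(-\frac{a-\mu^-_{M-1,p}}{\sigma^-_{M-1,p}}\bigr)$, the stated (approximate) probability.

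The main obstacle lies in the covariance part and is twofold. First, \cref{c2} genuinely couples the mean and covariance estimation errors through the rank-one term, so the eigenvalue bound alone does not deliver it; the probability attributed to \cref{c2} is really the probability of the eigenvalue event, with the rank-one contribution absorbed using the already-established bound from \cref{c1}. Second, the qualifier \emph{approximately} is unavoidable: \cref{approx} supplies only an asymptotic Tracy--Widom approximation to $P(\lambda_{\min}(\bm{W})>a)$, so the guarantee for \cref{c2} inherits this approximation rather than being exact, in contrast to the exact $1-\alpha$ obtained for \cref{c1}.
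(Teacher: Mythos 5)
Your proposal is correct and follows essentially the same route as the paper's proof: the Hotelling-$T^2$ lemma gives \cref{c1} exactly, and \cref{c2} is obtained by combining the Wishart/Tracy--Widom bound $\bm{\Sigma}\preceq\frac{M}{a}\bm{\Sigma}_0$ with the rank-one bound $(\bm{\mu}-\bm{\mu}_0)(\bm{\mu}-\bm{\mu}_0)^T\preceq\gamma_1\bm{\Sigma}_0$ implied by \cref{c1}, exactly as in the paper's chained Loewner inequality. Your explicit decomposition and your remark that the stated probability for \cref{c2} is really that of the eigenvalue event (conditional on the \cref{c1} bound being absorbed) are just a more candid rendering of the same argument.
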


\begin{proof}
For the first part of the corollary, the proof follows from \cref{first_moment} directly. We only need to proof the remainder of the corollary.
According to \cref{wishart},
$$M\bm{\Sigma}_0 \sim  W_p(M-1,\bm{\Sigma}).$$
Hence,
$$\bm{W} = M \bm{\Sigma}^{-\frac{1}{2}} \bm{\Sigma}_0 \bm{\Sigma}^{-\frac{1}{2}} \sim W_p(M-1,\bm{\mathcal{I}}).$$
According to \cref{approx},
$$P(\lambda_{\min}(\bm{W}) > a) \to F_{\beta}(-\dfrac{a-\mu^-_{M-1,p}}{\sigma^-_{M-1,p}}).$$
Hence,
\begin{equation*}
P(\bm{\Sigma} \preceq \dfrac{M}{a}\bm{\Sigma}_0) = P(a \bm{\mathcal{I}} \preceq \bm{W}) = P(\lambda_{\min}(\bm{W}) \geq a)
\approx F_{\beta}(-\dfrac{a-\mu^-_{M-1, p}}{\sigma^-_{M-1, p}}).
\end{equation*}
In addition,
\begin{equation*}
\begin{split}
 \dfrac{M}{a}\bm{\Sigma}_0 \succeq \bm{\Sigma} & = E_P[\bm{\xi}\bm{\xi}^T] - E_P[\bm{\xi}] E_P[\bm{\xi}]^T \\
 & \succeq E_P[(\bm{\xi-\bm{\mu}_0})(\bm{\xi-\bm{\mu}_0})^T] - \gamma_1 \bm{\Sigma}_0.
\end{split}
\end{equation*}
Therefore, \cref{c2} is met with the probability greater than $F_{\beta}(-\frac{a-\mu^-_{M-1, p}}{\sigma^-_{M-1, p}})$ approximately.
\end{proof}

Let $a = \mu^-_{M-1, p} -\sigma^-_{M-1, p}$ and $\beta =1$, then $F_1(1) \approx 95\%$.

\bibliographystyle{siamplain}
\bibliography{ref}
\end{document}